\documentclass[12pt]{amsart}
\usepackage{amsmath, amssymb, latexsym, amsthm, amscd, stmaryrd, amsfonts, mathrsfs}
\usepackage[title, titletoc, toc]{appendix}
\usepackage[all]{xy}
\usepackage{soul}

\setlength{\hoffset}{0pt}
\setlength{\voffset}{0pt}
\setlength{\topmargin}{0pt}
\setlength{\oddsidemargin}{0in}
\setlength{\evensidemargin}{0in}
\setlength{\textheight}{8.75in}
\setlength{\textwidth}{6.5in}
\pagestyle{headings}

\theoremstyle{definition}
\newtheorem{Def}[subsubsection]{Definition}
\newtheorem{rem}[subsubsection]{Remark}
\theoremstyle{plain}
\newtheorem{prop}[subsubsection]{Proposition}
\newtheorem{thm}[subsubsection]{Theorem}
\newtheorem{lem}[subsubsection]{Lemma}
\newtheorem{cor}[subsubsection]{Corollary}

\newcommand{\mbf}{\mathbf}
\newcommand{\mfk}{\mathfrak}
\newcommand{\mscr}{\mathscr}
\newcommand{\mcal}{\mathcal}
\newcommand{\mbb}{\mathbb}
\newcommand{\mrm}{\mathrm}
\newcommand{\A}{\mathfrak a}
\newcommand{\Aa}{\mathfrak a}
\newcommand{\B}{\mathfrak b}
\newcommand{\C}{\mathfrak c}
\newcommand{\D}{\mathfrak  d}

\newcommand{\Hd}{\mbf H_d}

\newcommand{\z}{\mathfrak z}

\newcommand{\md}{\mathfrak d}

\newcommand{\m}{\mathfrak m}

\newcommand{\ro}{\mrm{ro}}
\newcommand{\co}{\mrm{co}}

\newcommand{\wt}{\widetilde}

\newcommand{\U}{\mbf U}

\newcommand{\End}{\mrm{End}}

\usepackage{color}
\newcommand{\nc}{\newcommand}
\nc{\redtext}[1]{\textcolor{red}{#1}}
\nc{\bluetext}[1]{\textcolor{blue}{#1}}
\nc{\greentext}[1]{\textcolor{green}{#1}}
\nc{\yl}[1]{\redtext{From yq: #1}}
\nc{\zb}[1]{\redtext{From zb: #1}}

\title[Geometric Schur Duality of two parameter quantum group of type A]{Geometric Schur Duality of two parameter quantum group of type A}

\author{Haitao Ma}
\author{Zongzhu Lin}
\author{Zhu-Jun Zheng}
\address{1. Department of Mathematics, South China University of Technology,  Guangzhou,  China 510641
        }
\email{Zhengzj@scut.edu.cn}
\address{2. Department of Mathematics, Kansas State University, Mahattan, Kansas 66506
        }
\email{zlin@math.ksu.edu}

\date{}
\keywords{Iwahori-Hecke algebra of type $\mbf A$,  flag variety of type $\mbf A$,  Schur-type duality}
\subjclass{17B37,  14L35,  20G43}

\begin{document}

\begin{abstract}
In this paper, we give an geometric description of the Schur-Weyl duality for two-parameter quantum algebras $U_{v, t}(gl_n)$,  where $U_{v, t}(gl_n)$ is the deformation of $U_v(I, \cdot)$,  the classic Shur-Weyl duality $(U_{r, s}(gl_n), V^{\otimes d}, H_d(r, s))$ can be seen as a corollary of the Shur-Weyl duality $(U_{v, t}(gl_n), V^{\otimes d}, H_d(v, t))$ by using the galois descend approach. we also establish the Shur-Weyl duality between the algebras $\widetilde{U_{v, t}(gl_N)^m}$,  $\widehat{U_{v, t}(gl_N)^m}$ and Heck algebra $H_k(v, t)$.

\end{abstract}

\maketitle

\section{Introduction}
Schur-Weyl duality is a classical method to construct irreducible modules of simple Lie groups out of the fundamental representations \cite{W46},
 The quantum version for the quantum enveloping algebra $U_q(sl_n)$ and the Hecke algebra $H_q(S_m)$ has been one of the pioneering examples [13] in the fervent development of quantum groups.  Two-parameter general linear and special linear quantum groups [21,  8,  4] are certain generalization of the one-parameter Drinfeld-Jimbo quantum groups [7,  12].  The two-parameter quantum groups also had their origin in the quantum inverse scattering method [20] as well as other approaches [14,  6].  So far,  lots of mathematicians had studied the quantum groups and two parameter quantum group.  For example,  geometric Shur-Jimbo duality of type A was studied by Beilinson,  Lusztig and Mcpherson \cite{BLM90}.  And the Shur-like duality of type B/C and D were discovered by
 Bao-Wang \cite{BKLW13}and Fan-Li\cite{FL14}.

 Especially,  Fan and  Li had found another version of two parameter quantum group by the way of perverse sheaves \cite{FL13}.
But the question how the two parameter quantum group $U_{v, t}(gl_n)$ can be seen as the deformation of $U_{v}(gl_n)$ didn't solve in their work.  So it is necessary for us to give the new graded structure on $U_{v}(gl_n)$ such that $U_{v, t}(gl_n)$ can be seen as the deformation of $U_{v}(gl_n)$.

 Fan and Li found two new quantum group $\mathbf{U}$ and $\mathbf{U}^m$,  and  gave the Shur-Weyl duality between them and the Iwahori-hecke algebra of type $D_d$\cite{FL14}.  In our following paper,  similar to the Fan and Li's work,  we will give two new two parameter quantum group $\mathbf{U}_{v, t}$ and $\mathbf{U}^m_{v, t}$.  We can also give the Shur-Weyl duality between them and the two parameter Iwahori-hecke algebra of type $D_d$ through the geometric way.  In order to give the comutiplication of the two new two parameter quantum group $\mathbf{U}_{v, t}$,  $\mathbf{U}^m_{v, t}$ and use the comutiplication structure to give the Shur-Weyl duality algebraically.  That is,
 $$\Delta: \mathbf{U} \rightarrow \widetilde{U_{v, t}(gl_N)^m} \otimes \mathbf{U}, $$
 $$\Delta: \mathbf{U}^m \rightarrow  \widehat{U_{v, t}(gl_N)^m}\otimes \mathbf{U}^m. $$
  So it is reasonable for us to give structure of the new quantum group $\widetilde{U_{v, t}(gl_N)^m}$,  $\widehat{U_{v, t}(gl_N)^m}$ and the Shur-Weyl duality between them and $H_{v, t}(d)$.

  In this work,  at first,  we give a new version of two parameter quantum group $U_{v, t}(gl_n)$,  which is the deformation of $U_{v}(gl_n)$ similar to the approach appear in \cite{FL13}.
 Second,  we would like to give the geometric  realization of  three quantum groups $U_{v, t}(gl_n)$,  $\widetilde{U_{v, t}(gl_N)^m}$,  $\widehat{U_{v, t}(gl_N)^m}$.  At the same time,  we also give the Shur-Weyl duality between algebras $U_{v, t}(gl_n)$,  $\widetilde{U_{v, t}(gl_N)^m}$,   $\widehat{U_{v, t}(gl_N)^m}$ and the
Hecke algebra $H_{v, t}(d)$.  Since the classical two parameter quantum group $U_{r, s}(gl_n)$ is the subalgebra of the new version $U_{v, t}(gl_n)$.  we would like to use the Galois descend approach to understand the two different versions of two parameter quantum groups.  The classical Shur-Weyl duality $(U_{r, s}(gl_n), V^{\otimes d}, H_d(r, s))$ can be seen as a corollary of the Shur-Weyl duality $(U_{v, t}(gl_n), V^{\otimes d}, H_d(v, t))$ by using the galois descend theory.  That is, there exist a Galois group $G$ such that $(U_{v, t}(gl_n)^G, (V^{\otimes d})^{G}, H_d(v, t)^G)$ is also Shur-Weyl duality,  and $U_{v, t}(gl_n)^G \cong (U_{r, s}(gl_n)$,  $H_d(v, t)^G \cong H_d(r, s)$.

\section{Deformation}

\subsection{The algebra $U_{v, t}(gl_n)$}

let  $\Omega = \left(
                             \begin{array}{cccccc}
                               1 & 0 & 0 & \dots & 0 & 0 \\
                               -1 & 1 & 0 & \dots & 0 & 0 \\
                               0 & -1 & 1 & \dots & 0 & 0 \\
                               \vdots & \vdots & \vdots & \vdots & \vdots & \vdots \\
                               0 & 0 & 0 & \dots & 1 & 0 \\
                               0 & 0 & 0 & \dots & -1 & 1 \\
                             \end{array}
                           \right)_{n \times n}
$
is associated to the cartan matrix of type $A_{n}. $
Let $I=\{1, 2, \cdots, n\}$ .
To $\Omega$,  we associate the following three bilinear forms on $\mathbb{Z}^I$.
\begin{eqnarray}
\langle i,  j\rangle &=&
\Omega_{ij},  \quad \hspace{45pt}\forall i,  j\in I.  \label{eq47}\\
\begin{bmatrix} i, j \end{bmatrix}&=& 2\delta_{ij} \Omega_{ii} -\Omega_{ij},  \quad \forall i,  j\in I.  \label{eq48}\\
i\cdot j&=&\langle i,  j\rangle +\langle j, i\rangle,   \quad \forall i,  j\in I.  \label{eq49}
\end{eqnarray}

\begin{Def}

{\it The two-parameter quantum algebra} $U_{v, t}(gl_{n})$ associated to $A_{n - 1}$ is an associative $\mathbb{Q}(v, t)$-algebra with 1 generated by symbols $E_i,  F_i, $ $\forall i\in I$  $A_j^{\pm 1},  B_j^{\pm 1}$, $\forall i\in I' = I \cup \{n\}$ and subject to the following relations.
\begin{eqnarray*}
  (R1)& & A_i^{\pm 1}A^{\pm 1}_j=A^{\pm 1}_jA_i^{\pm 1}, \ \ B^{\pm 1}_iB^{\pm 1}_j=B^{\pm 1}_jB^{\pm 1}_i, \\
     & & A_i^{\pm 1}B^{\pm 1}_j=B^{\pm 1}_jA_i^{\pm 1}, \ \ A_i^{\pm 1}A_i^{\mp 1}=1=B^{\pm 1}_iB^{\mp 1}_i. \\
  (R2)& &A_iE_jA^{-1}_i=v^{\langle i, j\rangle}t^{\langle i, j\rangle}E_j, \ \ B_iE_jB^{-1}_i=v^{-\langle i, j\rangle}t^{\langle i, j\rangle}E_j, \\
     & &A_iF_jA^{-1}_i=v^{-\langle i, j\rangle}t^{- \langle j, i\rangle}F_j, \ \ B_iF_jB^{-1}_i=v^{\langle i, j\rangle}t^{-\langle j, i\rangle}F_j. \\
  (R3)& &E_iF_j-F_j E_i=\delta_{ij}\frac{A_iB_{i+1}-B_iA_{i+1}}{v-v^{-1}}. \\
  (R4)& & \sum_{p+p'=1-2\frac{i\cdot j}{i\cdot i}}(-1)^pt^{-p(p'-2\frac{\langle i, j\rangle}{i\cdot i}+2\frac{\langle j, i\rangle}{i\cdot i})}E_i^{(p')}E_j E_i^{(p)}=0, \quad {\rm if}\ i\not =j, \\
  & &\sum_{p+p'=1-2\frac{i\cdot j}{i\cdot i}}(-1)^pt^{-p(p'-2\frac{\langle i, j\rangle}{i\cdot i}+2\frac{\langle j, i\rangle}{i\cdot i})}F_i^{(p)}F_j F_i^{(p')}=0, \quad {\rm if}\ i\not =j,
\end{eqnarray*}
where $E^{(p)}_i=\frac{E_i^p}{[p]^!_{v_i, t_i}}$. $\langle j,  n\rangle = 0$,
$ \langle n,  j\rangle =\left\{\begin{array}{ll}
-1 & \text{if $j = n-1;$ } \\[.05in]
0
 & \text{else
 }.
 \end{array}   \right.
$,  $j \in I$.
\end{Def}

 The algebra $U_{v, t}(gl_n)$ has a Hopf algebra structure with the comultiplication $\Delta$,  the counit $\varepsilon$ and the antipode $S$ given as follows.

  $$\begin{array}{llll}
   &\Delta(A_i^{\pm 1})=A_i^{\pm 1} \otimes A_i^{\pm 1}, &\Delta(B^{\pm 1}_i)=B^{\pm 1}_i \otimes B^{\pm 1}_i,  &\vspace{4pt}\\
   &\Delta(E_i)=E_i\otimes A_iB_{i+1} + 1 \otimes E_i, & \Delta(F_i)=F_i \otimes 1+ B_iA_{i+1} \otimes F_i ,  & \vspace{4pt}\\
  &\varepsilon(A_i^{\pm 1})=\varepsilon(B'^{\pm 1}_i)=1, & \varepsilon(E_i)=\varepsilon(F_i)=0, & S(A_i^{\pm 1})=A_i^{\mp 1}, \vspace{4pt}\\
  & S(B^{\pm 1}_i)=B^{\mp 1}_i, &
  S(E_i)=-E_iB_{i}A_{i+1}, &S(F_i)=-A_{i}B_{i + 1}F_i.
  \end{array}$$

 The algebra $U_{v, t}(gl_n)$  admits a $\mathbb{Z}^{I'} \times \mathbb{Z}^{I'}$-grading  by  defining the degrees of generators  as follows.
\begin{eqnarray*}
& &deg(E_i)=(i, 0), \quad  \ deg(F_i)=(0, i), \\
\end{eqnarray*}
$$ deg(A_{j})=deg(B_{j}) =\left\{\begin{array}{ll}
(\sum\limits_{k = j}^{n}(-1)^k k, \sum\limits_{k = j}^{n}(-1)^k k)& \text{if j is even,  } \\[.05in]
(\sum\limits_{k = j}^{n}(-1)^{k+1} k, \sum_{k = j}^{n}(-1)^{k+1} k)& \text{if j is odd. }
 \end{array}   \right.
$$

 We can define a bilinear form on $\mathbb{Z}^{I'} \times \mathbb{Z}^{I'}$ by
\begin{eqnarray*}
[ \gamma,  \eta ]'=[\gamma_2,  \eta_2]-[\gamma_1,  \eta_1]
\end{eqnarray*}
for any $\gamma=(\gamma_1,  \gamma_2),  \eta=(\eta_1, \eta_2) \in \mathbb{Z}^{I'} \times \mathbb{Z}^{I'}$.
Then on $U_{v, t}(gl_{n})$,  we can define a new multiplication $''\ast"$ by
\begin{equation}\label{eq63}
  x \ast y=t^{-[ |x|, \ |y|]'}xy,
\end{equation}
for any homogenous elements $x,  y \in U_{v, t}(gl_n)$.
Since $[, ]'$ is a bilinear form,
$(U_{v, t}(gl_n),  *)$ is an associative algebra over $\mathbb{Q}(v, t)$.
We define a multiplication,  denoted by $``*"$,  on $U_{v, t}(gl_n)\otimes U_{v, t}(gl_n)$ by
\begin{equation}\label{eq72}
(x\otimes y)*(x'\otimes y')=x*x' \otimes y*y'.
\end{equation}
This gives a new algebra structure on $U_{v, t}(gl_n)\otimes U_{v, t}(gl_n)$.
  $(U_{v, t}(gl_n),  *)$ has a Hopf algebra structure with the comultiplication $\Delta^*$,  the counit $\varepsilon^*$ and the antipode $S^*$.  The image of generators $E_i,  F_i, A_i$ and $B_i^{-1}$ under the map $\Delta^*$ (resp.  $\varepsilon^*$ and $S^*$) are the same as the ones under the map $\Delta$ (resp.  $\varepsilon$ and $S$) defined above.
\begin{lem}
Under the new multiplication $``\ast"$,   the defining relations of $U_{v, t}(gl_n)$  can be rewritten as follows.
\begin{eqnarray*}
  (R^*1)& & A_i^{\pm 1}\ast A^{\pm 1}_j=A^{\pm 1}_j\ast A_i^{\pm 1}, \ \ B^{\pm 1}_i\ast B^{\pm 1}_j=B^{\pm 1}_j\ast B^{\pm 1}_i, \\
     & & A_i^{\pm 1}\ast B^{\pm 1}_j=B^{\pm 1}_j\ast A_i^{\pm 1}, \ \ A_i^{\pm 1}\ast A_i^{\mp 1}=1=B^{\pm 1}_i\ast B^{\mp 1}_i. \\
  (R^*2)& &A_i\ast E_j\ast A^{-1}_i=v^{\langle i, j\rangle}E_j, \ \ \ \ \ \  B_i\ast E_j\ast B^{-1}_i=v^{-\langle i, j\rangle}E_j, \\
     & &A_i\ast F_j\ast A^{-1}_i=v^{-\langle i, j\rangle}F_j, \ \ \ \ \ B_i\ast F_j\ast B^{-1}_i=v^{\langle i, j\rangle}F_j. \\
  (R^*3)& &E_i\ast F_j-F_j\ast E_i=\delta_{ij}\frac{A_i\ast B_{i+1} - B_{i}\ast A_{i+1}}{v-v^{-1}}, \quad \forall i,  j\in I. \\
  (R^*4)& & \sum_{p+p'=1-a_{ij}}(-1)^p\begin{bmatrix} 1-a_{ij}\\ p \end{bmatrix}_{v}E_i^{*p}\ast E_j\ast E_i^{*p'}=0, \quad {\rm if}\ i\not =j, \\
  & &\sum_{p+p'=1-a_{ij}}(-1)^p\begin{bmatrix} 1-a_{ij}\\ p \end{bmatrix}_{v}F_i^{*p}\ast F_j\ast F_i^{*p'}=0\quad {\rm if}\ i\not =j,
\end{eqnarray*}
 where $a_{ij}=2\frac{i\cdot j}{i\cdot i}$ and $E_i^{*p}=E_i*E_i*\cdots *E_i$ for $p$ copies.  We notice that these relations are the specialization of (R1)-(R4) at $t=1$.
\end{lem}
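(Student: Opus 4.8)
The plan is to treat $*$ as a graded twist of the original product: since $[\,\cdot,\cdot\,]'$ is bilinear, the scalars $t^{-[\gamma,\eta]'}$ define a $2$-cocycle on the grading group $\mathbb{Z}^{I'}\times\mathbb{Z}^{I'}$, so $*$ is associative (as already observed) and preserves the grading. Consequently each defining relation, being a sum of monomials of one common degree, can be rewritten term by term: replacing every ordinary product by a $*$-product multiplies the monomial by an explicit power of $t$ read off from the internal degrees of its factors, and the whole proof reduces to checking that these $t$-prefactors cancel the $t$'s displayed in (R1)--(R4), leaving the relations one obtains by setting $t=1$. The arithmetic inputs I need are $i\cdot i=2\Omega_{ii}=2$, $\langle i,j\rangle=\Omega_{ij}$, $[i,j]=2\delta_{ij}-\Omega_{ij}$, the fact that $\deg A_j$ and $\deg B_j$ have equal first and second components, say $\deg A_j=(\sigma_j,\sigma_j)$, and the alternating shape of $\sigma_j$ given by the displayed degree formula.

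First I would dispose of (R1) and (R3). For any two group-like generators among $A_i,B_i$ the form vanishes, since $[(\gamma,\gamma),(\eta,\eta)]'=[\gamma,\eta]-[\gamma,\eta]=0$; likewise $[\deg E_i,\deg F_j]'=[0,j]-[i,0]=0$ and $[\deg F_j,\deg E_i]'=0$. Hence $*$ agrees with the ordinary product on every monomial occurring in (R1) and (R3), and since neither relation contains $t$, the rewritten forms (R$^*$1) and (R$^*$3) are identical to (R1) and (R3), which is exactly their $t=1$ specialization.

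Next, (R2) is a conjugation computation. Using associativity and $[\deg A_i,\deg A_i]'=0$, the $t$-prefactor of $A_i*E_j*A_i^{-1}$ collapses to $[\deg E_j,\deg A_i]'-[\deg A_i,\deg E_j]'$, which by the alternating form of $\sigma_i$ equals $\sum_{k\ge i}(-1)^{k-i}(\Omega_{jk}-\Omega_{kj})$. A short telescoping of this alternating sum gives $-\Omega_{ij}=-\langle i,j\rangle$, exactly cancelling the factor $t^{\langle i,j\rangle}$ in (R2) and producing (R$^*$2). The three remaining lines of (R2) (the $B$-conjugation and the two $F$-conjugations) are the same computation with the two grading factors, and the roles of $\langle i,j\rangle$ and $\langle j,i\rangle$, interchanged, so each displayed power of $t$ is absorbed in the same way.

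The main obstacle is (R4). Here I would first record $[\deg E_i,\deg E_i]'=-[i,i]=-1$, which yields $E_i^{*p}=t^{\binom{p}{2}}E_i^{\,p}$. Multiplying (R4) by the nonzero scalar $[1-a_{ij}]!_{v_i,t_i}$ turns the two divided powers into the two-parameter Gaussian binomial $\begin{bmatrix}1-a_{ij}\\ p\end{bmatrix}_{v_i,t_i}$, and rewriting $E_i^{\,p'}E_jE_i^{\,p}$ as $E_i^{*p'}*E_j*E_i^{*p}$ contributes the prefactor with exponent $pp'[i,i]+p[j,i]+p'[i,j]+\binom{p}{2}+\binom{p'}{2}$. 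The crux is to show that this exponent, together with the explicit exponent $-p\big(p'-2\tfrac{\langle i,j\rangle}{i\cdot i}+2\tfrac{\langle j,i\rangle}{i\cdot i}\big)$ and the $t$-part of $\begin{bmatrix}1-a_{ij}\\ p\end{bmatrix}_{v_i,t_i}$, sums to zero for every $p$, so that only the ordinary $v$-binomial $\begin{bmatrix}1-a_{ij}\\ p\end{bmatrix}_{v}$ survives (note $v_i=v$ since $i\cdot i=2$). Substituting $[i,i]=1$, $[i,j]=-\Omega_{ij}$, $[j,i]=-\Omega_{ji}$ and $p+p'=1-a_{ij}=:s$, the first two $t$-sources already collapse to $-\binom{s}{2}-pp'+s\,\Omega_{ij}$; I expect the delicate point to be verifying that the $t$-exponent of the two-parameter factorial $[m]_{v_i,t_i}$, and hence of the Gaussian binomial, is exactly the quadratic expression $\binom{s}{2}+pp'-s\,\Omega_{ij}$ that kills the residual $pp'$-term. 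Once this identity is checked, every term of (R4) acquires the coefficient $(-1)^p\begin{bmatrix}1-a_{ij}\\ p\end{bmatrix}_{v}$, giving (R$^*$4); and since this is visibly what (R4) becomes at $t=1$, the statement follows.
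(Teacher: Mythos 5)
Your proposal is correct in outline and is in fact more self-contained than the paper's own proof, but the two diverge on the hard relations. What the paper actually does is: prove (R$^*$2) in detail --- by exactly your computation, namely reducing $A_i\ast E_j\ast A_i^{-1}$ to the quantity $[\,|E_j|,|A_i|\,]'-[\,|A_i|,|E_j|\,]'$ and checking case by case (your telescoping of $\sum_{k\geq i}(-1)^{k-i}(\Omega_{jk}-\Omega_{kj})$) that it equals $-\langle i,j\rangle$ --- leave (R$^*$1) implicit, and dispose of (R$^*$3) and (R$^*$4) by citing \cite[4.2]{FL13} (``same as the type $A$ case''). Your cocycle-twist framing, and in particular the observation that the twist vanishes identically on every monomial of (R1) and (R3) (diagonal degrees for the group-like generators, and $[\,|E_i|,|F_j|\,]'=[\,|F_j|,|E_i|\,]'=0$), is a clean and correct replacement for that citation; this is the main thing your route buys over the paper's.

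For (R$^*$4), your twist bookkeeping is right --- the explicit $t$-power of (R4) together with the twist exponents does collapse to $-\binom{s}{2}-pp'+s\,\Omega_{ij}$ per term --- but the identity you say remains to be verified is not the right one, and is in fact false: the $t$-exponent of $\begin{bmatrix} s\\ p\end{bmatrix}_{v_i,t_i}$ cannot equal $\binom{s}{2}+pp'-s\,\Omega_{ij}$, since for $j=i+1$ and $j=i-1$ the binomial is literally the same (it depends on $j$ only through $s=2$) while $\Omega_{ij}$ takes the values $0$ and $-1$. The correct input, with the convention of \cite{FL13}, is $[n]_{v_i,t_i}=t^{n-1}[n]_v$ --- a convention one can also extract from this paper's own convolution formulas for $E_i^2E_{i+1}$, $E_iE_{i+1}E_i$, $E_{i+1}E_i^2$ in Proposition \ref{prop3}, which force $[2]_{v,t}=t(v+v^{-1})$ --- whence $[s]^!_{v,t}=t^{\binom{s}{2}}[s]^!_v$ and $\begin{bmatrix} s\\ p\end{bmatrix}_{v,t}=t^{pp'}\begin{bmatrix} s\\ p\end{bmatrix}_v$. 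So the binomial cancels only the $p$-dependent factor $t^{-pp'}$; the leftover $t^{-\binom{s}{2}+s\Omega_{ij}}$ is independent of $p$, hence factors out of the entire sum as a nonzero scalar, and (R$^*$4) still follows --- just not term-by-term with exponent zero as you anticipated. With that one adjustment (which genuinely requires importing the definition of $[p]^!_{v_i,t_i}$ from \cite{FL13}, since the paper never defines it), your argument is complete.
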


\begin{proof}
The relation  R3, R4 agrees with the one in  ~\cite[4. 2]{FL13},  whose  proof is also the same as the one for type-$\mbf A$ case.  Next we show $R^*2$.
$$A_i\ast E_j = t^{-[ |A_i|, \ |E_j|]'}A_iE_j = t^{-[ |A_i|, \ |E_j|]'}v^{\langle i, j\rangle}t^{\langle i, j\rangle}E_jA_i = t^{[ |E_j|, \|A_i| ]'-[ |A_i|, \ |E_j|]'}v^{\langle i, j\rangle}t^{\langle i, j\rangle}E_j\ast A_i$$
and
$$[ |E_j|, \|A_i| ]'-[ |A_i|, \ |E_j|]' = (\langle j, i \rangle- \langle i, j \rangle)-(\langle j, i+1 \rangle- \langle i+1, j \rangle) + \cdots +(-1)^{n-i}(\langle j, n \rangle - \langle n, j \rangle). $$

\begin{equation*}
[ |E_j|, \|A_i| ]'-[ |A_i|, \ |E_j|]=\left\{\begin{array}{ll}
 = \langle i+1,  j \rangle = 1 = - \langle i, j \rangle & {\rm if}\ i = j, \vspace{5pt}\\
 = (\langle j, i \rangle- \langle i, j \rangle) - (\langle j, i+1 \rangle- \langle i+1, j \rangle) = 1 = - \langle i, j \rangle & {\rm if} \ i = j + 1,  \vspace{5pt}\\
 =  0 = - \langle i, j \rangle & {\rm if}\ i - j > 1,  \vspace{5pt}\\
= \langle j, i \rangle- \langle i + 1, j \rangle  = 0 = - \langle i, j \rangle & {\rm if}\ j = i + 1, \vspace{5pt}\\
= -\langle j - 1, j \rangle +  \langle j, j-1 \rangle  = 0 = - \langle i, j \rangle  & {\rm if}\ j - i > 1.
  \end{array}\right.
\end{equation*}

Therefore,
$$A_i\ast E_j = v^{\langle i, j\rangle}E_j\ast A_i. $$

All other identity in R2 can be shown similarly.
\end{proof}

  The one-parameter quantum algebra $U_{v}(I,  \cdot)$ associated to  $(I,  \cdot)$ is defined as the associative $\mathbb{Q}(v)$-algebra with 1 generated by symbols $E_i,  F_i,  A_i^{\pm 1}$,  $ B_i^{\pm 1},  \forall i\in I$ and  subject to relations (R*1)-(R*4).
  $U_{v}(I,  \cdot)$ has a Hopf algebra structure with the comultiplication $\Delta_1$,  the counit $\varepsilon_1$ and the antipode $S_1$.   The image of generators $E_i,  F_i, A_a,  B_a$ under the map $\Delta_1$ (resp.  $\varepsilon_1$ and $S_1$) are the same as the ones under the map $\Delta$ (resp.  $\varepsilon$ and $S$) defined above.

Let $U_{v, t}(I,  \cdot):=U_{v}(I,  \cdot)\otimes_{\mathbb{Q}(v)}\mathbb{Q}(v, t)$.  The Hopf algebra structure on $U_{v}(I,  \cdot)$ can be naturally extended to $U_{v, t}(I,  \cdot)$.
From the above analysis,  we have the following theorem.
\begin{thm}\label{thm14}
  If $(I,  \cdot)$ is the Cartan datum associated to $\Omega_{n}$,  then there is a Hopf-algebra isomorphism
 $$(U_{v, t}(gl_n),  \ast,  \Delta^*,  \varepsilon^*,  S^*)\simeq (U_{v, t}(I, \cdot),  \cdot,  \Delta_1,  \varepsilon_1,  S_1), $$
 sending the generators in $U_{v, t}$ to the respective generators in $U_{v, t}(I, \cdot)$.
\end{thm}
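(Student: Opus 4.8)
The plan is to recognize the passage from the ordinary product of $U_{v,t}(gl_n)$ to the product $\ast$ as a \emph{$2$-cocycle twist} of the $\mathbb{Z}^{I'}\times\mathbb{Z}^{I'}$-graded algebra $U_{v,t}(gl_n)$, and to exploit that such a twist is invertible. Both $U_{v,t}(gl_n)$ and $U_{v,t}(I,\cdot)$ are presented over $\mathbb{Q}(v,t)$ by the same generators $E_i,F_i,A_j^{\pm1},B_j^{\pm1}$, the first being cut out by (R1)--(R4) and the second by (R*1)--(R*4). The preceding Lemma shows that $(U_{v,t}(gl_n),\ast)$ satisfies (R*1)--(R*4); since $U_{v,t}(I,\cdot)$ is by definition the universal $\mathbb{Q}(v,t)$-algebra on these generators subject to (R*1)--(R*4), the universal property produces an algebra homomorphism
$$\Phi\colon U_{v,t}(I,\cdot)\longrightarrow (U_{v,t}(gl_n),\ast)$$
sending each generator to the generator of the same name. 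As generators map onto generators, $\Phi$ is surjective, and the whole theorem reduces to the injectivity of $\Phi$ together with compatibility with the Hopf structures.

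For the injectivity I would first set up the twisting formalism. For an abelian group $G$ and a $\mathbb{Z}$-bilinear form $B$ on $G$, the assignment $\sigma(g,h)=t^{-B(g,h)}$ is a $2$-cocycle, the cocycle identity being immediate from bilinearity (this is exactly what the text uses to see that $\ast$ is associative). Hence for any $G$-graded algebra $R$ the rule $x\ast y=t^{-B(|x|,|y|)}xy$ on homogeneous elements defines a $G$-graded algebra $R^{\sigma}$ with the \emph{same} underlying graded vector space and unit; twisting $R^{\sigma}$ by the inverse cocycle returns $R$; and any degree-preserving homomorphism remains one after twisting source and target by the same $\sigma$. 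The one genuine point to verify is that the relations (R1)--(R4), and hence the ideals they generate, are homogeneous for the chosen grading, so that the twist descends to the quotient; this is precisely what the prescribed degrees of the $A_j,B_j$ are arranged to secure, for instance both sides of (R3) are homogeneous of degree $(i,i)$. The same homogeneity for (R*1)--(R*4) equips $U_{v,t}(I,\cdot)$ with the matching grading, so that $\Phi$ is degree-preserving.

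With this in hand I would construct an inverse. Endow $U_{v,t}(I,\cdot)$ with the opposite twist $x\star y=t^{+[|x|,|y|]'}xy$; running the computation of the Lemma in reverse shows that $(U_{v,t}(I,\cdot),\star)$ satisfies (R1)--(R4), so the universal property of $U_{v,t}(gl_n)$ yields a surjection $\Psi\colon U_{v,t}(gl_n)\to(U_{v,t}(I,\cdot),\star)$ that is the identity on generators. Applying the opposite twist to $\Phi$ turns it into a degree-preserving homomorphism $(U_{v,t}(I,\cdot),\star)\to U_{v,t}(gl_n)$. Then $\Psi\circ\Phi$ and $\Phi\circ\Psi$ are algebra endomorphisms of $(U_{v,t}(I,\cdot),\star)$ and of $U_{v,t}(gl_n)$ that fix every generator, hence equal the identity. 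Thus the twisted $\Phi$, and therefore $\Phi$ itself, is bijective, and its inverse is the isomorphism $U_{v,t}(gl_n)\to U_{v,t}(I,\cdot)$ of the statement.

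Finally I would promote this to a Hopf-algebra isomorphism. By construction $\Delta^*,\varepsilon^*,S^*$ and $\Delta_1,\varepsilon_1,S_1$ are given on the generators by the same formulas as $\Delta,\varepsilon,S$, and $\Phi$ is the identity on generators, so the identities $(\Phi\otimes\Phi)\circ\Delta_1=\Delta^*\circ\Phi$, $\varepsilon^*\circ\Phi=\varepsilon_1$ and $S^*\circ\Phi=\Phi\circ S_1$ hold on generators. Since every map in sight is an algebra homomorphism for the relevant $\ast$-twisted products — here one uses that the product on the tensor square is the componentwise one of (\ref{eq72}), so that $\Phi\otimes\Phi$ is again a homomorphism — these identities extend from generators to all of $U_{v,t}(I,\cdot)$. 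The main obstacle is the injectivity of $\Phi$; its crux is the invertibility of the cocycle twist, and the only calculation to do by hand is the homogeneity of the relations that makes this twist well defined on the quotients.
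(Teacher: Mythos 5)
Your proposal is correct and takes essentially the same route as the paper: the paper's entire proof consists of the preceding Lemma (rewriting (R1)--(R4) as (R*1)--(R*4) under $\ast$) followed by the words ``from the above analysis,'' and your cocycle-twist/universal-property argument is precisely the formalization of that analysis. The extra details you supply --- homogeneity of the defining ideals so the twist descends, the inverse twist $\star$ giving the two-sided inverse $\Psi$, and the generator-by-generator check of compatibility with $\Delta^*,\varepsilon^*,S^*$ --- are exactly the steps the paper leaves implicit, so your write-up is a completed version of the paper's proof rather than a different one.
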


\section{A geometric setting}

\subsection{Preliminary}

Let $\mbb F_q$ be a finite field of $q$ elements and of odd characteristic.
 $d$ is a fixed positive integer,

$n$ is a positive integer ,
We fix a  vector space  $\mbb F_q^d$ .   Consider the following sets.
\begin{itemize}
\item The set $\mscr X$ of $n$-step flags $V=( V_i)_{ 0\leq i\leq n}$
          in $\mbb F_q^d$ such that  $V_0 = 0$, $V_i\subseteq V_{i+1}$.

\item The set $\mscr Y$  of complete flags  $F= (F_i)_{0\leq i\leq d} $ in $\mbb F_q^d$
          such that $F_i\subset F_{i+1}$,  $|F_i|=i$ .
\end{itemize}
where we write $|F_i|$ for the dimension of $F_i$.

Let $G=\mrm{GL}(V)$ .
Then $G$ acts naturally on sets $\mscr X$ and $\mscr Y$.  Moreoever,
$G$ acts  transitively on $\mscr Y$.
Let $G$ act diagonally on the product $\mscr X\times \mscr X$ (resp.  $\mscr X\times \mscr Y$ and $\mscr Y\times \mscr Y$).
Set
\begin{equation}\label{eq32}
  \mcal A=\mbb Z[v^{\pm1},  t^{\pm1}].
\end{equation}
Let
\begin{align}
\label{S(X)}
\mcal S_{\mscr X}=\mcal A_G(\mscr X\times\mscr X)
\end{align}
be the set of all $\mcal A$-valued $G$-invariant functions on $\mscr X\times \mscr X$.
Clearly,  the set $\mcal S_{\mscr X}$ is a  free $\mcal A$-module.
Moreover,   $\mcal S_{\mscr X}$ admits  an associative  $\mcal A$-algebra structure `$*$' under a standard  convolution product as discussed in ~\cite[2. 3]{BKLW13}.  In particular,  when $v$ is specialized to $\sqrt q$,  we have
\begin{equation}
  \label{eq30}
  f * g(V,  V')=\sum_{V''\in \mscr X}f(V,  V'')g(V'', V'),  \quad \forall\ V, V'\in \mscr X.
\end{equation}
Similarly,  we define  the free $\mcal A$-modules
\begin{equation}\label{V}
\mcal V=\mcal A_G(\mscr X\times\mscr Y)
\quad
\mbox{and}
\quad
\mcal H_{\mscr Y}=\mcal A_G(\mscr Y\times \mscr Y).
\end{equation}
A similar  convolution product gives an associative algebra structure on $\mcal H_{\mscr Y}$ and
 a left $\mcal S_{\mscr X}$-action   and a right $\mcal H_{\mscr Y}$-action on $\mcal V$.
Moreover,  these two actions commute and hence we have the following $\mcal A$-algebra homomorphisms.
\[
\mcal S_{\mscr X} \to \End_{\mcal H_{\mscr Y}} (\mcal V)
\quad\mbox{and}\quad
\mcal H_{\mscr Y} \to \End_{\mcal S_{\mscr X}} (\mcal V).
\]
Similar to  ~\cite[Theorem 2. 1]{P09},  we have the following double centralizer property.

\begin{lem}
\label{eq34}
$\End_{\mcal H_{\mscr Y}}(\mcal V)\simeq \mcal S_{\mscr X}$ and
$\End_{\mcal S_{\mscr X}}(\mcal V) \simeq \mcal H_{\mscr Y}$,  if
$ n\geq d. $
\end{lem}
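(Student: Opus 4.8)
The plan is to realize the pair $(\mcal S_{\mscr X}, \mcal H_{\mscr Y})$ as an idempotent Morita context and to deduce the double centralizer property from fullness of the idempotent. The decisive use of the hypothesis $n\geq d$ is that the composition $(1^d, 0^{n-d})$ of $d$ into $n$ nonnegative parts is admissible; the $n$-step flags of this type are exactly the complete flags (padded by repetitions at the top), so they form a single $G$-orbit $\mscr Y_0\subseteq\mscr X$ which is $G$-equivariantly isomorphic to $\mscr Y$. Let $e\in\mcal S_{\mscr X}$ be the characteristic function of the diagonal $G$-orbit on $\mscr Y_0\times\mscr Y_0\subseteq\mscr X\times\mscr X$; directly from the convolution formula (\ref{eq30}) and its generic version one checks that $e$ is idempotent (the diagonal orbit functions are the orthogonal idempotents summing to the identity of $\mcal S_{\mscr X}$).

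First I would identify the three geometric objects with the pieces of this context. Restriction of functions along $\mscr Y_0\times\mscr Y_0\cong\mscr Y\times\mscr Y$ gives an $\mcal A$-algebra isomorphism $e*\mcal S_{\mscr X}*e\cong\mcal H_{\mscr Y}$, because when two functions supported on pairs of complete flags are convolved over $\mscr X$ only intermediate terms in $\mscr Y_0$ contribute, so the product matches the convolution on $\mscr Y$. Likewise, restriction along $\mscr X\times\mscr Y_0\cong\mscr X\times\mscr Y$ identifies $\mcal S_{\mscr X}*e$ with $\mcal V$ as an $(\mcal S_{\mscr X},\mcal H_{\mscr Y})$-bimodule, the left $\mcal S_{\mscr X}$-action being left convolution and the right $\mcal H_{\mscr Y}$-action right convolution (the commuting actions already recorded before the statement). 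Thus the lemma becomes the two isomorphisms $\End_{e*\mcal S_{\mscr X}*e}(\mcal S_{\mscr X}*e)\cong\mcal S_{\mscr X}$ and $\End_{\mcal S_{\mscr X}}(\mcal S_{\mscr X}*e)\cong(e*\mcal S_{\mscr X}*e)^{\mrm{op}}$, the standard conclusions of Morita theory for a full idempotent. The residual $\mrm{op}$ in the second isomorphism is absorbed by the anti-involution of $\mcal H_{\mscr Y}$ given by transposing the two flags, which yields $\mcal H_{\mscr Y}^{\mrm{op}}\cong\mcal H_{\mscr Y}$.

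The crux, and the step where $n\geq d$ enters essentially, is to prove that $e$ is full, i.e. $\mcal S_{\mscr X}*e*\mcal S_{\mscr X}=\mcal S_{\mscr X}$. Since the identity of $\mcal S_{\mscr X}$ is the sum of the diagonal orbit functions $e_{V_\lambda,V_\lambda}$ over flag types $\lambda$, it suffices to show each such diagonal function lies in the two-sided ideal generated by $e$. Here I would use that every flag $V_\lambda\in\mscr X$ refines to a complete flag $F\in\mscr Y_0$, which is possible precisely because $n\geq d$ makes complete flags available inside $\mscr X$. Convolving the orbit function of $(V_\lambda,F)$ with $e$ and with the orbit function of $(F,V_\lambda)$ produces $e_{V_\lambda,V_\lambda}$ up to an invertible scalar plus a combination of orbit functions indexed by strictly larger pairs; a triangularity argument with induction on the closure order of orbits (equivalently on the length of the refinement) then yields $e_{V_\lambda,V_\lambda}\in\mcal S_{\mscr X}*e*\mcal S_{\mscr X}$.

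I expect the main obstacle to be this fullness step, and within it the bookkeeping of the triangularity: one must verify that the error terms produced by convolving through $F$ are genuinely supported on higher orbits so that the induction closes, and that the leading scalar is invertible in $\mcal A=\mbb Z[v^{\pm1},t^{\pm1}]$, not merely after specializing $v\mapsto\sqrt q$. A clean way to control these scalars is to first establish the identity after the specialization $v=\sqrt q$, where (\ref{eq30}) is an honest count over $\mbb F_q$ and refinement of flags is surjective, and then to note that both $\mcal S_{\mscr X}*e*\mcal S_{\mscr X}$ and $\mcal S_{\mscr X}$ are free $\mcal A$-modules whose bases match under specialization, so the equality already holds generically. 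Once $e$ is known to be full, the two displayed isomorphisms, and hence the lemma, follow formally from the Morita equivalence induced by $e$.
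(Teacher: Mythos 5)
Your Morita skeleton is half sound: the identifications $e*\mcal S_{\mscr X}*e\cong\mcal H_{\mscr Y}$ and $\mcal S_{\mscr X}*e\cong\mcal V$, the general fact $\End_{R}(Re)\cong(eRe)^{\mrm{op}}$ (which needs no fullness at all), and the flag-transposition anti-involution already give the second isomorphism $\End_{\mcal S_{\mscr X}}(\mcal V)\cong\mcal H_{\mscr Y}$ over $\mcal A$. The genuine gap is the fullness claim $\mcal S_{\mscr X}*e*\mcal S_{\mscr X}=\mcal S_{\mscr X}$, on which your proof of the first isomorphism $\End_{\mcal H_{\mscr Y}}(\mcal V)\cong\mcal S_{\mscr X}$ entirely rests: this is false over $\mcal A=\mbb Z[v^{\pm1},t^{\pm1}]$, which is the ring the lemma is stated over (the remark immediately after the lemma stresses that the point is to adapt \cite{P09} from a field to the ring $\mcal A$). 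The ``invertible scalar'' in your triangularity step is the number of complete flags refining $V_\lambda$ in a fixed pattern, i.e.\ the point count $\prod_i[\lambda_i]_q!$ of the flag variety of the Levi stabilizing $V_\lambda$, and this is not a unit in $\mcal A$. Concretely, take $n=d=2$ and $\lambda=(2,0)$, so $V_\lambda$ is the unique flag of its type: by $G$-invariance, for any $f,g\in\mcal S_{\mscr X}$ one has
\begin{equation*}
(f*e*g)(V_\lambda,V_\lambda)=\sum_{F''\in\mscr Y_0}f(V_\lambda,F'')\,g(F'',V_\lambda)=(q+1)\,f(V_\lambda,F)\,g(F,V_\lambda),
\end{equation*}
so every element of $\mcal S_{\mscr X}*e*\mcal S_{\mscr X}$ has its $e_{(V_\lambda,V_\lambda)}$-coefficient divisible by $v^{2}+1$ up to a unit monomial, and hence the identity of $\mcal S_{\mscr X}$ never lies in this two-sided ideal.

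Your proposed repair --- verify the equality at $v=\sqrt q$ and lift it by freeness --- is fallacious for exactly this reason: an inclusion $M\subseteq N$ of free $\mcal A$-modules that becomes an equality after base change to a field need not be an equality over $\mcal A$ (compare $2\mbb Z\subset\mbb Z$, an equality after tensoring with $\mbb Q$); specializing to a field inverts precisely the obstructions $v^{2}+1,\dots$ you need to control. So your argument proves the lemma only after base change to $\mbb Q(v,t)$, where fullness does hold by semisimplicity, but not in the integral form asserted. To obtain $\End_{\mcal H_{\mscr Y}}(\mcal V)\cong\mcal S_{\mscr X}$ over $\mcal A$ one must avoid inverting these scalars, which is what the proof the paper invokes from \cite{P09} (in the Dipper--James style) does: decompose $\mcal V\cong\oplus_\lambda x_\lambda\mcal H_{\mscr Y}$ as a right $\mcal H_{\mscr Y}$-module, where $x_\lambda$ is the sum of the standard basis elements over the parabolic subgroup of type $\lambda$, identify $\Hom_{\mcal H_{\mscr Y}}(x_\lambda\mcal H_{\mscr Y},x_\mu\mcal H_{\mscr Y})$ with $x_\mu\mcal H_{\mscr Y}\cap\mcal H_{\mscr Y}x_\lambda$, and match its double-coset basis with the orbit basis of the corresponding block of $\mcal S_{\mscr X}$; that basis comparison replaces fullness. (Separately, even over $\mbb Q(v,t)$ your induction is set up backwards: your error terms sit on strictly higher orbits, so the induction must descend from the maximal orbits, and your sketch does not explain how it starts there.)
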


We note that the  result in ~\cite[Theorem 2. 1]{P09} is obtained over  the field $\mbb C$ of complex numbers,
but the  proof can be adapted  to  our setting   over the ring  $\mcal A$.

We shall give a description of the $G$-orbits on  $\mscr X\times \mscr X$,  $\mscr X\times \mscr Y$  and  $\mscr Y\times \mscr Y$.
We start by introducing the following notations associated to a matrix
$M=(m_{ij})_{1\leq i,  j \leq c}$.
\begin{align} \label{ro-co}
\begin{split}
\ro (M) & = \left (\sum_{j=1}^{c}  m_{ij} \right )_{1\leq i\leq c},  \\
\co (M) & = \left (\sum_{i=1}^{c} m_{ij} \right )_{1\leq j \leq c}.
\end{split}
\end{align}
We also write $\ro(M)_i$ and $\co (M)_j$ for the $i$-th and $j$-th component of the row vectors of $\ro (M)$ and $\co(M)$,  respectively.

For any pair $(V,  V')$ of flags in $\mscr X$,  we can assign an $n$ by $n$ matrix whose $(i, j)$-entry equal to
$\dim \frac{V_{i-1}+ V_i\cap V_j'}{V_{i-1} + V_i\cap V_{j-1}'}$.

\begin{align}
G \backslash \mscr X\times \mscr X \simeq  \Theta_d,
\end{align}
where
$\Theta_d$ is the set of all matrices $\Theta_d $ in $\mbox{Mat}_{n \times n} (\mbb N)$ such that
$\sum_{i, j}(\Theta_d)_{i, j} = d$

A similar assignment yields two bijection
\begin{align}\label{eq44}
G \backslash \mscr X\times \mscr Y \simeq \Pi , \\
G \backslash \mscr Y\times \mscr Y \simeq  \Sigma,
\end{align}
where the set $\Pi$ consists of all matrices $B=(b_{ij})$ in $\mrm{Mat}_{n\times d} (\mbb N)$ subject to
\[
 \co (B)_j =1, \  \forall \  j\in [1,  d].
 \]
and $\Sigma$ is the set of all matrices $\sigma \equiv (\sigma_{ij})$ in $\mbox{Mat}_{d\times d} (\mbb N)$ such that
\begin{align*}
 \ro (\sigma)_i = 1,  \quad  \ro (\sigma)_j =1.
\end{align*}
Moreover,  we have
\begin{equation}
 \begin{split}
\# \Sigma = d! \quad {\rm and}\quad
\# \Pi = n^d.
\end{split}
\end{equation}

\section{Calculus of the algebra $\mcal S$  and $\mcal H_{\mscr Y}$}
\label{secS_D}

Recall from the previous section  that $\mcal S_{\mscr X}$ is the convolution algebra on $\mscr X\times \mscr X$
defined in (\ref{S(X)}).
For simplicity,  we shall denote $\mcal S$ instead of $\mcal S_{\mscr X}$.
In this section,  we determine  the generators for $\mcal S$ and the associated  multiplication formula.  We also will $\mcal H_{\mscr Y}$ action  on $\mcal V$.

\subsection{Defining relations of $\mcal S$}

 For any $i\in [1,  n-1]$,  $a\in [1,  n]$,   set
\begin{align}
\begin{split}
E_i (V,  V') &=
\begin{cases}
v^{-|V'_i/V'_{i-1}|}t^{-|V_i/V_{i-1}|},  &\mbox{if}\; V_i\overset{1}{\supset} V_i',  V_j=V_{j'}, \forall j\in [1, n]\backslash \{i\}; \\
0,  &\mbox{otherwise}.
\end{cases}
\\
F_i (V,  V') &=
\begin{cases}
v^{-|V'_{i+1}/V'_i|}t^{|V'_{i+1}/V'_i|},  &\mbox{if}\; V_i\overset{1}{\subset} V_i',  V_j=V_{j'}, \forall j\in [1, n]\backslash \{i\}; \\
0,  &\mbox{otherwise}.
\end{cases}
\\
A_a^{\pm 1} (V,  V') & =
\begin{cases}
v^{\pm |V_a'/V_{a-1}'|}t^{\pm |V_a'/V_{a-1}'|} ,  &\mbox{if}\; V=V';\\
0,  & \mbox{otherwise}.
\end{cases}
\\
B_a^{\pm 1} (V,  V') & =
\begin{cases}
v^{\mp |V_{a}'/V_{a-1}'|}t^{\pm |V_{a}'/V_{a-1}'|} ,  &\mbox{if}\; V=V';\\
0,  & \mbox{otherwise}.
\end{cases}
\end{split}
\end{align}

It is clear that these functions are elements in $\mcal S$.

\begin{prop}\label{prop3}
The functions $E_i,  F_i$,   $A_a^{\pm 1}$ and $B_a^{\pm 1}$  in $\mcal S$,  for any $i\in [1, n-1]$,  $a\in [1, n]$,    satisfy the following relations.
\begin{eqnarray*}
  (R1)& & A_i^{\pm 1}A^{\pm 1}_j=A^{\pm 1}_jA_i^{\pm 1}, \ \ B^{\pm 1}_iB^{\pm 1}_j=B^{\pm 1}_jB^{\pm 1}_i, \\
     & & A_i^{\pm 1}B^{\pm 1}_j=B^{\pm 1}_jA_i^{\pm 1}, \ \ A_i^{\pm 1}A_i^{\mp 1}=1=B^{\pm 1}_iB^{\mp 1}_i. \\
  (R2)& &A_iE_jA^{-1}_i=v^{\langle i, j\rangle}t^{\langle i, j\rangle}E_j, \ \ B_iE_jB^{-1}_i=v^{-\langle i, j\rangle}t^{\langle i, j\rangle}E_j, \\
     & &A_iF_jA^{-1}_i=v^{-\langle i, j\rangle}t^{- \langle j, i\rangle}F_j, \ \ B_iF_jB^{-1}_i=v^{\langle i, j\rangle}t^{-\langle j, i\rangle}F_j. \\
  (R3)& &E_iF_j-F_j E_i=\delta_{ij}\frac{A_iB_{i+1}-B_iA_{i+1}}{v-v^{-1}}. \\
  (R4)& & \sum_{p+p'=1-2\frac{i\cdot j}{i\cdot i}}(-1)^pt^{-p(p'-2\frac{\langle i, j\rangle}{i\cdot i}+2\frac{\langle j, i\rangle}{i\cdot i})}E_i^{(p')}E_j E_i^{(p)}=0, \quad {\rm if}\ i\not =j, \\
  & &\sum_{p+p'=1-2\frac{i\cdot j}{i\cdot i}}(-1)^pt^{-p(p'-2\frac{\langle i, j\rangle}{i\cdot i}+2\frac{\langle j, i\rangle}{i\cdot i})}F_i^{(p)}F_j F_i^{(p')}=0, \quad {\rm if}\ i\not =j, \\
  (R5)&&\prod_{i=1}^n A_i=v^{d}t^{d}, \quad \prod_{i=1}^n B_i=v^{-d}t^{d};\\
  (R6)&& \prod_{l=0}^d(A_j-v^{l}t^{l})=0, \quad \prod_{l=0}^d(B_j-v^{-l}t^{l})=0 \forall j\in [1, n]. \\
  (R7)&& {E_i}^{d+1} = 0,  {F_i}^{d+1} = 0.
\end{eqnarray*}
\end{prop}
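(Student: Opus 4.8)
The plan is to verify every relation by direct computation of convolution products in $\mcal S$, using the explicit formulas for $E_i,F_i,A_a^{\pm1},B_a^{\pm1}$ together with the product (\ref{eq30}) and its $v$-twisted structure constants; it suffices to check each identity after the specialization $v=\sqrt q$ for all prime powers $q$, since the structure constants are Laurent polynomials in $v,t$. I would dispatch the Cartan-type relations first. Each $A_a^{\pm1}$ and $B_a^{\pm1}$ is supported on the diagonal, where convolution reduces to pointwise multiplication of scalar values; writing $a_i(V)=\dim(V_i/V_{i-1})$, the value of $A_i$ at $(V,V)$ is $v^{a_i(V)}t^{a_i(V)}$ and that of $B_i$ is $v^{-a_i(V)}t^{a_i(V)}$. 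Since these are scalars they commute, giving (R1). For (R5) one uses $\sum_i a_i(V)=\dim V_n=d$ for every flag in $\mscr X$, so the diagonal values multiply to $\prod_i A_i=v^d t^d$ and $\prod_i B_i=v^{-d}t^d$. For (R6), on the diagonal component indexed by $V$ the function $A_j$ acts as the scalar $v^{a_j(V)}t^{a_j(V)}$ with $a_j(V)\in\{0,1,\dots,d\}$, so one of the commuting factors of $\prod_{l=0}^d(A_j-v^l t^l)$ annihilates it, and likewise for $B_j$.

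Relation (R2) I would obtain by conjugating the off-diagonal functions $E_j,F_j$ by the diagonal functions $A_i^{\pm1},B_i^{\pm1}$: the convolution $A_i*E_j*A_i^{-1}$ rescales $E_j(V,V')$ by the ratio of the $A_i$-values at $V$ and $V'$, and this ratio is a prescribed monomial in $v,t$ read off from the definitions. The nilpotency relation (R7) is a dimension count: a nonzero contribution to $E_i^{*(d+1)}$ would require a chain of $d+1$ successive codimension-one descents of the $i$-th subspace, all taking place inside $V_i/V_{i-1}$, which has dimension $a_i(V)\le d$; hence $E_i^{*(d+1)}=0$, and the same bound applies to $F_i$.

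The substance of the proposition is (R3). I would compute $E_i*F_j$ and $F_j*E_i$ by summing over the intermediate flag $V''$. The support conditions force $V''$ to agree with both $V$ and $V'$ away from the $i$-th (resp.\ $j$-th) step, so for $i\neq j$ the two products act on disjoint steps of the flag and commute, killing the commutator. For $i=j$ the difference localizes on the diagonal, where each convolution becomes a sum over the hyperplanes of $V_i/V_{i-1}$ (resp.\ the lines of $V_{i+1}/V_i$), producing Gaussian-binomial factors; the remaining task is to check that, after reinstating the parameters, this difference equals the diagonal value of $\frac{A_iB_{i+1}-B_iA_{i+1}}{v-v^{-1}}$. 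I expect this last verification---ensuring that the asymmetric $v$- and $t$-exponents built into the definitions of $E_i,F_i,A_a,B_a$ conspire to produce exactly this two-parameter combination, rather than a neighbouring one---to be the main obstacle, since the $t$-weights are tracked formally and do not arise from any field count.

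Finally, for the quantum Serre relations (R4) I would avoid a direct attack and instead reduce to the one-parameter case. Passing to the rescaled product $\ast$ of (\ref{eq63}) converts (R4) into the $t$-free relations $(R^*4)$ of the preceding Lemma, which are precisely the type-$\mbf A$ Serre relations verified geometrically in ~\cite{FL13}; re-inserting the $t$-twist through (\ref{eq63}) then recovers (R4) in $\mcal S$.
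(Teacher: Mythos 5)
For (R1), (R2), (R5), (R6) and (R7) your arguments are correct and are essentially the paper's own: the paper declares (R1) and (R7) straightforward and proves (R2), (R5), (R6) by exactly the diagonal/support computations you describe. The two substantive relations are (R3) and (R4), and in both places your proposal has a genuine gap. For (R3), after correctly reducing to $i=j$ you explicitly decline to carry out the decisive check that the diagonal value of the commutator equals that of $\frac{A_iB_{i+1}-B_iA_{i+1}}{v-v^{-1}}$; that check is the entire content of the relation, and it is precisely what the paper computes (it writes down $(E_iF_j-F_jE_i)(V,V')$ in closed form and matches it against the right-hand side). Worse, the step you assert without proof --- that for $i=j$ the commutator is supported on the diagonal --- is itself a nontrivial $t$-exponent computation: when $V_i\neq V_i'$ with $\dim V_i=\dim V_i'$ and all other steps equal, each of $E_i*F_i(V,V')$ and $F_i*E_i(V,V')$ consists of exactly one term, with intermediate subspace $V_i\cap V_i'$ and $V_i+V_i'$ respectively, and whether these two monomials in $v,t$ cancel depends on the very bookkeeping of the asymmetric $t$-weights that you defer; nothing about disjointness of supports forces the cancellation.

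For (R4), the proposed reduction to the one-parameter case does not go through as stated. The rescaled product $\ast$ of (\ref{eq63}) is defined on the abstract algebra $U_{v,t}(gl_n)$ through its $\mbb Z^{I'}\times\mbb Z^{I'}$-grading by generator degrees; no such grading is given on $\mcal S$, and it cannot be transported wholesale, since $\mcal S$ satisfies inhomogeneous relations such as (R5) and (R6), so the kernel of the map from the free algebra on the generators to $\mcal S$ is not a homogeneous ideal. One can grade the subalgebra generated by the $E_i$ alone (distinct multidegrees have disjoint supports), but even then the $\ast$-product of the functions $E_i\in\mcal S$ is not the convolution product of the one-parameter functions of \cite{FL13} or \cite{BLM90}: the values of $E_i$ contain powers of $t$ that depend on the flag (the factor $t^{-|V_i/V_{i-1}|}$), and these orbit-dependent factors are not removed by the global twist $t^{-[ |x|,\ |y|]'}$, which depends only on the degrees of $x$ and $y$. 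Hence citing the geometric one-parameter Serre relations does not close the argument: one must still compute the $t$-exponents of $E_i^2E_{i+1}$, $E_iE_{i+1}E_i$ and $E_{i+1}E_i^2$ on their common supporting orbits and verify that the $t$-twisted combination in (R4) vanishes --- which is exactly the direct computation the paper performs.
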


\begin{proof}
The proofs of the  identities of R1, R7 are straightforward.
Let $\lambda'_i=|V'_i/V'_{i-1}|$.
We show the first identity in R2.  we have
\begin{equation*}
(A_iE_{j})(V, V')=\left\{\begin{array}{ll}
 v^{\lambda_i' - \lambda_j' - 1}t^{\lambda_i' + \lambda_j'}& {\rm if}\ V_j\overset{1}{\supset} V_j' {\rm and} \ i=j + 1, \vspace{5pt}\\
 v^{\lambda_i' - \lambda_j' + 1}t^{\lambda_i' + \lambda_j' + 2}& {\rm if}\ V_j\overset{1}{\supset} V_j' {\rm and} \ i=j,  \vspace{5pt}\\
 v^{\lambda_i' - \lambda_j'}t^{\lambda_i' + \lambda_j' + 1} & {\rm if}\ V_j\overset{1}{\supset} V_j' {\rm and} \ i\neq j,  j+1,  \vspace{5pt}\\
 0 & {\rm otherwise}.
  \end{array}\right.
\end{equation*}
\begin{equation*}
(A_iE_{j})(V, V')=\left\{\begin{array}{ll}
 v^{\lambda_i' - \lambda_j'}t^{\lambda_i'+ \lambda_j'+1}& {\rm if}\ V_j\overset{1}{\supset} V_j' , \vspace{5pt}\\
 0 & {\rm otherwise}.
  \end{array}\right.
\end{equation*}

That is,  $A_iE_jA^{-1}_i(V, V')=v^{\langle i, j\rangle}t^{\langle i, j\rangle}E_j(V, V'). $ All other identities can be shown similarly.

we show the identity in R3. By a direct calculation.
We have
\begin{equation*}
(E_iF_{j}-F_jE_i)(V, V')=\left\{\begin{array}{ll}
 \frac{v^{\lambda_i' - \lambda_{i + 1}' }t^{\lambda_i' + \lambda_{i + 1}'}-v^{\lambda_{i+1}'- \lambda_{i}'}t^{\lambda_i' +\lambda_{i + 1}'}}{v-v^{-1}}& {\rm if}\ V=V'{\rm and} i=j, \vspace{5pt}\\
  0&{\rm otherwise}.
  \end{array}\right.
\end{equation*}
It is easy to check that the right hand side is equal to $\delta_{ij}\frac{A_iB_{i+1}-B_iA_{i+1}}{v-v^{-1}}(V, V')$.

At last,
We now show the first identity in R4.
By a direct calculation,  we have
 \begin{equation*}
    E_i^2E_{i + 1}(V, V')=\left\{\begin{array}{l}
      (v^2+1)v^{-2\lambda'_i-\lambda_{i+1}'- 1}t^{2\lambda'_i + \lambda_{i+1}'+ 4} , \quad {\rm if}\  V_i\overset{2}{\supset} V_i' {\rm and}\ V_{i + 1}\overset{1}{\supset} V_{i + 1}',  \\
      0,  \hspace{159pt} {\rm otherwise}.
    \end{array}
  \right.
  \end{equation*}
  \begin{equation*}
    E_iE_{i + 1}E_i(V, V')=\left\{\begin{array}{l}
      (v^2+1)v^{-2\lambda'_i-\lambda_{i+1}'}t^{2\lambda'_i + \lambda_{i+1}' + 3}, \quad {\rm if}\  V_i\overset{2}{\supset} V_i' {\rm and}\ V_{i + 1}\overset{1}{\supset} V_{i + 1}', \\
      0, \hspace{148pt}{\rm otherwise}.
    \end{array}
  \right.
  \end{equation*}
  \begin{equation*}
    E_{i + 1} E_i^2(V, V')=\left\{\begin{array}{ll}
      (v^2+1)v^{-2\lambda'_i-\lambda_{i+1}' + 1}t^{2\lambda'_i + \lambda_{i+1}' + 2},  &{\rm if}\  V_i\overset{2}{\supset} V_i' {\rm and}\ V_{i + 1}\overset{1}{\supset} V_{i + 1}', \\
      0,  & {\rm otherwise}.
    \end{array}
  \right.
\end{equation*}
The first identity in R4 follows.  By the same way,  the other three identities can be shown directly.

Let's prove the first identity in R5,  we have

  \begin{equation*}
  \prod_{i=1}^n A_i(V, V')=\left\{\begin{array}{ll}
     v^{\lambda_1'+ \dots +\lambda_n'} t^{\lambda_1'+ \dots +\lambda_n'},  &{\rm if}\  V = V', \\
      0,  & {\rm otherwise}.
    \end{array}
  \right.
\end{equation*}
Since $\lambda_1'+ \dots +\lambda_n' = d$,  the first identities follows.  The other identities can be shown similarly.

At last,  let's prove the first identity in R6,  we have

  \begin{equation*}
  \prod_{l=0}^d(A_j-v^{l}t^{l})(V,  V')=\left\{\begin{array}{ll}
     (v^{\lambda_j'} t^{\lambda_j'} - 1)(v^{\lambda_j'} t^{\lambda_j'} - vt) \dots (v^{\lambda_j'} t^{\lambda_j'} - v^dt^d),  &{\rm if}\  V = V', \\
      0,  & {\rm otherwise}.
    \end{array}
  \right.
\end{equation*}
Since $0 \leq \lambda_j' \leq d $,  the first identities follows.  the other identities can be shown similarly.
\end{proof}

\subsection{Multiplication formulas in $\mcal S$}\label{sec4. 3}

For any $n\in \mbb Z,   k\in \mbb N$,  set
$$(n)_v=\frac{v^{2n}-1}{v^2-1},
\quad {\rm and}\quad
\begin{pmatrix}
  n\\k
\end{pmatrix}_{\!\!\!{}v}= \prod_{i=1}^k\frac{(n+1-i)_v}{(i)_v}.
$$
Let $E_{ij}$ is the $n\times n$ matrix whose $(i, j)$-entry is 1 and all other entries are 0.
Let $e_{\A }$  be the characteristic function of the $G$-orbit corresponding to $\A\in \theta_d$.
It is clear that  the set $\{e_{\A  }| \A \in \theta_d\}$ forms a basis of $\mcal S$.

We assume that the ground field is an algebraic closure $\overline{\mbb F}_q$ of $\mbb F_q$ when we talk about the dimension of a $G$-orbit or its stabilizer.
Set
\[
d(\A)={\rm dim} \ \mcal O_{\A}
\quad\mbox{and}\quad
r(\A)={\rm dim} \ \mcal O_{\B},  \quad \forall \A\in \theta_d \ {\rm or} \  \Pi,
\]
where $\B=(b_{ij})$ is the diagonal matrix  such that $b_{ii}=\sum_ka_{ik}$.
Denote by ${\rm C}_{G}(V, V')$ the stabilizer of $(V, V')$ in $G$.

\begin{lem}
\label{dimension1}
 If $ \A \in \Pi$ , We have
\begin{equation*}
\begin{split}
{\rm dim}\ {\rm C}_{G}(V, V')
& = \sum_{i\geq k,  j\geq l}a_{ij}a_{kl},
\quad  {\rm if}\ (V, V') \in \mcal O_{\A}, \\
{\rm dim}\ \mcal O_{\A}
&=\sum_{i<k\ {\rm or}\ j<l}a_{ij}a_{kl}, \\
d(\A)-r(\A)
&= \sum_{i\geq k,  j<l}a_{ij}a_{kl}.
\end{split}
\end{equation*}
\end{lem}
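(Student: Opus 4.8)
The plan is to realise both flags simultaneously by a basis adapted to the relative-position matrix $\A=(a_{ij})$, to reduce the computation of $\dim\mrm{C}_G(V,V')$ to counting admissible $\Hom$-components, and then to deduce the last two identities by elementary bookkeeping over index sets.

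First I would fix a pair $(V,V')\in\mcal O_{\A}$ and use the orbit classification $(\ref{eq44})$ to produce a direct sum decomposition (over $\overline{\mbb F}_q$) $\mbb F_q^d=\bigoplus_{i,j}W_{ij}$ with $\dim W_{ij}=a_{ij}$ such that
\[
V_i=\bigoplus_{i'\leq i,\ j}W_{i'j},\qquad V'_j=\bigoplus_{i,\ j'\leq j}W_{ij'} .
\]
The stabiliser $\mrm{C}_G(V,V')$ is an algebraic subgroup of $G=\mrm{GL}(\mbb F_q^d)$ whose Lie algebra is $\{X\in\End(\mbb F_q^d):X(V_i)\subseteq V_i,\ X(V'_j)\subseteq V'_j\ \forall i,j\}$. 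Decomposing $X$ into components $X_{(ab),(kl)}\colon W_{kl}\to W_{ab}$, the condition $X(V_i)\subseteq V_i$ for all $i$ forces $a\leq k$ (the binding case being $i=k$), while $X(V'_j)\subseteq V'_j$ forces $b\leq l$. Summing $\dim\Hom(W_{kl},W_{ab})=a_{kl}a_{ab}$ over the admissible pairs and relabelling (using the symmetry of the double sum under swapping the two index pairs) yields the first identity $\dim\mrm{C}_G(V,V')=\sum_{i\geq k,\ j\geq l}a_{ij}a_{kl}$.

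Next, since $\dim\mcal O_{\A}=\dim G-\dim\mrm{C}_G(V,V')$ and $\dim G=d^2=\bigl(\sum_{i,j}a_{ij}\bigr)^2$ (as $\sum_{i,j}a_{ij}=d$ for $\A\in\Pi$), expanding the square as a sum over \emph{all} pairs $((i,j),(k,l))$ and subtracting the pairs with $i\geq k$ and $j\geq l$ leaves exactly the pairs with $i<k$ or $j<l$; this is the second identity. For the third identity I would apply the second identity to the diagonal matrix $\B$, where only the terms with $i=j$ and $k=l$ survive, so that
\[
r(\A)=\sum_{i<k}b_{ii}b_{kk}=\sum_{i<k}\ \sum_{j,l}a_{ij}a_{kl}.
\]
Splitting the index set $\{i<k\text{ or }j<l\}$ into the disjoint pieces $\{i<k\}$ and $\{i\geq k,\ j<l\}$ then gives $d(\A)=r(\A)+\sum_{i\geq k,\ j<l}a_{ij}a_{kl}$, which is the claimed formula.

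The main obstacle is the passage in the second step from the two flag conditions to the admissibility inequalities $a\leq k$ and $b\leq l$: one must verify that the row and column constraints decouple across the pieces $W_{ij}$ and correctly identify the tightest index in each, since the directions of these inequalities determine the final answer. After this point every remaining manipulation is purely combinatorial.
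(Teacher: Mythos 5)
Your proposal is correct and follows essentially the same route as the paper's proof: both construct the bi-adapted decomposition $\mbb F_q^d=\bigoplus_{i,j}W_{ij}$ (the paper's $Z_{ij}$), characterize the flag-preserving endomorphisms by the componentwise inequalities, obtain $\dim\mcal O_{\A}$ by subtracting from $\dim G$, and get the third identity by specializing to the diagonal matrix $\B$ and splitting the index set $\{i<k\text{ or }j<l\}$ into $\{i<k\}$ and $\{i\geq k,\ j<l\}$. Your write-up is in fact somewhat more careful than the paper's (explicitly deriving $r(\A)=\sum_{i<k}\sum_{j,l}a_{ij}a_{kl}$ from the second identity), but no new idea is involved.
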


\begin{proof}
The proof is similar with \cite{BLM90},  The only difference we consider is that $ \A \in \Pi$ should be the $n \times d$ matrix.  We can find the subspace $Z_{ij}$ of $V$ such that$V_a = \oplus_{i \leq a;j}Z_{ij}$ for all $a$,  $V_{b}' = \oplus_{j \leq b;i}Z_{ij}$ for all $b$.  $V = \oplus_{ij}Z_{ij}$.  Consider $T \in End(V)$,  $T$ is determined by a family of linear maps $T_{ijkl} : Z_{ij} \rightarrow Z_{kl}$.  If $T |_{V_{a}} = V_a,  T |_{V_{b}'} = V_{b}'$,  one can obtain that if $T_{ijkl} \neq 0$,  then $i \geq k,  j \geq l$.  So we have $dim C_G(V, V') = \sum\limits_{i \geq k,  j \geq l}a_{ij}a_{kl}$,  $dim \mcal O_{\A} = dim GL(V) - dim C_G(V, V') = \sum\limits_{i<k\ {\rm or}\ j<l}a_{ij}a_{kl}$.  Since $r(\A) = dim(V,  V)$,  we have $d(\A)-r(\A) =\sum\limits_{i<k\ {\rm or}\ j<l}a_{ij}a_{kl} - \sum\limits_{i<k}a_{ij}a_{kl} =  \sum_{i\geq k,  j<l}a_{ij}a_{kl}. $
\end{proof}

For any $\A\in \theta_{d}, \Pi$,  let
$$\{\A\}=v^{-(d(\A)-r(\A))}t^{(d(\A)-r(\A))}e_{\A}. $$
We define a bar involution `$-$' on $\mcal A$ by $\bar v= v^{-1}$.

\begin{prop}\label{cor5}
Suppose that $\A$,  $\B$,  $\C  \in \Theta_d$,  $h\in [1,  n-1]$ and $r\in \mbb N$.

$(a)$ If ${\rm co}(\B)={\rm ro}(\A)$,
and  $\B-rE_{h, h+1}$  is diagonal,   then we have
\begin{align}\label{eq22}
\{\B\} *  \{\A\}
& =\sum_{t: \sum_{u=1}^n t_u=r} v^{\beta(t)}t^{\alpha(t)}\prod_{u=1}^n
\overline{\begin{pmatrix}a_{hu}+t_u\\ t_u \end{pmatrix}}_{\!\!\!{}v}\
\{\A_{t}\},  \ \mbox{where} \\
\alpha(t)
& = \sum_{j\geq l} a_{hj} t_l + \sum_{j>l} a_{h+1,  j} t_l - \sum_{j<l} t_j t_l
  \nonumber\\
\beta(t)
& = \sum_{j\geq l} a_{hj} t_l - \sum_{j>l} a_{h+1,  j} t_l + \sum_{j<l} t_j t_l
  \nonumber\\
\A_t
& =A+\sum_{u=1}^nt_u(E_{hu}-E_{h+1, u}) \in \theta_{d}.  \nonumber
\end{align}

$(b)$  If  ${\rm co}(\C)={\rm ro}(\A)$and  $\C-rE_{h+1, h}$ is diagonal,   then
\begin{align}\label{eq24}
\{\C\} * \{\A\}
& =\sum_{t: \sum_{u=1}^n t_u=r}v^{\beta'(t)}t^{\alpha'(t)}\prod_{u=1}^n
\overline{\begin{pmatrix} a_{h+1, u}+t_u\\ t_u\end{pmatrix}}_{\!\!\!{}v}
\ \{\A(h,  t)\},  \ \mbox{where}\\
\alpha'(t)
& =\sum_{j\leq l} a_{h+1, j} t_l + \sum_{j<l} a_{hj} t_l - \sum_{j<l} t_j t_l,   \nonumber\\
\beta'(t)
& =\sum_{j\leq l} a_{h+1, j} t_l - \sum_{j<l} a_{hj} t_l + \sum_{j<l} t_j t_l,   \nonumber\\
\A(h,  t)
&=A-\sum_{u=1}^nt_u(E_{hu}-E_{h+1, u}) \in \theta_{ d}.  \nonumber
\end{align}

\end{prop}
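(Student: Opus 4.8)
The plan is to prove part $(a)$ by a direct geometric computation of the convolution product, and to obtain part $(b)$ by the same argument with the roles of the two rows $h$ and $h+1$ reversed. Throughout I work over $\overline{\mbb F}_q$ and use the convolution formula \eqref{eq30}, recognizing the structure constants as polynomials in $q=v^2$. Since $\{\B\}$ and $\{\A\}$ are scalar multiples of the characteristic functions $e_{\B}$ and $e_{\A}$, it suffices to evaluate $(e_{\B}*e_{\A})(V,V')$ at a fixed pair $(V,V')$ lying in a target orbit $\mcal O_{\C}$ and then reinstate the normalization factors hidden in the braces. A structurally important observation is that the convolution \eqref{eq30} has $t$-free structure constants, so that every power of $t$ in the final formula is an artifact of the normalization $\{\cdot\}=v^{-(d(\cdot)-r(\cdot))}t^{(d(\cdot)-r(\cdot))}e_{(\cdot)}$.

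First I would determine which targets $\C$ occur. Because $\B-rE_{h,h+1}$ is diagonal, a flag $V''$ with $(V,V'')\in\mcal O_{\B}$ agrees with $V$ in every step except the $h$-th, where the two differ by an $r$-dimensional modification; one must fix the direction of this inclusion carefully from the matrix convention and the condition $\co(\B)=\ro(\A)$. Imposing in addition $(V'',V')\in\mcal O_{\A}$ forces the target to be $\C=\A_t$ for some tuple $t=(t_u)$ with $\sum_u t_u=r$, where $t_u$ records how many of the $r$ dimensions are absorbed into the $u$-th column. This identifies the index set of the sum in \eqref{eq22}.

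Next I would carry out the counting. Fixing $(V,V')\in\mcal O_{\A_t}$ and using the decomposition $V=\bigoplus_{ij}Z_{ij}$ furnished by Lemma \ref{dimension1}, the admissible $V''$ correspond to choices of a subspace in each column-graded piece around step $h$; counting these subspaces over $\mbb F_q$ produces exactly the product of (bar-conjugate) Gaussian binomial coefficients $\prod_u\overline{\begin{pmatrix}a_{hu}+t_u\\ t_u\end{pmatrix}}_v$ appearing in the statement.

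The last and most delicate step is the exponent bookkeeping, which is where I expect the main obstacle to lie. The power of $t$ equals $(d(\B)-r(\B))+(d(\A)-r(\A))-(d(\A_t)-r(\A_t))$, and I would evaluate it by expanding the closed formula $d(\cdot)-r(\cdot)=\sum_{i\geq k,\,j<l}(\cdot)_{ij}(\cdot)_{kl}$ of Lemma \ref{dimension1} at $\B$, $\A$, and $\A_t=\A+\sum_u t_u(E_{hu}-E_{h+1,u})$, and then collecting the cross terms to recover $\alpha(t)$. The power of $v$ combines this same normalization contribution with the geometric dimension shift carried by the count, and must be matched against $\beta(t)$; as a useful check, specializing $t=1$ should reduce the whole identity to the one-parameter multiplication formula of \cite{BLM90}, so that only the genuinely new $t$-exponent $\alpha(t)$ needs verification by hand. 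Part $(b)$ follows by the same method after transposing the relevant indices, yielding $\alpha'(t)$, $\beta'(t)$ and the target $\A(h,t)$.
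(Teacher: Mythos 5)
Your proposal follows essentially the same route as the paper's own proof: the paper likewise reduces everything to the new $t$-exponent, computing $\alpha(t)$ as the normalization difference $(d(\B)-r(\B))+(d(\A)-r(\A))-(d(\A_t)-r(\A_t))$ via Lemma \ref{dimension1}, while taking the $v$-exponent $\beta(t)$, the Gaussian binomial coefficients, and the identification of the target orbits $\A_t$ from the one-parameter formula of \cite{BLM90} (your $t=1$ specialization check). Your extra geometric details (the orbit analysis and the subspace counting over $\mbb F_q$) merely make self-contained what the paper cites.
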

\begin{proof}
In order to give the proof of $(a)$,  We only need to proof the formula $\A(t)$.  By the direct computation.
$$d(\B) - r(\B) = \sum\limits_{j, u}a_{h, j}t_u, $$
$$d(\A) - r(\A) = \sum\limits_{i \geq k, j < l}a_{ij}a_{kl}, $$
$$d(\A_t) - r(\A_t) = \sum\limits_{i \geq k,  j <l}a_{ij}a_{kl} + \sum\limits_{j < u}a_{hj}t_u - \sum\limits_{l > u}a_{h+1, l}t_u + \sum\limits_{u < u'}t_ut_{u'}. $$
Then,
$$\alpha(t) = d(\B) - r(\B) + d(\A) - r(\A) - (d(\A_t) - r(\A_t)) = \sum_{j\geq l} a_{hj} t_l + \sum_{j>l} a_{h+1,  j} t_l - \sum_{j<l} t_j t_l. $$

Similarly,  we can obtain the proposition of $(b)$.
\end{proof}

\subsection{$\mcal S$-action on $\mcal V$}

A degenerate version of Proposition\ref{cor5}  gives us  an explicit description of  the $\mcal S$-action on $\mcal V=\mcal A_G(\mscr X\times \mscr Y)$ as follows.
For any $r_j\in [1, n]$,  we denote $\check{r}_j=r_j+1$ and $\hat{r}_j=r_j-1$.

\begin{cor}\label{cor9}
  For any $1\leq i\leq n - 1,  1\leq a\leq n - 1$,  we have

  \begin{align}
E_i *  \{e_{r_1\cdots r_d}\}
&=v^{\sum_{j>p}\delta_{i,  r_j} - \delta_{i + 1,  r_j}}t^{1+\sum_{j>p}\delta_{i,  r_j} + \delta_{i + 1,  r_j}} \
\{e_{r_1\cdots, \hat{r}_p\cdots r_{d}}\},  \nonumber\\
F_i *\{ e_{r_1\cdots r_d}\}
&=\sum_{1\leq p\leq d:r_p=i}
v^{\sum_{j<p}\delta_{i+1,  r_j} - \delta_{i,  r_j}}t^{\sum_{j<p}\delta_{i,  r_j} + \delta_{i + 1,  r_j}}
\{e_{r_1\cdots, \check{r}_p\cdots r_{d}}\},  \nonumber \\
A_a^{\pm 1} *  \{e_{r_1\cdots r_d}\}
& = v^{\pm \sum_{1\leq j \leq d} \delta_{a,  r_j}} t^{\pm \sum_{1\leq j \leq d} \delta_{a,  r_j}}  \{e_{r_1\cdots r_d}\} \quad {\rm and}\nonumber\\
B_a^{\pm 1} *  \{e_{r_1\cdots r_d}\}
& = v^{\mp \sum_{1\leq j \leq d} \delta_{a,  r_j}} t^{\pm \sum_{1\leq j \leq d} \delta_{a,  r_j}}  \{e_{r_1\cdots r_d}\} \quad {\rm and}\nonumber
\end{align}
\end{cor}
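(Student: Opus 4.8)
The plan is to obtain Corollary \ref{cor9} by specializing the multiplication formulas of Proposition \ref{cor5} to the situation in which the right-hand matrix lies in $\Pi$ rather than in $\Theta_d$. The computation underlying Proposition \ref{cor5} goes through verbatim when the target flag variety is $\mscr Y$ and $\A\in\Pi$, since the orbit-dimension data it needs are precisely those supplied by Lemma \ref{dimension1}; this is the sense in which the corollary is the degenerate case. The first concrete step is the combinatorial dictionary for $\Pi$: because every column of a matrix $\A=(a_{ij})\in\Pi$ sums to $1$, such a matrix is the same datum as a sequence $(r_1,\dots,r_d)$ with $r_j\in[1,n]$, where $a_{ij}=\delta_{i,r_j}$ records the row of the unique $1$ in column $j$, and $\{e_{r_1\cdots r_d}\}$ is the normalized basis vector of the associated orbit. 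One then identifies the generators as the elements that Proposition \ref{cor5} multiplies by: comparing the defining weights of $E_i$ and $F_i$ with the normalizations $v^{-(d-r)}t^{d-r}$, the function $E_i$ is, up to a correction factor discussed below, the element $\{\B\}$ with $\B-E_{i,i+1}$ diagonal and $r=1$, the function $F_i$ is $\{\C\}$ with $\C-E_{i+1,i}$ diagonal and $r=1$, while $A_a^{\pm1}$ and $B_a^{\pm1}$ are supported on the diagonal orbits.

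For the two diagonal generators no recourse to Proposition \ref{cor5} is needed: convolution with a function supported on the diagonal acts as a scalar on each orbit, and that scalar is read off from the defining formulas for $A_a^{\pm1}$, $B_a^{\pm1}$ together with $|V_a/V_{a-1}|=\sum_{j}\delta_{a,r_j}$. This yields the last two identities at once.

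For $E_i$ I would invoke Proposition \ref{cor5}$(a)$ with $h=i$ and $r=1$, and for $F_i$ Proposition \ref{cor5}$(b)$ with $h=i$ and $r=1$. The constraint $\sum_u t_u=1$ forces a single $t_p=1$ with all other $t_u=0$, and nonnegativity of the entries of $\A_t$ forces $a_{i+1,p}=1$, i.e. $r_p=i+1$; the operation then moves the lone $1$ of column $p$ from row $i+1$ to row $i$, so that $r_p$ is replaced by $\hat r_p$. Since each $a_{iu}\in\{0,1\}$ and exactly one $t_u$ equals $1$, every factor $\overline{\binom{a_{iu}+t_u}{t_u}}_v$ degenerates to $1$, and the sum over $t$ collapses to a sum over the columns $p$ with $r_p=i+1$, each contributing the single orbit $\{e_{r_1\cdots\hat r_p\cdots r_d}\}$ with a monomial coefficient. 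The case of $F_i$ is symmetric, producing $\check r_p$ by moving a $1$ from row $i$ to row $i+1$ over all $p$ with $r_p=i$.

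It then remains to match exponents. Substituting $t_l=\delta_{l,p}$ and $a_{ij}=\delta_{i,r_j}$ into $\beta(t)$ kills the quadratic term $\sum_{j<l}t_jt_l$ and, after using $\delta_{i,r_p}=0$, yields exactly the stated $v$-exponent $\sum_{j>p}(\delta_{i,r_j}-\delta_{i+1,r_j})$; the same substitution into $\alpha(t)$ gives $\sum_{j>p}(\delta_{i,r_j}+\delta_{i+1,r_j})$. The main, and essentially the only, obstacle is to account for the discrepancy between the intrinsic normalization $\{\B\}$ used in Proposition \ref{cor5} and the explicit weights $v^{-|V'_i/V'_{i-1}|}t^{-|V_i/V_{i-1}|}$ defining $E_i$. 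I expect this discrepancy to be a pure power of $t$, since the $v$-exponent already matches exactly, and to account for the additive $1$ appearing in the $t$-exponent of the statement. Carrying out this single normalization comparison, and its mirror image for $F_i$, completes the proof.
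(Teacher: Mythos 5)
Your proposal is correct and follows essentially the same route as the paper, whose entire proof consists of the remarks that the first two identities ``follow directly from Proposition \ref{cor5}'' and that the last two are straightforward; you have simply supplied the details (the dictionary $a_{ij}=\delta_{i,r_j}$, the collapse of the sum to $t_p=1$ with $r_p=i+1$, and the exponent matching) that the paper omits. Your flagged normalization discrepancy is also exactly right: the paper itself records, just after Corollary \ref{S-M}, that $E_i=\sum t\{\B\}$ while $F_i=\sum\{\C\}$, which is precisely the extra factor of $t$ producing the additive $1$ in the $t$-exponent of the $E_i$ formula and its absence in the $F_i$ formula.
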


\begin{proof}
The first two identities follow directly  from Proposition \ref{cor5}.
The last two  identities are straightforward.
\end{proof}

\subsection{$\mcal H_{\mscr Y}$-action on $\mcal V$}

\begin{Def} The two parameter Iwahori-Hecke algebra $\Hd(v, t)$ of type $\mbf A_d$ is a unital associative algebra over $\mathbb Q(v, t)$
generated by $T_i$ for $i\in [1,  d - 1]$ and subject to the following relations.
\begin{equation*}
\begin{split}
& T_i^2   =(vt-v^{-1}t) T_i + t^2 ,  \quad 1\leq i \leq d - 1, \\
& T_j T_{j+1} T_j  = T_{j+1} T_j T_{j+1}, \; \,   1\leq j\leq d-2, \\
& T_i T_j  =T_j T_i, \ \hspace{1.9cm} |i - j|>1.
\end{split}
\end{equation*}
\end{Def}

We shall provide an explicit description of the action of $\mcal H_{\mscr Y}$ on $\mcal V$.
For any $1\leq j\leq d-1$,  we define  a function $T_j$ in $\mcal H_{\mscr Y}$  by
\begin{align*}
T_j (F,  F') &=
\begin{cases}
v^{-1}t,  & \mbox{if} \; F_i = F_i' \ \forall i\in [1,  d]\backslash  \{j\},  F_j\neq F_j';\\
0,  & \mbox{otherwise}.
\end{cases}
\end{align*}

\begin{lem} \label{Geometric-Hecke}
 The assignment of sending the  functions $T_j$,   for $1\leq j\leq  d-1$,  in  the algebra $\mcal H_{\mscr Y}$ to the generators of
 $\Hd$ in the same notations is an isomorphism.
\end{lem}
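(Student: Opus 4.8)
The plan is to establish that the geometric generators $T_j \in \mcal H_{\mscr Y}$ satisfy exactly the defining relations of $\Hd(v,t)$, and then to check that the assignment is a bijection. Since the isomorphism is asserted on generators, I would break the argument into two parts: first verify the relations (so the assignment extends to a well-defined algebra homomorphism), and then verify it is an isomorphism by a dimension/basis count. Throughout I would work with the specialization $v = \sqrt{q}$ and the convolution formula \eqref{eq30}, computing the structure constants combinatorially and then recognizing them as polynomials in $q$ that force the identities over all of $\mcal A$.

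\emph{Step 1 (the quadratic relation).} I would compute $T_j * T_j$ directly from the convolution product. For a fixed pair of complete flags $(F,F')$ differing at most in the $j$-th step, I would sum over all intermediate complete flags $F''$ with $F''_i = F_i$ for $i \neq j$. Such $F''_j$ range over the lines in $F_{j+1}/F_{j-1}$, a two-dimensional space, giving $q+1$ choices over $\mbb F_q$. Separating the case $F''_j = F_j = F'_j$ from $F''_j \neq F_j$ and weighting each term by the value $v^{-1}t$ of $T_j$, I expect to recover $T_j^2 = (vt - v^{-1}t)T_j + t^2\,\mrm{Id}$ once $v^2$ is identified with $q$; the extra $t$-powers are exactly engineered by the $t$-weighting in the definition of $T_j$ so as to produce the two-parameter quadratic relation rather than the one-parameter one.

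\emph{Step 2 (braid and commutation relations).} The cubic braid relation $T_jT_{j+1}T_j = T_{j+1}T_jT_{j+1}$ and the far-commutation $T_iT_j = T_jT_i$ for $|i-j|>1$ are, up to the $t$-bookkeeping, identical to the classical one-parameter computation for the Iwahori--Hecke algebra realized on pairs of complete flags (the $\mscr Y \times \mscr Y$ orbits are indexed by $S_d$ via $\Sigma$, and convolution reflects multiplication in the $0$-Hecke/Hecke algebra). I would note that the $t$-powers contributed by the definition of $T_j$ are symmetric enough that they match on both sides of each relation, so these relations reduce to the known type-$\mbf A$ case; I would cite the analogous computation rather than grind it out. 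This gives a well-defined algebra homomorphism $\Hd(v,t) \to \mcal H_{\mscr Y}$.

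\emph{Step 3 (bijectivity).} To upgrade to an isomorphism, I would use that $\mcal H_{\mscr Y} = \mcal A_G(\mscr Y \times \mscr Y)$ is a free $\mcal A$-module with basis the orbit indicator functions $e_\sigma$, $\sigma \in \Sigma$, and that $\#\Sigma = d!$ by the count recorded in the excerpt. Since $\Hd(v,t)$ is free of rank $d!$ over $\mbb Q(v,t)$ with basis indexed by $S_d$, it suffices to show the images of the $T_j$ generate $\mcal H_{\mscr Y}$ and that the homomorphism sends a basis to a basis. I would argue that products $T_{i_1} * \cdots * T_{i_\ell}$ along a reduced word for $w \in S_d$ yield $e_{\sigma_w}$ plus a triangular correction by lower-length orbit functions (the standard deformation/Bruhat-order triangularity), so the images span, and a rank comparison forces the homomorphism to be an isomorphism. \textbf{The main obstacle} I anticipate is Step 1: getting the $t$-exponents in the quadratic relation to come out exactly as $(vt - v^{-1}t)T_j + t^2$ requires tracking the $t$-weights in the definition of $T_j$ against the $t$-grading conventions used elsewhere in the paper, and a sign or exponent slip there would produce the wrong quadratic relation; the braid relations and the counting argument are comparatively routine.
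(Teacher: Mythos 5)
Your proposal is correct, and it actually supplies more than the paper does: the paper states Lemma \ref{Geometric-Hecke} with no proof at all, implicitly treating it as the two-parameter version of the classical Iwahori isomorphism between the convolution algebra on pairs of complete flags and the Hecke algebra (the same way it later handles Lemma \ref{H-action} by citing the type-$\mbf A$ computation in [GL92]). Your three steps are the right ones and they all go through. In Step 1 the count is as you say: writing $T_j = v^{-1}t\, e_{s_j}$ with $e_{s_j}$ the orbit characteristic function, the intermediate flags $F''_j$ are lines in the two-dimensional space $F_{j+1}/F_{j-1}$, giving $q$ admissible choices when $F=F'$ and $q-1$ when $F\neq F'$, so at $v^2=q$ one gets exactly $T_j^2=(vt-v^{-1}t)T_j+t^2$; your worry about the $t$-exponents is unfounded because each generator carries the single factor $v^{-1}t$ and the relation is homogeneous of degree $2$ in $t$ on both sides. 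One simplification you could make in Steps 2 and 3: for any reduced word, length-additivity gives $e_{s_{i_1}}*\cdots*e_{s_{i_\ell}}=e_w$ exactly, with no lower-order Bruhat corrections, so the braid relations are immediate ($s_js_{j+1}s_j=s_{j+1}s_js_{j+1}$ and both sides equal $(v^{-1}t)^3e_{s_js_{j+1}s_j}$), and in Step 3 the homomorphism visibly sends the standard basis $\{T_w\}_{w\in S_d}$ to the basis $\{(v^{-1}t)^{\ell(w)}e_{\sigma_w}\}$, so bijectivity follows without invoking triangularity; the count $\#\Sigma=d!$ recorded in the paper then matches the rank of $\Hd$ over $\mbb Q(v,t)$.
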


Given $B=(b_{ij})\in \Pi$,  let $r_c$ be the unique number in $[1, n]$ such that $b_{r_c,  c} =1$ for each $c\in [1,  d]$.
The correspondence $B\mapsto \tilde{\mbf r}= (r_1, \cdots,  r_d)$ defines a bijection between $\Pi$ and the set of all sequences
$(r_1, \cdots,  r_d)$ .
Denote by $e_{r_1 \dots r_d}$ the characteristic function of the $G$-orbit corresponding to the matrix $B$ in $\mcal V$.
It is clear that the collection of these characteristic functions provides a basis for $\mcal V$.

\begin{lem}
\label{H-action}
The action of $\mcal H_{\mscr Y}$ on $\mcal V$ is described as follows.
For $1\leq j\leq d-1$,  we have
\begin{eqnarray}
& \{e_{r_1 \dots r_d }\} T_j =
\begin{cases}
\{e_{r_1\dots r_{j-1} r_{j+1} r_j\dots r_d}\} ,   &  r_j < r_{j+1};\\
vt \{e_{r_1 \dots r_d}\},  &  r_j = r_{j+1};\\
(vt-v^{-1}t) \{e_{r_1 \dots r_d}\} + t^{2} \{e_{r_1\dots r_{j-1} r_{j+1} r_j\dots r_d}\},  & r_j> r_{j+1}.
\end{cases}
\label{1}
\end{eqnarray}
\end{lem}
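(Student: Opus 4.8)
The plan is to read off the right action directly from the convolution product on $\mcal V=\mcal A_G(\mscr X\times\mscr Y)$, in the spirit of the analogous computations in \cite{BLM90} and \cite{BKLW13}. For the basis element $e_{\tilde{\mbf r}}$ and the generator $T_j$, the product is the function whose value at a pair $(V,F)$ is
\[
(e_{\tilde{\mbf r}}* T_j)(V,F)=\sum_{F'\in\mscr Y}e_{\tilde{\mbf r}}(V,F')\,T_j(F',F),
\]
so by the definition of $T_j$ only those $F'$ that agree with $F$ outside the $j$-th step and satisfy $F_j'\neq F_j$ contribute, each with weight $v^{-1}t$. Thus the whole computation reduces to counting, for a fixed $(V,F)$ of some type $\tilde{\mbf s}$, the lines $F_j'/F_{j-1}$ inside the two-dimensional space $W=F_{j+1}/F_{j-1}$ that produce the prescribed type $\tilde{\mbf r}$.

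The key geometric observation is that replacing $F_j$ by another line of $W$ changes only the colours recorded at positions $j$ and $j+1$, and these colours are governed by the induced $V$-filtration on $W$. Since $W$ carries exactly the two colours $\{r_j,r_{j+1}\}$ as a multiset, I would run the following line count over $\mbb F_q$, using the specialization $q=v^2$: the space $W$ contains $q+1$ lines in all; when $r_j\neq r_{j+1}$ there is a unique line of the smaller colour and $q$ lines of the larger colour, while when $r_j=r_{j+1}$ all $q+1$ lines carry the same colour. Distinguishing whether the original line $\bar F_j$ is the small- or large-colour line in each of the three cases $r_j<r_{j+1}$, $r_j=r_{j+1}$, $r_j>r_{j+1}$ yields the unnormalized identities $e_{\tilde{\mbf r}}* T_j=v^{-1}t\,e_{\tilde{\mbf r}'}$, then $vt\,e_{\tilde{\mbf r}}$, and finally $(vt-v^{-1}t)\,e_{\tilde{\mbf r}}+vt\,e_{\tilde{\mbf r}'}$, where $\tilde{\mbf r}'$ denotes the sequence obtained from $\tilde{\mbf r}$ by swapping the $j$-th and $(j+1)$-th entries.

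Finally I would pass to the normalized basis $\{\cdot\}$. Writing $\delta(\tilde{\mbf r})=d(\A)-r(\A)$ for the matrix $\A\in\Pi$ attached to $\tilde{\mbf r}$, Lemma \ref{dimension1} gives $\delta(\tilde{\mbf r})=\#\{(p,p'):p<p',\ r_p\geq r_{p'}\}$, and a short comparison of the two sequences shows that only the adjacent pair at $(j,j+1)$ is affected, so that $\delta(\tilde{\mbf r}')-\delta(\tilde{\mbf r})=+1$ when $r_j<r_{j+1}$ and $-1$ when $r_j>r_{j+1}$. Substituting $\{e_{\tilde{\mbf r}}\}=v^{-\delta(\tilde{\mbf r})}t^{\delta(\tilde{\mbf r})}e_{\tilde{\mbf r}}$ and the analogous expression for $\tilde{\mbf r}'$ into the unnormalized formulas converts the coefficients into $1$, into $vt$, and into the pair $(vt-v^{-1}t),\,t^2$, matching the statement. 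The main obstacle is precisely this normalization bookkeeping: the raw line counts are clean powers of $q=v^2$, but one must track carefully how the weight $v^{-1}t$ from $T_j$, the factor $q=v^2$ from the line count, and the $\pm1$ shift in $\delta$ combine, so that (for instance) the swap term $v^{-1}t\cdot q$ becomes $vt\,e_{\tilde{\mbf r}'}$ before normalization and then exactly $t^2\{e_{\tilde{\mbf r}'}\}$ afterwards.
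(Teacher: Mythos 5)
Your proposal is correct and is, in substance, the same argument the paper relies on: the paper's own proof is only a one-line citation of~\cite[1.12]{GL92} with the remark that the proof is almost the same as the one-parameter type-$\mathbf{A}$ case, and your convolution computation --- counting lines in $F_{j+1}/F_{j-1}$ over $\mathbb{F}_q$ with $q=v^2$ to get the unnormalized coefficients $v^{-1}t$, $vt$, and $(vt-v^{-1}t,\ vt)$ in the three cases, then using the shift $\delta(\tilde{\mathbf{r}}')=\delta(\tilde{\mathbf{r}})\pm 1$ coming from Lemma~\ref{dimension1} to pass to the normalized basis $\{e_{\tilde{\mathbf{r}}}\}$ --- is precisely that argument carried out with the two-parameter weights. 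All of your coefficients and the normalization bookkeeping check out against the stated formula, so your write-up simply supplies the details the paper leaves to the citation.
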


\begin{proof}
Formula (\ref{1}) similar with the one in  ~\cite[1. 12]{GL92},  whose  proof is also almost the same as one parameter of type-$\mbf A$ case.

\end{proof}

\subsection{Generators of $\mcal S$}
\label{sec-partial order}

Define a partial order $``\leq"$ on $\Theta_d$  by $\Aa\leq \B$ if $\mcal O_{\Aa} \subset \overline{\mcal O}_{\B}$.
For any $\Aa=(a_{ij})$ and $\B=(b_{ij})$ in $\Xi_{\mbf d}$,  we say that $\Aa \preceq \B$ if and only if
the following two conditions hold.
\begin{align}\label{partial-order}
 \sum_{r\leq i,  s\geq j} a_{rs}  & \leq \sum_{r\leq i,  s\geq j} b_{rs},  \quad \forall i<j. \\
  \sum_{r\geq i,  s\leq j} a_{rs}  & \leq \sum_{r\geq i,  s\leq j} b_{rs},  \quad \forall i>j.
\end{align}
The  relation $``\preceq" $ defines a second partial order on  $\Theta_d$.
 We say that $\Aa \prec \B$ if $\Aa \preceq \B$ and at least one of the inequalities in (\ref{partial-order}) is strict.
We shall denote by ``$\{ \m\}$+ lower terms" an element in $\mcal S$ which is equal to $\{\m\}$
plus a linear combination of $\{\m'\}$ with $\m' \prec \m$.
By Proposition (\ref{cor5}),  we have

\begin{cor} \label{cor7}
Assume that $1 \leq h <n$, $1 \leq h \leq n$,  $M = (m_{i, j}) \in \theta_{ d}$.

$(a)$ Assume that $\ m_{h, j}=0,  \forall j > k ,  m_{h+1, j}=0,  \forall j \geq k$.  Let $r = m_{h, k}$,  $\Aa=(a_{ij}) \in \Xi_{\mbf d}$ satisfies  the following two conditions:
$\ a_{h, k}=0,  a_{h+1, k}=r,    , a_{i, j} = m_{i, j}  {\rm for} \ {\rm all}\  {\rm other}\ $ $i, j. $
If   $\B $  is subject to  $\B-rE_{h, h+1}$ is diagonal,
 $\co(\B)=\ro(\Aa)$,  then
$$\{\B\} * \{\Aa\}=\{ M \} +{\rm lower\ terms}. $$

$(b)$ Assume that $\ m_{h, j}=0,  \forall j \leq k ,  m_{h+1, j}=0,  \forall j < k$.  Let $r = m_{h+1, k}$,  $\Aa=(a_{ij}) \in \theta_{ d}$ satisfies  the following two conditions:
$\ a_{h, k}= r,  a_{h+1, k}=0,    , a_{i, j} = m_{i, j}  {\rm for} \ {\rm all}\  {\rm other}\ $ $i, j. $
If   $\C $  is subject to  $\C-rE_{h, h+1}$ is diagonal,
 $\co(\C)=\ro(\Aa)$,  then
$$\{\C\} * \{\Aa\}=\{ M \} +{\rm lower\ terms}. $$
\end{cor}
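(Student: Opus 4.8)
The plan is to deduce Corollary~\ref{cor7} directly from the multiplication formula in Proposition~\ref{cor5}, by showing that the chosen source matrix $\Aa$ produces $\{M\}$ as the leading term while all other summands land strictly below $M$ in the partial order $\preceq$. I will treat part $(a)$ in detail; part $(b)$ follows by the transpose-type symmetry already used to derive \eqref{eq24} from \eqref{eq22}.

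First I would set up the summation. By Proposition~\ref{cor5}$(a)$, since $\B - rE_{h,h+1}$ is diagonal and $\co(\B)=\ro(\Aa)$, the product expands as
\begin{equation*}
\{\B\}*\{\Aa\} = \sum_{t:\,\sum_u t_u = r} v^{\beta(t)}t^{\alpha(t)} \prod_{u=1}^n \overline{\begin{pmatrix} a_{hu}+t_u \\ t_u \end{pmatrix}}_{\!\!\!{}v}\ \{\Aa_t\},
\end{equation*}
where $\Aa_t = \Aa + \sum_u t_u(E_{hu}-E_{h+1,u})$. The key observation is that the hypotheses on $M$ force $\Aa$ to have a very special shape in rows $h$ and $h+1$: namely $a_{h,k}=0$, $a_{h+1,k}=r$, and $a_{h,j}=m_{h,j}=0$ for $j>k$ while $a_{h+1,j}=m_{h+1,j}=0$ for $j\geq k$. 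I would then identify the distinguished multi-index $t^\circ$ defined by $t^\circ_k = r$ and $t^\circ_u = 0$ for $u\neq k$: for this choice one computes $\Aa_{t^\circ} = \Aa + r(E_{hk}-E_{h+1,k}) = M$, so the corresponding summand carries the term $\{M\}$.

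Next I would verify that the coefficient of $\{M\}$ is exactly $1$. For $t=t^\circ$ the binomial product collapses to $\overline{\binom{a_{hk}+r}{r}}_v = \overline{\binom{r}{r}}_v = 1$ (using $a_{hk}=0$), and I would check that the exponents $\beta(t^\circ)$ and $\alpha(t^\circ)$ vanish, using the vanishing entries $a_{hj}=0$ for $j\geq k$ and $a_{h+1,j}=0$ for $j>k$ that kill the relevant sums in the formulas for $\alpha(t)$ and $\beta(t)$; this makes the leading coefficient $v^0 t^0 = 1$. The remaining task is to show every other admissible $t$ contributes a term $\{\Aa_t\}$ with $\Aa_t \prec M$. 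Here I would pass to the characterization of $\preceq$ in \eqref{partial-order} and compare the partial column-sums $\sum_{r\leq i,\,s\geq j}$ of $\Aa_t$ against those of $M=\Aa_{t^\circ}$; moving mass $t_u$ from row $h+1$ to row $h$ in columns $u\neq k$ (rather than concentrating all of $r$ in column $k$) strictly increases or preserves these sums in the direction prescribed by $\prec$, with strict inequality occurring for at least one $(i,j)$ whenever $t\neq t^\circ$.

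The main obstacle I anticipate is the last step: verifying rigorously that $\Aa_t \prec M$ for all $t\neq t^\circ$, since this is a genuinely combinatorial comparison of the two partial orders and requires carefully tracking how redistributing the $t_u$'s among the columns affects the rank-type sums in \eqref{partial-order}. This is exactly the analogue of the leading-term computation in \cite{BLM90}, so I would model the argument on theirs, adapting it to the $n\times d$ rectangular setting and to the second partial order $\preceq$ defined above. The exponent and coefficient bookkeeping is routine once the index $t^\circ$ is isolated; the order comparison is where the real content lies.
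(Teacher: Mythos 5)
Your proposal is correct and follows essentially the same route as the paper's proof: expand via Proposition~\ref{cor5}, identify the distinguished index $t^\circ=(0,\dots,0,r,0,\dots,0)$ with $r$ in the $k$-th place so that $\Aa_{t^\circ}=M$, check that the twist exponents $\alpha(t^\circ)$, $\beta(t^\circ)$ and the binomial coefficient give a leading coefficient of $1$, and defer the comparison $\Aa_t \prec M$ for $t\neq t^\circ$ to the argument of \cite[3.8]{BLM90}. The paper is merely terser, citing \cite{BLM90} for the leading-term identification and the lower-terms claim and only verifying the new two-parameter exponent $\alpha(t^\circ)=0$, which is exactly the bookkeeping you carry out as well.
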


\begin{proof}
In case (a),  from the proof of the  ~\cite[3. 8]{BLM90},  we have that $\{M\}$ is correspondence to $\mbf t = (0,  \cdots0, ,  R,  0, \cdots,  0)$ , where $R$ is in the k place.  Therefore,  $\alpha(t)
 = \sum_{j\geq k} a_{h, j} t_k + \sum_{j>k} a_{h+1,  k} t_k - \sum_{j<l} t_j t_l = 0. $ Then (a) follows.

In case (b),  we have we have that $\{M\}$ is correspondence to $\mbf t = (0,  \cdots0, ,  R,  0, \cdots,  0)$,  where $R$ is in the k place.  Therefore,
$\alpha'(t)
=\sum_{j\leq l} a_{h+1, j} t_l + \sum_{j<l} a_{hj} t_l - \sum_{j<l} t_j t_l = 0. $Then (b) follows.
\end{proof}

\begin{thm}\label{thm1}
For any $\Aa=(a_{ij})\in \Theta_{\mbf d}$.  The following identity holds in $\mcal S$
$$\prod_{1\leq i \leq h < j \leq n} \{D_{i, h, j} + a_{i, j}E_{h, h+1} \}*\prod_{1\leq j \leq h < i \leq n} \{D_{i, h, j} + a_{i, j}E_{h+1, h} \}  = \{\Aa\}+{\rm lower\ terms}, $$
where the product is taken in the following order.
The factors in the first product are taken in the following order: $(i,  h,  j)$ comes before $(i',  h',  j')$ if either $j > j'$ or $j = j'$,  $h - i < h' - i'$,  or $j = j',  h - i = h' - i',  i' > i$.
The factors in the second product are taken in the following order: $(i,  h,  j)$ comes before $(i',  h',  j')$ if either $i < i'$ or $i = i'$,  $h - j > h' - j'$,  or $i = i',  h - j = h' - j',  j' < j$.
The matrices $D_{i, h, j}$ are diagonal with entries in $\mathbb{N}$.  Which are uniquely determined.
\end{thm}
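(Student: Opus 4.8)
The plan is to prove the identity by iterating the single-step multiplication formula of Corollary \ref{cor7}, building up the matrix $\Aa$ from a diagonal matrix one off-diagonal entry at a time. Each factor in the two products is an elementary matrix of the form $\{D+aE_{h,h+1}\}$ (resp. $\{D+aE_{h+1,h}\}$) with $D$ diagonal, and by Corollary \ref{cor7} multiplying such a factor onto the current leading-term matrix moves an entry of value $a$ up one row (resp. down one row) within a fixed column, producing the next leading term together with strictly lower terms.

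First I would treat the upper-triangular part. For a fixed pair $(i,j)$ with $i<j$, the triples $(i,h,j)$ with $i\le h<j$ appearing in the first product correspond, read in the prescribed order, to the successive steps that transport the mass $a_{ij}$ from the diagonal position $(j,j)$ up column $j$ to its target position $(i,j)$: the step indexed by $h$ moves the entry from row $h+1$ to row $h$. The ordering ``$j>j'$, then $h-i<h'-i'$, then $i'>i$'' is exactly what guarantees that at the moment we apply the factor indexed by $(i,h,j)$ the required source entry $a_{h+1,k}=a_{ij}$ has already been deposited by the previous factor, so that the hypotheses $a_{h,k}=0$, $a_{h+1,k}=r$ of Corollary \ref{cor7}(a) are met. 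The diagonal matrix $D_{i,h,j}$ is then forced: the constraint $\co(\B)=\ro(\Aa)$ in Corollary \ref{cor7} determines its diagonal entries uniquely in terms of the row sums of the current leading-term matrix, which proves the asserted uniqueness. The second product is handled symmetrically using Corollary \ref{cor7}(b), transporting the lower-triangular entries $a_{ij}$ ($i>j$) downward.

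To assemble these single steps into the full product I would argue by induction on the number of factors already multiplied, maintaining the invariant that the partial product equals $\{M^{(k)}\}+{\rm lower\ terms}$, where $M^{(k)}$ is the matrix built so far. The inductive step requires two things: that the next factor sends the leading term $\{M^{(k)}\}$ to $\{M^{(k+1)}\}+{\rm lower\ terms}$, which is exactly Corollary \ref{cor7}; and that it sends each lower term $\{M'\}$ with $M'\prec M^{(k)}$ to a combination of terms still $\prec M^{(k+1)}$. The latter is the triangularity statement: convolution with an elementary factor is ``upper triangular'' with respect to the order $\preceq$ of (\ref{partial-order}), so the lower terms can never overtake the advancing leading term.

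The main obstacle I anticipate is precisely this last point --- controlling the lower terms under repeated multiplication. Verifying that the partial order $\preceq$ is preserved by convolution with the elementary matrices, and that the chosen order of the two products never forces a move whose source entry has been disturbed by accumulated lower-order contributions, is the technical heart of the argument and follows the pattern of \cite[3.9]{BLM90}. The two-parameter powers of $v$ and $t$ arising in Proposition \ref{cor5} affect only the scalar coefficients and not which basis elements $\{M'\}$ occur, so the combinatorial order analysis transfers essentially verbatim from the one-parameter case.
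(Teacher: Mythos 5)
Your proposal is correct and takes essentially the same route as the paper: the paper's entire proof is a citation of \cite[3.9]{BLM90}, and your argument --- iterating Corollary \ref{cor7} in the prescribed order, determining the diagonal factors $D_{i,h,j}$ from the constraint $\co(\B)=\ro(\Aa)$, and controlling lower terms by triangularity with respect to the partial order $\preceq$ --- is exactly the fleshed-out content of that citation. Your closing observation, that the two-parameter powers of $t$ in Proposition \ref{cor5} change only the scalar coefficients and not which basis elements $\{\m'\}$ occur, is precisely the point that lets the one-parameter combinatorial argument of \cite{BLM90} transfer verbatim.
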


\begin{proof}
The proof of this theorem is similar to the \cite[3. 9]{BLM90}.
\end{proof}

We have immediately

\begin{cor} \label{S-M}
The products  $\m_{\A}=\prod_{1\leq i \leq h < j \leq n} \{D_{i, h, j} + a_{i, j}E_{h, h+1} \}*\prod_{1\leq j \leq h < i \leq n} \{D_{i, h, j} + a_{i, j}E_{h+1, h} \} $ for any  $\A\in \Theta_{\mbf d}$ in Theorem ~\ref{thm1} form  a basis for $\mcal S$.
\end{cor}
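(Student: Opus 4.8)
The plan is to deduce this from Theorem~\ref{thm1} by the standard triangularity argument: Theorem~\ref{thm1} realizes each $\m_{\A}$ as $\{\A\}$ plus strictly lower terms with respect to the order $\preceq$, so the passage from the already-known basis $\{\{\A\}\mid\A\in\Theta_d\}$ to the family $\{\m_{\A}\mid\A\in\Theta_d\}$ is given by a unitriangular, hence invertible, transition matrix.

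First I would record that $\{\{\A\}\mid\A\in\Theta_d\}$ is itself an $\mcal A$-basis of $\mcal S$. Indeed the characteristic functions $e_{\A}$, $\A\in\Theta_d$, form a basis by construction, and by definition $\{\A\}=v^{-(d(\A)-r(\A))}t^{d(\A)-r(\A)}e_{\A}$ differs from $e_{\A}$ only by the unit $v^{-(d(\A)-r(\A))}t^{d(\A)-r(\A)}\in\mcal A^{\times}$; rescaling each member of a basis by a unit again yields a basis.

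Next I would invoke Theorem~\ref{thm1}, which says precisely that $\m_{\A}=\{\A\}+{\rm lower\ terms}$, i.e. $\m_{\A}=\{\A\}+\sum_{\A'\prec\A}c_{\A,\A'}\{\A'\}$ with $c_{\A,\A'}\in\mcal A$. Since the index set $\Theta_d$ is finite, I may fix a total order $\le_{0}$ on $\Theta_d$ refining the partial order $\preceq$ (a linear extension of a finite poset always exists). Writing $\m_{\A}=\sum_{\A'}p_{\A,\A'}\{\A'\}$, the matrix $P=(p_{\A,\A'})$ is then triangular with respect to $\le_{0}$ with all diagonal entries $p_{\A,\A}=1$, because every off-diagonal $\A'$ that occurs satisfies $\A'\prec\A$ and hence $\A'<_{0}\A$.

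Finally I would conclude that $P$, being unitriangular over the commutative ring $\mcal A$, has determinant $1$ and is therefore invertible; consequently $\{\m_{\A}\mid\A\in\Theta_d\}$ is obtained from the basis $\{\{\A\}\}$ by an invertible change of coordinates and is itself an $\mcal A$-basis of $\mcal S$. There is essentially no obstacle beyond Theorem~\ref{thm1}: the only point demanding attention is that the ``${\rm lower\ terms}$'' are genuinely indexed by matrices strictly smaller in $\preceq$, so that no off-diagonal self-coupling spoils unitriangularity; this is built into the statement of Theorem~\ref{thm1} and the definition of ``$+{\rm lower\ terms}$'', whence the corollary follows at once.
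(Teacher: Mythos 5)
Your proof is correct and is exactly the argument the paper has in mind: the paper states this corollary with no written proof beyond ``We have immediately,'' the implicit reasoning being precisely your unitriangularity argument --- $\{\A\}$ is a unit rescaling of the characteristic-function basis $e_{\A}$, Theorem~\ref{thm1} gives $\m_{\A}=\{\A\}+\sum_{\A'\prec\A}c_{\A,\A'}\{\A'\}$, and the resulting transition matrix is unitriangular (after refining $\preceq$ to a total order on the finite set $\Theta_d$), hence invertible over $\mcal A$. You have simply spelled out the standard details that the paper leaves tacit.
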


By (\ref{eq22}),  (\ref{eq24}) and Corollary ~\ref{S-M},  we have

\begin{cor} \label{S-1}
The algebra $\mcal S$ (resp.  $\mbb Q(v) \otimes_{\mcal A} \mcal S$) is generated by the elements $[\mathfrak e]$ such that
$\mathfrak e - RE_{i,  i+1}$ (resp.  either $\mathfrak e$ or  $\mathfrak e - RE_{i,  i+1}$)  is diagonal
for some $R\in \mbb N$ and $i\in [1,  n-1]$.
\end{cor}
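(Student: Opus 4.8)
The plan is to read the generating statement off the basis produced in Corollary~\ref{S-M}, using the multiplication formulas (\ref{eq22}) and (\ref{eq24}) only for the refinement over $\mbb Q(v)$. The key observation, already built into Theorem~\ref{thm1}, is that every factor occurring in the product $\m_{\A}$ is one of $\{D_{i,h,j}+a_{i,j}E_{h,h+1}\}$ or $\{D_{i,h,j}+a_{i,j}E_{h+1,h}\}$ with $D_{i,h,j}$ diagonal; that is, each factor is exactly an element $[\mathfrak e]$ for which $\mathfrak e$ differs from a diagonal matrix by a single off-diagonal block $RE_{i,i+1}$ or $RE_{i+1,i}$ with $R=a_{i,j}\in\mbb N$. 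Hence each $\m_{\A}$ lies, by its very definition, in the $\mcal A$-subalgebra $\mcal S'$ generated by the elements listed in the statement. Since by Corollary~\ref{S-M} the set $\{\m_{\A}\mid \A\in\Theta_{\mbf d}\}$ is an $\mcal A$-basis of $\mcal S$, the subalgebra $\mcal S'$ contains a basis of $\mcal S$ and therefore $\mcal S'=\mcal S$. This settles the assertion for $\mcal S$.

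For $\mbb Q(v)\otimes_{\mcal A}\mcal S$ I would refine the generating set by separating the purely diagonal elements from the off-diagonal ones. Using (\ref{eq22}) and (\ref{eq24}), each mixed factor $\{D+RE_{i,i+1}\}$ (resp. $\{D+RE_{i+1,i}\}$) is rewritten as a diagonal element $*$-multiplied by minimal off-diagonal generators, modulo terms that are strictly $\prec$-lower by the triangularity of Proposition~\ref{cor5}. Over the field $\mbb Q(v)$ the scalar and Gaussian-binomial coefficients carrying the leading term become invertible, so a descending induction on the partial order $\preceq$ lets one solve, at each stage, for the basis element $\{\A\}$ in terms of the reduced set together with strictly $\prec$-lower basis elements. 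Because $\Theta_{\mbf d}$ is finite and each inductive step strictly decreases in $\preceq$, the process terminates, and the reduced set (pure diagonals together with the minimal off-diagonal generators) generates all of $\mbb Q(v)\otimes_{\mcal A}\mcal S$.

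The main obstacle is precisely this second step: one must confirm that the coefficients $v^{\beta(t)}t^{\alpha(t)}$ and $v^{\beta'(t)}t^{\alpha'(t)}$ attached to the leading term in (\ref{eq22})--(\ref{eq24}), together with the accompanying Gaussian binomials, are units in $\mbb Q(v,t)$, so that each inductive step is genuinely invertible, and one must track that the ``lower terms'' generated along the way already lie in the reduced subalgebra so that the induction closes. The triangularity established in Theorem~\ref{thm1}, namely that each product equals $\{\A\}$ plus strictly $\prec$-lower terms, is the structural input that guarantees this descending induction is well founded.
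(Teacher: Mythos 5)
Your first paragraph is correct and is exactly the argument the paper intends for the $\mcal A$-form of the statement: each factor of $\m_{\A}$ in Theorem~\ref{thm1} is an element $\{\mathfrak e\}$ with $\mathfrak e - RE_{h,h+1}$ or $\mathfrak e - RE_{h+1,h}$ diagonal (note that you have silently, and correctly, read the statement as including the $E_{i+1,i}$-type elements, which the corollary as printed omits, evidently a typo), and Corollary~\ref{S-M} says these products form an $\mcal A$-basis, so the subalgebra they generate is all of $\mcal S$.

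The field half, however, has a structural gap. Your descending induction on $\preceq$ does not close: when you rewrite a factor $\{D + RE_{i,i+1}\}$ ``modulo strictly $\prec$-lower terms,'' those lower terms are general matrices in $\Theta_d$, not special ones, and the only tool your argument has for reaching a general matrix is Theorem~\ref{thm1}, whose factors are again special elements of arbitrary $R$ --- this is circular; moreover the factors of $\m_{\A}$ need not be $\preceq \A$, so at stage $\A$ the inductive hypothesis never applies to them. The gap is unnecessary, because with the right choice of factors the rewriting produces no lower terms at all. In (\ref{eq22}) take $\B$ with $\B - E_{h,h+1}$ diagonal ($r=1$) and $\A'$ with $\A' - (R-1)E_{h,h+1}$ diagonal, $\co(\B) = \ro(\A')$. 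Since row $h+1$ of $\A'$ is concentrated in its diagonal entry, the constraint $t_u \le a'_{h+1,u} = 0$ for $u \neq h+1$ forces the single term $t = \underline{h+1}$, so $\{\B\} * \{\A'\}$ equals a monomial in $v,t$ times $\overline{(R)_v}$ times $\{\A' + E_{h,h+1} - E_{h+1,h+1}\}$, a single term; every $\{\mathfrak e\}$ with $\mathfrak e - RE_{h,h+1}$ diagonal arises this way, and $(R)_v$ is invertible in $\mbb Q(v)$. Hence a plain induction on $R$ (not on $\preceq$), together with the symmetric use of (\ref{eq24}) for the $E_{h+1,h}$-type elements, shows every special element lies in the subalgebra generated by the diagonal elements and the $R=1$ elements, and combining this with your first paragraph gives the field statement. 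This single-term degeneration of (\ref{eq22})--(\ref{eq24}) is precisely the content behind the paper's bare citation of those formulas, and it is exactly the point you flagged as your ``main obstacle'' without resolving it.
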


Observe that
$
 \ E_i =\sum t\{\B\},  \ F_i = \sum \{\C\},  \ A^{\pm 1}_a =\sum  v^{\pm d_a}t^{\pm d_a} \{\D\},  B^{\pm 1}_a =\sum  v^{\mp d_a}t^{\pm d_a} \{\D\}, \quad \forall i\in [1,  n-1],  a\in [1,  n],
$
where $\B$,   $\C$ and $\D$ run over all matrices in $\Theta_{\bf d}$ such that $\B-E_{i,  i+1}$,   $\C-E_{i+1,  i}$ and $\D$ are diagonal,  respectively,  and  $d_a$ is  the $(a,  a)$-entry of the matrix in $\D$.
We have  the following corollary by Corollary ~\ref{S-1}.

\begin{cor} \label{S-generator}
The algebra $\mbb Q(v, t) \otimes_{\mcal A} \mcal S$ is generated by the functions $E_i$,  $F_i$,  $A_a^{\pm 1}$,  $B_{a}^{\pm 1}$
for any $i\in [1,  n-1]$,  $a\in [1,  n]$ .
\end{cor}

\section{The limit algebra $\mcal K$ }
\label{algebra-KD}

\subsection{Stabilization} \label{Stab}

Let $I$ be the identity matrix.  We set
$
{}_pA=A + pI.
$
Let $\widetilde{\Theta}$  be the set of all $n\times n$ matices with integer entries such that the entries off diagonal are $\geq 0$.

Let
$$\mcal K= \mbox{span}_{\mcal A} \{ \{\A\} | \A\in \wt{\Theta} \},
$$
where the notation $\{\A\}$ is a formal  symbol.
Let $v', t'$ be a independent indeterminates,  and we denote by $\mfk R$ the ring $\mathbb{Q}(v, t)[v', t']$ .

\begin{prop}
\label{prop5}
Suppose that $\A_1,  \A_2, \cdots,  \A_r \ (r\geq 2)$ are matrices in $\wt{\Theta}$
such that ${\rm co}(\A_i)={\rm ro}(\A_{i+1})$  for $1\leq i \leq r-1$.
There exist $\z_1,  \cdots,  \z_m\in \wt{\Theta}$,  $G_j(v, v', t, t')\in \mfk R$ and $p_0\in \mbb N$ such that in $\mcal S_d$ for some $d$,   we have
$$[{}_p \A_1] * [{}_p\A_2] * \cdots *[{}_p \A_r]=\sum_{j=1}^mG_j(v, v^{-p}, t, t^{p})[{}_p \z_j], \quad
\forall  p\geq p_0. $$
\end{prop}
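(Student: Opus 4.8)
The plan is to adapt the stabilization argument of \cite{BLM90} to the two-parameter setting, the essential new feature being that the $p$-dependence must be split into a $v$-part and a $t$-part, recorded respectively by the two indeterminates $v'$ and $t'$ of $\mfk R$.

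\emph{Step 1: the base case.} First I would treat a product of two factors $[{}_p\B]*[{}_p\A]$ in the special situation where $\B$ is near-diagonal, that is $\B-rE_{h,h+1}$ (resp. $\B-rE_{h+1,h}$) is diagonal with $\co(\B)=\ro(\A)$. Here Proposition \ref{cor5}, formula (\ref{eq22}) (resp. (\ref{eq24})), applies verbatim after replacing $\A$ by ${}_p\A$: the product is a sum over tuples $t$ with $\sum_u t_u=r$ of $v^{\beta(t)}t^{\alpha(t)}\prod_u\overline{\binom{a_{hu}+t_u}{t_u}}_v\,\{({}_p\A)_t\}$. Since ${}_p\A=\A+pI$ shifts only the diagonal entries $a_{hh},a_{h+1,h+1}$ of the two relevant rows by $p$, and since $({}_p\A)_t={}_p(\A_t)$, the supporting matrices are precisely the shifts of the $p$-independent matrices $\A_t=\A+\sum_u t_u(E_{hu}-E_{h+1,u})$.

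\emph{Step 2: tracking the $p$-dependence.} I would then read off how each factor of the coefficient changes. The single Gaussian binomial indexed by the shifted diagonal, $\overline{\binom{a_{hh}+p+t_h}{t_h}}_v$, becomes a polynomial in $v^{-p}$ after the bar involution, because $(m)_v=(v^{2m}-1)/(v^2-1)$; this produces the dependence on $v'=v^{-p}$. In the exponent $\beta(t)$ the two diagonal contributions cancel ($p\sum_{l\le h}t_l$ from the $a_{hh}$ term and $-p\sum_{l\le h}t_l$ from the $a_{h+1,h+1}$ term), so $\beta(t)$ is unchanged; in $\alpha(t)$ they instead add, producing a factor $(t^{p})^{\,2\sum_{l\le h}t_l}=(t')^{\,2\sum_{l\le h}t_l}$, which produces the dependence on $t'=t^{p}$. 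As all $t_l\ge 0$, the coefficients are genuine \emph{polynomials} in $v'$ and $t'$ over $\mathbb{Q}(v,t)$, hence lie in $\mfk R$, and one chooses $p_0$ large enough that every ${}_p\A_t$ has non-negative diagonal (so lies in a genuine $\Theta_{d}$) and the sum over $t$ is untruncated. This gives the asserted form in the base case.

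\emph{Step 3: reduction of the general case and induction.} For arbitrary $\A_1,\dots,\A_r$ I would use Theorem \ref{thm1} and Corollary \ref{S-M} to write each $\{\A_i\}$ as a $\mathbb{Q}(v,t)$-linear combination of the monomials $\m_{\A'}$, which are products of near-diagonal elements; inverting the transition matrix, unitriangular with respect to $\preceq$, expresses each $[{}_p\A_i]$ in the same way, Corollary \ref{S-1} guaranteeing that the near-diagonal generators suffice. Substituting, the whole product becomes an iterated product of near-diagonal multiplications, each governed by Step 1. Because the specialization $\mfk R\to\mathbb{Q}(v,t)$, $v'\mapsto v^{-p}$, $t'\mapsto t^{p}$, is a ring homomorphism for every fixed $p$, its image is closed under the sums and products occurring here; folding the factors together one at a time keeps the coefficients inside $\mfk R$ and, since only finitely many matrices appear at each stage, stabilizes the support $\{\z_j\}$ for $p\ge p_0$. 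An induction on $r$, peeling off $[{}_p\A_r]$ and applying the two-factor case to each $[{}_p\z_j]*[{}_p\A_r]$, then yields the statement. The main obstacle is the compatibility in Step 3: one must verify that the change of basis of Theorem \ref{thm1} commutes with the shift $\A\mapsto{}_p\A$, so that $\m_{{}_p\A}=[{}_p\A]+\text{lower terms}$ with lower-order coefficients again of the form $G(v,v^{-p},t,t^{p})$ and with $p$-independent support, and that inverting this relation stays within the same class. The second parameter roughly doubles this bookkeeping, since the $v^{p}$- and $t^{p}$-growth of every coefficient must be controlled separately, and checking that they always recombine into a true element of $\mfk R$, with no surviving negative powers of $v'$ or $t'$, is the crux.
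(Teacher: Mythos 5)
Your proposal is correct and follows essentially the same route as the paper's proof: the paper likewise reduces to the two-factor case with a near-diagonal factor via Proposition \ref{cor5}, records the $p$-dependence in structure constants $G_{t}(v,v',t,t')$ exactly as you describe (the shifted-diagonal binomial becoming a polynomial in $v'=v^{-p}$, the twist $\alpha(t)$ shifting by $2p\sum_{u\le h}t_u$ so as to produce the factor $t'^{2\sum_{u\le h}t_u}$, while $\beta(t)$ is unchanged by cancellation), and then handles the general case by repeating the argument of Proposition 4.2 of \cite{BLM90}, which uses the monomial decomposition of Theorem \ref{thm1} and induction just as in your Step 3. The only difference is expository: you spell out the $\beta(t)$ cancellation, the induction on $r$, and the compatibility of Theorem \ref{thm1} with the shift $\A\mapsto{}_p\A$, all of which the paper leaves implicit in its citation of \cite{BLM90}.
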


\begin{proof}
The proof is essentially the same as the one for Proposition 4. 2 in \cite{BLM90}
by using Corollary \ref{cor5} and Theorem \ref{thm1}.
The main difference is that  we should give how  the twists  $\alpha (t)$ and  $ \alpha'(t)$  change when $\A$ is replaced by
${}_p \A$.

 If $r=2$ and $\A_1$ is chosen such that
 $\A_1-RE_{h, h+1}$ is a diagonal with $R\in \mbb N$,
the structure constant $G_{t}(v, v', t, t')$ is defined by
$$G_{t}(v, v', t, t')=v^{\beta(t)}\prod_{\overset{1\leq u\leq n}{u\neq n}}\overline{\begin{pmatrix}a_{h, u}+t_u\\
  t_u\end{pmatrix}}_{\!\!\!{}v}\prod_{1\leq i\leq t_h}\frac{v^{-2(a_{h, h}+t_h-i)}v'^2-1}{v^{-2i}-1}t^{\alpha(\mbf t)}t'^{2\sum_{h \geq u}t_u}. $$
Similaryly,
if  $r=2$ and $\A_1$ is chosen such that
$\A_1-RE_{h+1, h}$ is  diagonal with $R\in \mbb N$,
the structure constant $G_{t}(v, v', t, t')$ is defined by
\[
G_{t}(v, v', t, t')=v^{\beta'(t)}
\prod_{1\leq u\leq n, u\neq h+1}\overline{\begin{pmatrix}a_{h+1, u}+t_u\\
  t_u\end{pmatrix}}_{\!\!\!{}v}\\
  \prod_{1\leq t\leq t_{h+1}}\frac{v^{-2(a_{h+1, h+1}+t_{h+1}-i+1)}v'^2}{v^{-2i}-1}t^{\alpha (t)}t'^{\sum_{h < u t_u}},
\]

Keep in mind  the above modifications,  the rest of the proof for Proposition 4. 2 in \cite{BLM90} can be repeated here.
\end{proof}

By specialization $v', t'$ at $v'=1, t' = 1$,  there is a unique associative $\mcal A$-algebra structure on $\mcal K$,  without unit,   where
 the product is given by
 $$\{\A_1\} \cdot \{\A_2\}\cdot \dots \cdot  \{\A_r\} =\sum_{j=1}^m G_j(v, 1, t, 1)[\z_j]$$
 if $\A_1, \cdots,  \A_r$ are as in Proposition \ref{prop5}.

Let $\A$ and  $\B \in \wt{\Theta}$ be chosen such that $\B -rE_{h, h+1}$
is  diagonal for some $1\leq h < n,  r\in \mbb N$
satisfying ${\rm co}(\B)={\rm ro}(\A)$ .  Then we have
\begin{equation}\label{eq58}
\{ \B\} \cdot  \{\A\}
 =\sum_{t} v^{\beta(t)}t^{\alpha(t)}\prod_{u=1}^N
\overline{\begin{pmatrix}a_{hu}+t_u\\ t_u \end{pmatrix}}_{\!\!\!{}v}\
\{\A_{t}\},
\end{equation}
where the sum is taken over all $t=(t_u)\in \mbb N^N$ such that
$\sum_{u=1}^n t_u=r$ and $t_u \leq a_{h+1, u}$ for all $u \neq h +1$,
$\alpha(t), \beta(t)$, $\A_{t} \in \wt{\Theta}$ are defined in (\ref{eq22}).

Similarly,  if $\A,  \C \in \widetilde{\Theta}$  are chosen such that $\C-rE_{h+1, h}$
is diagonal  for some $1\leq h< n,  r\in \mbb N$ satisfying ${\rm co}(\C)={\rm ro}(\A)$ ,  then we have
\begin{align}\label{eq57}
\{\C\} \cdot \{\A\}
 =\sum_{t}v^{\beta'(t)}t^{\alpha'(t)}\prod_{u=1}^N
\overline{\begin{pmatrix} a_{h+1, u}+t_u\\ t_u\end{pmatrix}}_{\!\!\!{}v}
\ \{\A(h,  t) \},
\end{align}
where  the sum is  taken over all $t=(t_u)\in \mbb N^N$ such that $\sum_{u=1}^n t_u=r$ and$t_u \leq \A_{h, u}$ for all$u \neq h$.
$, \alpha'(t), \beta'(t)$,  $\A(h,  t) \in \widetilde{\Theta}$ are defined in (\ref{eq24}).

\subsection{The algebra $\mcal U$}\label{sec6}

In this section,  we shall define a new algebra $\mcal U$ in the  completion of $\mcal K$ similar to  ~\cite[Section 5]{BLM90}.

Let $\hat{\mcal K}$ be the $\mbb Q(v, t)$-vector space of all formal sum
$\sum_{\A\in \tilde{\Theta}}\xi_{\A} \{\A\}$ with $\xi_{\A}\in \mbb Q(v, t)$ and  a locally finite property,  i. e. ,
for any ${\mbf t}\in \mbb Z^n$,  the sets $\{\A\in \tilde{\Theta}|{\rm ro}(\A)={\mbf t},  \xi_{\A} \neq 0\}$
and
$\{\A\in \widetilde{\Theta} | {\rm co}(\A)={\mbf t},  \xi_{\A} \neq 0\}$ are finite.
The space $\hat{\mcal K}$ becomes an  associative algebra over $\mbb Q(v, t)$
 when equipped  with  the following multiplication:
$$
\sum_{\A\in \tilde{\Xi}_{\mbf D}} \xi_{\A} \{\A\}   \cdot \sum_{\B \in \tilde{\Xi}_{\mbf D}} \xi_{\B} \{\B\}
=\sum_{\A,  \B} \xi_{\A} \xi_{\B} \{\A\} \cdot \{\B\},
$$
where the product $\{\A\} \cdot \{\B\}$ is taken  in $\mcal K$.

Observe that the algebra $\hat{\mcal K}$ has a unit element $\sum\{\md\}$,  the  summation  of  all diagonal matrices.

We define the following  elements in $\hat{\mcal K}$.
For any nonzero  matrix $\A \in \wt{\Theta}$,
let $\hat{\A}$ be the matrix obtained
by replacing diagonal entries of $\A$ by zeroes.
We set
$$
\Theta^{0}= \{ \hat{\A} | \A\in \wt{\Theta} \}.
$$

For any $\hat{\A}$ in $\Theta^{0}$ and ${\mbf j}=(j_1, \cdots,  j_n)\in \mbb Z^n$,  we define
\begin{equation} \label{1wtA}
\hat{\A} ({\mbf j})=\sum_{\lambda}v^{\lambda_1j_1+\cdots+\lambda_{n}j_{n}}t^{\lambda_1|j_1|+\cdots+\lambda_{n}|j_{n}|}\{ \hat{\A} + D_{\lambda} \}\quad
\end{equation}
where the  sum runs through all $\lambda=(\lambda_i)\in \mbb Z^n$ such that
$\hat{\A} + D_{\lambda} \in \wt{\Theta}$, where $D_{\lambda}$ is the diagonal matrices with diagonal entries$(\lambda_i). $

For $i\in [1, n-1]$,  let
\begin{equation*}
E_i=E_{i, i+1}(0)\quad{\rm and}\quad
F_i=E_{i+1,  i}(0).
\end{equation*}

Let $\mcal U$ be the subalgebra of $\hat{\mcal K}$ generated by $E_i,  F_i,  0(\mbf j)$ for all $i\in [1, n-1]$ and $\mbf j\in \mbb Z^n$.

\begin{prop}\label{prop-a}
The following relations hold in $\mcal U$.
 \allowdisplaybreaks
\begin{eqnarray}
\label{1-i}
&&0(\mbf j)0(\mbf j')=0(\mbf j')0(\mbf j), \\
\label{1-ii}
&\texttt{}&0(\mbf j)E_h=v^{j_h-j_{h+1}}t^{|j_h|-|j_{h+1}|}E_h0(\mbf j), \
       0(\mbf j)F_h=v^{-j_h+j_{h+1}}t^{-|j_h|+|j_{h+1}|}F_h0(\mbf j), \\
 \label{1-iii}
  &&t(E_hF_h-F_hE_h)=(v-v^{-1})^{-1}(0(\underline h-\underline{h+1})-0(\underline{h+1} -\underline h)), \\
\label{1-iv}
  & &E_i^2E_{i+1} - (vt + v^{-1}t)E_iE_{i+1}E_{i} + t^{2}E_{i+1}E_{i}^{2} = 0, \\
  & &t^{2}E_{i+1}^2E_{i} - (vt + v^{-1}t)E_{i+1}E_{i}E_{i+1} + E_{i}E_{i+1}^{2} = 0, \\
  & &F_i^2F_{i+1} - (vt^{-1} + v^{-1}t^{-1})F_iF_{i+1}F_{i} + t^{-2}F_{i+1}F_{i}^{2} = 0, \\
  & &t^{-2}F_{i+1}^2F_{i} - (vt^{-1} + v^{-1}t^{-1})F_{i+1}F_{i}F_{i+1} + F_{i}F_{i+1}^{2} = 0.
\end{eqnarray}
where $\mbf j,  \mbf j'\in \mbb Z^n$,  $h,  i,  j\in [1,  n]$ and  $\underline i \in \mbb N^N$ is the vector whose $i$-th entry is 1 and 0 elsewhere.
\end{prop}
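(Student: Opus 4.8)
The plan is to verify all four families of relations by direct computation inside the completion $\hat{\mcal K}$, using the two multiplication formulas \eqref{eq58} and \eqref{eq57} of the limit algebra $\mcal K$ together with the defining expansion \eqref{1wtA} of the elements $\hat{\A}(\mbf j)$. Every product below is read off by matching the $\ro$ and $\co$ vectors of the relevant matrices and then applying these formulas termwise, which is legitimate because of the local finiteness built into $\hat{\mcal K}$. Throughout I use the fact that a diagonal matrix acts as a local unit on both sides: if $D_\mu$ is diagonal, then $\{D_\mu\}\cdot\{\A\}=\{\A\}$ exactly when $\mu=\ro(\A)$, and $\{\A\}\cdot\{D_\mu\}=\{\A\}$ exactly when $\mu=\co(\A)$, both being the trivial $r=0$ instance of \eqref{eq58}--\eqref{eq57}.

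I would dispatch \eqref{1-i} and \eqref{1-ii} first. Since $0(\mbf j)=\sum_\lambda v^{\lambda\cdot\mbf j}t^{\lambda\cdot|\mbf j|}\{D_\lambda\}$ is supported on diagonal matrices, the local-unit property gives $0(\mbf j)\,0(\mbf j')=\sum_\lambda v^{\lambda\cdot(\mbf j+\mbf j')}t^{\lambda\cdot(|\mbf j|+|\mbf j'|)}\{D_\lambda\}$, which is manifestly symmetric in $\mbf j,\mbf j'$, giving \eqref{1-i}. For \eqref{1-ii}, writing $E_h=\sum_\lambda\{E_{h,h+1}+D_\lambda\}$ and composing with $0(\mbf j)$ on each side, the matching of row and column vectors forces the diagonal index of the surviving term to be $\lambda+\underline{h}$ when $0(\mbf j)$ is on the left and $\lambda+\underline{h+1}$ when it is on the right; comparing the two scalars yields exactly the factor $v^{j_h-j_{h+1}}t^{|j_h|-|j_{h+1}|}$, and the argument for $F_h$ is identical with the roles of $h$ and $h+1$ exchanged.

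The heart of the matter is \eqref{1-iii}. Here I would compute $E_hF_h$ from \eqref{eq58} and $F_hE_h$ from \eqref{eq57}. In each product the row/column constraint forces the two diagonal indices to coincide, and the sum over the vector $t=(t_u)$ with $\sum_u t_u=1$ retains only $t=\underline{h}$ and $t=\underline{h+1}$. In each product exactly one of these two choices yields a purely diagonal matrix ($D_{\lambda+\underline{h}}$ in $E_hF_h$ and $D_{\lambda+\underline{h+1}}$ in $F_hE_h$), carrying a nontrivial binomial factor, while the other yields the off-diagonal matrix $D_\nu+E_{h,h+1}+E_{h+1,h}$ with trivial binomial factor. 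After reindexing the diagonal part $\nu$ so that the two off-diagonal contributions share the same support, the main technical step is to check that their $v^{\beta}t^{\alpha}$ and $v^{\beta'}t^{\alpha'}$ coefficients agree, so that they cancel in $E_hF_h-F_hE_h$. The surviving diagonal terms then recombine---after extracting the $t^{-1}$ that matches the factor $t$ on the left-hand side and the $(v-v^{-1})^{-1}$ coming from the $\alpha,\beta$ weights---exactly into $(v-v^{-1})^{-1}\big(0(\underline{h}-\underline{h+1})-0(\underline{h+1}-\underline{h})\big)$. I expect the careful bookkeeping of the $t$-exponents $\alpha(t),\alpha'(t)$, which is precisely what the two-parameter deformation contributes, to be the main obstacle, both for the off-diagonal cancellation and for producing the correct $t$-power.

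Finally, the quantum Serre relations \eqref{1-iv} are obtained by iterating \eqref{eq58} (resp.\ \eqref{eq57}) to expand the degree-three monomials $E_i^2E_{i+1}$, $E_iE_{i+1}E_i$, $E_{i+1}E_i^2$ (and their $F$-analogues) as explicit $v,t$-weighted sums of basis elements $\{\A\}$, exactly in the spirit of the computation of (R4) in Proposition~\ref{prop3}. All three monomials are supported on the same matrices, so the identity reduces to a scalar check; tracking the $t$-weights shows that the structure constants assemble into the stated combination with coefficients $vt+v^{-1}t$ and $t^{2}$, and with $vt^{-1}+v^{-1}t^{-1}$ and $t^{-2}$ for the $F$-relations. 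Since these are the stabilized counterparts of the identities already established at the level of $\mcal S$, no new phenomenon beyond the $t$-power bookkeeping arises.
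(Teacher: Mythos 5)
Your proposal is correct and follows essentially the same route as the paper's own proof: a direct computation in the completion $\hat{\mcal K}$ using the stabilized multiplication formulas \eqref{eq58} and \eqref{eq57}, in which diagonal matrices act as local units for \eqref{1-i}--\eqref{1-ii}, the off-diagonal terms of $E_hF_h$ and $F_hE_h$ cancel while the diagonal terms recombine through the binomial factors into $(v-v^{-1})^{-1}\bigl(0(\underline h-\underline{h+1})-0(\underline{h+1}-\underline h)\bigr)$ for \eqref{1-iii}, and the Serre relations \eqref{1-iv} reduce to scalar checks on the expansions of the cubic monomials. The paper performs exactly these computations (working out one representative identity in each family and noting the rest are similar), so the two arguments coincide in both structure and substance.
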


\begin{proof}
We show (\ref{1-ii}).
\begin{equation*}
\label{eq64}
\begin{split}
0(\mbf j)E_h
&=\textstyle\sum_{\lambda}v^{\sum \lambda_kj_k}t^{\sum \lambda_k|j_k|}\{D_{\lambda}\}
\sum_{\lambda'}\{E_{h, h+1} + D_{\lambda'}\}\\
&\textstyle =\sum_{\lambda'}v^{\sum \lambda_k'j_k + j_h}t^{\sum \lambda_k'|j_k| + |j_h|}\{E_{h, h+1} + D_{\lambda'}\},
\end{split}
\end{equation*}
where the sums run through in an obvious range by the definition in (\ref{1wtA}).
\begin{equation*}
\begin{split}
E_h 0(\mbf j)
&=\textstyle \sum_{\lambda,  \lambda' }v^{\sum \lambda_kj_k}t^{\sum \lambda_k|j_k|}\{E_{h, h+1} + D_{\lambda'}\}\{D_{\lambda}\}\\
 &=\sum_{\lambda' }v^{\sum \lambda_k'j_k + j_{h+1}}t^{\sum \lambda_k'|j_k| + |j_{h+1}|}\{E_{h, h+1} + D_{\lambda'}\}.
 \end{split}
 \end{equation*}
 So we have the first identity in (\ref{1-ii}).  All other identities in (\ref{1-i}) and (\ref{1-ii}) can be shown similarly.

We show (\ref{1-iii}).
 we have
\begin{equation*}
\begin{split}
E_hF_h
&= \textstyle\sum_{\lambda, \lambda'}\{E_{h, h+1} + D_{\lambda}\}
\{E_{h+1, h} + D_{\lambda'}\}\\
&\textstyle=\sum_{\lambda}\{E_{h, h+1} + D_{\lambda}\}
\{E_{h+1, h} + D_{\lambda}\}\\
&\textstyle=\sum_{\lambda}(v^{\lambda_h - \lambda_{h+1}}t^{\lambda_h + \lambda_{h + 1}}\overline{\begin{pmatrix}\lambda_h+1\\ 1 \end{pmatrix}}_{\!\!\!{}v}\{D_{\lambda} + E_{h, h}\}\\
&+\{E_{h+1, h} + E_{h, h+1} + D_{\lambda} - E_{h+1, h+1}\}).
\end{split}
\end{equation*}
Similarly,
\begin{equation*}
\begin{split}
F_hE_h
&\textstyle=\sum_{\lambda}\{E_{h+1, h} + D_{\lambda}\}
\{E_{h, h+1} + D_{\lambda}\}\\
&\textstyle=\sum_{\lambda}(v^{\lambda_{h+1} - \lambda_{h}}t^{\lambda_h + \lambda_{h + 1}}\overline{\begin{pmatrix}\lambda_{h+1}+1\\ 1 \end{pmatrix}}_{\!\!\!{}v}\{D_{\lambda} + E_{h+1, h+1}\}\\
&+\{E_{h+1, h} + E_{h, h+1} + D_{\lambda} - E_{h, h}\}).
\end{split}
\end{equation*}
Therefore,
\begin{equation*}
\begin{split}
t(E_hF_h - F_hE_h)
&\textstyle=\sum_{\lambda} \frac{v^{\lambda_h - \lambda_{h + 1} }t^{\lambda_h + \lambda_{h + 1}}-v^{\lambda_{h+1}- \lambda_{h}}t^{\lambda_h +\lambda_{h + 1}}}{v-v^{-1}}\{D_{\lambda}\}\\
&\textstyle=(v-v^{-1})^{-1}(0(\underline h-\underline{h+1})-0(\underline{h+1} -\underline h)).
\end{split}
\end{equation*}
At last,  We show (\ref{1-iv}).
\begin{equation*}
\begin{split}
E_h^2E_{h+1}
&\textstyle=\sum_{\lambda} vt(v^{-2} + 1)\{D_{\lambda} + E_{h, h+1} + E_{h, h+2}\} \\&+ \sum_{\lambda} v^{-1}t^3(v^{-2} + 1)\{D_{\lambda} + E_{h+1, h+2} + 2E_{h, h+1}\};
\end{split}
\end{equation*}
\begin{equation*}
\begin{split}
E_hE_{h+1}E_h
&\textstyle=\sum_{\lambda} t^2(v^{-2} + 1)\{D_{\lambda} + 2E_{h, h+1} + E_{h, h+2}\} \\&+ \sum_{\lambda}\{D_{\lambda} + E_{h, h+1} + E_{h, h+2}\};
\end{split}
\end{equation*}

\begin{equation*}
\begin{split}
E_{h+1}E_h^2
&\textstyle=\sum_{\lambda} vt(v^{-2} + 1)\{D_{\lambda} + 2E_{h, h+1} + E_{h+1, h+2}\}.
\end{split}
\end{equation*}
Then the first identity of \ref{1-iv} follows.  all other identities can be shown similarly.

\end{proof}

The Corollary  directly follows.
\begin{cor}
The assignment $E_i\mapsto tE_i$,  $F_i\mapsto F_i$,  $A_a \mapsto 0(\underline a)$
 and $B_a \mapsto 0(-\underline a)$,   for any $i\in [1, n-1]$,  $a\in [1,  n]$ ,
 defines a  algebra isomorphism $\Upsilon: U_{v, t}(gl_n)\rightarrow \mcal U$ .
\end{cor}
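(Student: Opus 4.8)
The plan is to establish the isomorphism $\Upsilon\colon U_{v,t}(gl_n)\to\mcal U$ by showing it is a well-defined surjective algebra homomorphism between two algebras of the same rank, and then invoking a dimension/basis count. The first step is to verify that $\Upsilon$ respects the defining relations. By Theorem~\ref{thm14} (or equivalently the presentation in the Definition together with Lemma above), it suffices to check that the images $tE_i$, $F_i$, $0(\underline a)$, $0(-\underline a)$ satisfy the relations (R1)--(R4) that define $U_{v,t}(gl_n)$. The relations needed on the target side have already been assembled in Proposition~\ref{prop-a}: relation~(\ref{1-i}) gives the commutativity of the torus part (R1), relation~(\ref{1-ii}) encodes the $A_a,B_a$ conjugation action on $E_h,F_h$ (R2), relation~(\ref{1-iii}) is exactly the commutator relation (R3) after the rescaling $E_i\mapsto tE_i$, and relations~(\ref{1-iv}) are the quantum Serre relations (R4). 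So the main work of this step is bookkeeping: confirm that under the substitution $E_i\mapsto tE_i$ the factor of $t$ converts the $\mcal U$-relations of Proposition~\ref{prop-a} into precisely the $\ast$-relations (R$^*$1)--(R$^*$4), matching the coefficients $v^{\langle i,j\rangle}$, $t^{|j_h|-|j_{h+1}|}$, and the Serre coefficients $vt^{\pm1}+v^{-1}t^{\pm1}$.

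\textbf{Well-definedness and surjectivity.} Once the relations are checked, $\Upsilon$ extends uniquely to an algebra homomorphism out of $U_{v,t}(gl_n)$, since that algebra is presented by generators and relations. Surjectivity is essentially immediate from the construction of $\mcal U$: by definition $\mcal U$ is the subalgebra of $\hat{\mcal K}$ \emph{generated} by $E_i$, $F_i$, and $0(\mbf j)$ for all $i\in[1,n-1]$ and $\mbf j\in\mbb Z^n$. Each generator lies in the image of $\Upsilon$: $E_i$ and $F_i$ are hit directly (up to the unit $t$), and every $0(\mbf j)$ is a product of the $0(\pm\underline a)=\Upsilon(A_a^{\pm1}\text{ or }B_a^{\pm1})$ since $\mbf j=\sum_a j_a\underline a$ and the $0(\underline a)$ commute by~(\ref{1-i}). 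Hence $\Upsilon$ is surjective.

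\textbf{Injectivity.} The remaining and genuinely substantive point is injectivity, and this is where I expect the main obstacle to lie. The cleanest route is a basis-comparison argument in the style of the monomial/PBW bases: one exhibits a spanning set of $\mcal U$ indexed by the same combinatorial data as a known basis of $U_{v,t}(gl_n)$, and argues that $\Upsilon$ carries one to the other. Concretely, Corollary~\ref{S-M} and Theorem~\ref{thm1} supply, in the finite-rank algebra $\mcal S$, a basis $\{\m_\A\}$ of monomials in the generators indexed by $\A\in\Theta_{\mbf d}$; the stabilization of Proposition~\ref{prop5} lifts these to monomials $\hat{\A}(\mbf j)$ in $\mcal U$, and one shows via the triangularity (\ref{eq58})--(\ref{eq57}) (the ``$\{M\}+$ lower terms'' structure) that the images under $\Upsilon$ of the standard monomial basis of $U_{v,t}(gl_n)$ are linearly independent in $\mcal U$. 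The hard part will be controlling this triangularity uniformly across all stabilized degrees $p$ and transferring the linear independence from each finite $\mcal S_d$ up to the limit algebra $\mcal K$ and its completion $\hat{\mcal K}$; this is precisely the argument that Beilinson--Lusztig--MacPherson carry out in \cite{BLM90}, and the natural strategy is to adapt it step by step, the only new feature being the extra parameter $t$ and the $t$-twists $\alpha(t),\alpha'(t)$ recorded in Proposition~\ref{cor5}. Since those twists are already incorporated into the structure constants, the BLM injectivity argument should go through mutatis mutandis, giving that $\Upsilon$ is a bijection and hence an isomorphism.
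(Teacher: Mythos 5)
The paper's own ``proof'' of this corollary is a single sentence: it is declared to follow directly from Proposition \ref{prop-a}, i.e.\ from the fact that $tE_i$, $F_i$, $0(\underline a)$, $0(-\underline a)$ satisfy the defining relations of $U_{v,t}(gl_n)$ inside $\mcal U$. Your first step reproduces exactly that relation check, and you are right that by itself it only yields a homomorphism. Everything else you do --- the explicit surjectivity argument, and the recognition that injectivity is the genuinely substantive point, to be handled by adapting the stabilization/triangularity machinery of \cite{BLM90} (Theorem \ref{thm1}, Corollary \ref{S-M}, Proposition \ref{prop5}) --- is strictly more than the paper offers, since the paper never addresses bijectivity at all. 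So your route subsumes the paper's and is the more honest one; the price is that your injectivity step remains an outline (``mutatis mutandis'') rather than a proof, which is acceptable as a strategy statement but is exactly the part that would require real work.

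One concrete point needs repair, and it is a place where the two-parameter twist genuinely differs from BLM. Because of the absolute values $|j_k|$ in the $t$-exponents of (\ref{1wtA}), the elements $0(\mbf j)$ are \emph{not} additive in $\mbf j$: one computes $0(\underline a)\,0(-\underline a)=\sum_{\lambda} t^{2\lambda_a}\{D_{\lambda}\}\neq 1$. Hence your parenthetical identification ``$0(\pm\underline a)=\Upsilon(A_a^{\pm1}\text{ or }B_a^{\pm1})$'' cannot be taken literally: relation (R1) forces $\Upsilon(A_a^{-1})$ to be the two-sided inverse of $0(\underline a)$, namely $\sum_{\lambda}(vt)^{-\lambda_a}\{D_{\lambda}\}$, which is not of the form $0(\mbf j)$ for any $\mbf j$, and it is not obvious that it lies in $\mcal U$ as defined (the subalgebra generated by $E_i$, $F_i$, $0(\mbf j)$). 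Your surjectivity argument survives, because it only uses $\Upsilon(A_a)=0(\underline a)$ and $\Upsilon(B_a)=0(-\underline a)$ together with the identity $0(\mbf j)0(\mbf j')=0(\mbf j+\mbf j')$ valid when $j_kj_k'\geq 0$ for all $k$ --- note that this sign-compatibility, not mere commutativity as you wrote, is what the decomposition $\mbf j=\sum_{j_a>0}j_a\underline a+\sum_{j_a<0}(-j_a)(-\underline a)$ actually requires. But well-definedness of $\Upsilon$ on the inverses $A_a^{-1},B_a^{-1}$ is a gap: to close it one must either show these inverses already lie in $\mcal U$ or enlarge $\mcal U$ to include them. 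In fairness, the paper's statement and its one-line proof have exactly the same gap.
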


\section{Schur dualities for two parameter case of type $\mbf A_d$}

In this section,  we shall formulate algebraically
the dualities between algebras $U_{v, t}(gl_n)$ and the two parameter Iwahori-Hecke algebras $H_d(v, t)$of type $\mbf A_{d}$.

Let $\mbf V$ be a vector space over $\mbb Q(v,  t)$ of dimension $n$.
We fix a basis $( \mbf v_i)_{ 1\leq i\leq n}$ for $\mbf V$.
Let $\mbf V^{\otimes d}$ be the $d$-th tensor space of $\mbf V$.
Thus we have a basis
$(\mbf v_{r_1}\otimes \cdots\otimes \mbf v_{r_d})$,   where $r_1, \cdots,  r_d\in [1,  n]$,   for the tensor space $\mbf V^{\otimes d}$.

For a sequence $\mbf r=(r_1, \cdots,  r_d)$,  we write $\mbf v_{\mbf r}$ for $\mbf v_{r_1}\otimes \cdots\otimes \mbf v_{r_d}$.

For a sequence $\mbf r$ and a fixed integer $p\in [1,  d]$,  we define  the sequence $\mbf r'_p$ and $\mbf r''_p$ by
\[
(\mbf r'_p)_j=
\begin{cases}
r_j,  & j\neq p, \\
r_p -1,  & j=p
\end{cases}
\quad
\mbox{and}
\quad
(\mbf r''_p)_j=
\begin{cases}
r_j,  & j\neq p, \\
r_p +1,  & j=p,
\end{cases}
\]

\begin{lem}
\label{U-action-V}
There has  a left  $\U_{v,  t}(gl_n)$-action on $\mbf V^{\otimes d}$ defined by,  for any $i\in [1,  n - 1]$,  $a\in [1,  n]$,
 \begin{align}
 E_i \cdot \mbf v_{\mbf r}  &=\sum_{1\leq p\leq d:r_p=i+1}
v^{\sum_{j>p}\delta_{i,  r_j} - \delta_{i + 1,  r_j}}t^{1+\sum_{j>p}\delta_{i,  r_j} + \delta_{i + 1,  r_j}} \mbf v_{\mbf r'_p},   \nonumber \\
F_i \cdot \mbf v_{\mbf r}  &=\sum_{1\leq p\leq d:r_p=i}
v^{\sum_{j<p}\delta_{i+1,  r_j} - \delta_{i,  r_j}}t^{\sum_{j<p}\delta_{i,  r_j} + \delta_{i + 1,  r_j}} \mbf v_{\mbf r''_p},   \nonumber \\
A_a^{\pm 1} \cdot \mbf{v_r}  &=v^{\pm \sum_{1\leq j\leq d} \delta_{a,  r_j}} t^{\pm \sum_{1\leq j\leq d} \delta_{a,  r_j}} \mbf{v_r},  \nonumber\\
B_a^{\pm 1} \cdot \mbf{v_r}  &=v^{\mp \sum_{1\leq j\leq d} \delta_{a,  r_j}} t^{\pm \sum_{1\leq j\leq d} \delta_{a,  r_j}} \mbf{v_r}.  \nonumber
\end{align}
\end{lem}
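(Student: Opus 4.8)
The plan is to verify that the formulas for the generators satisfy the defining relations (R1)--(R4) of $U_{v, t}(gl_n)$, and the cleanest route is to recognize that this action is nothing but a transport of the $\mcal S$-action on $\mcal V$ already computed in Corollary~\ref{cor9}. First I would set up the dictionary $\mbf v_{\mbf r} \leftrightarrow \{e_{r_1\cdots r_d}\}$, identifying $\mbf V^{\otimes d}$ with $\mcal V=\mcal A_G(\mscr X\times \mscr Y)$ via the basis correspondence $\mbf r = (r_1,\dots,r_d) \leftrightarrow B \in \Pi$ recorded just before Lemma~\ref{H-action}. Under this identification the displayed formulas for $E_i$, $F_i$, $A_a^{\pm1}$, $B_a^{\pm1}$ acting on $\mbf v_{\mbf r}$ coincide verbatim with the convolution formulas of Corollary~\ref{cor9} (the only cosmetic point being that the summation condition $r_p = i+1$ in the $E_i$-formula matches the condition $V_i \overset{1}{\supset} V_i'$ lowering the $i$-th step). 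Since $\mcal S$ is an associative algebra and $\mcal V$ is a genuine left $\mcal S$-module by construction in the geometric setting, the transported action is automatically a well-defined associative action.

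To conclude that this yields a $U_{v, t}(gl_n)$-action, I would invoke Corollary~\ref{S-generator}, which states that $\mbb Q(v, t)\otimes_{\mcal A}\mcal S$ is generated by exactly the functions $E_i$, $F_i$, $A_a^{\pm1}$, $B_a^{\pm1}$, together with Proposition~\ref{prop3}, which shows these functions satisfy relations (R1)--(R4) (indeed (R1)--(R7)) in $\mcal S$. Consequently there is a surjective algebra homomorphism $U_{v, t}(gl_n)\to \mbb Q(v, t)\otimes_{\mcal A}\mcal S$ sending generators to generators, and composing with the module structure map $\mbb Q(v, t)\otimes_{\mcal A}\mcal S \to \End(\mcal V)$ produces the asserted left $U_{v, t}(gl_n)$-action. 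Thus the content of the lemma is that the abstract relations are respected, and this follows because they already hold at the level of the convolution algebra $\mcal S$ acting on $\mcal V$.

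Alternatively, if one prefers a self-contained verification not routed through $\mcal S$, I would check relations (R1)--(R4) directly on an arbitrary basis vector $\mbf v_{\mbf r}$. Relations (R1) and the Cartan-type relations (R2) are immediate since $A_a$, $B_a$ act diagonally with eigenvalue exponents $\pm\sum_j \delta_{a, r_j}$, so conjugating $E_i$ or $F_i$ simply shifts the relevant multiplicities by $\pm 1$ at positions $i$ and $i+1$; the powers of $v$ and $t$ bookkeep to exactly $v^{\langle i, j\rangle}t^{\langle i, j\rangle}$ and its variants. Relation (R3) is the essential commutator computation: applying $E_iF_i - F_iE_i$ to $\mbf v_{\mbf r}$ and comparing with the diagonal operator $\frac{A_iB_{i+1}-B_iA_{i+1}}{v-v^{-1}}$ reduces, after the telescoping of the positional exponents $\sum_{j<p}$ and $\sum_{j>p}$, to the same identity verified in the $\mcal S$ setting in the proof of Proposition~\ref{prop3}.

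The main obstacle I anticipate is the bookkeeping of the $v$- and $t$-exponents in relation (R3) and in the quantum Serre relations (R4). The $t$-powers in particular are delicate because, unlike the one-parameter case, the exponents $t^{1+\sum_{j>p}(\cdots)}$ and $t^{\sum_{j<p}(\cdots)}$ carry absolute-value-free contributions that must cancel correctly when $E_i$ and $F_i$ are composed in both orders; getting the telescoping signs and the normalization constant $t^2$ in $T_i^2$ to align with the defining $t$-dependence of (R3) and (R4) is where care is needed. This is precisely why routing through Proposition~\ref{prop3}, where these very exponent identities were already established geometrically, is the preferable strategy: it lets me inherit the hard cancellations rather than redo them.
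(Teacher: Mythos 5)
Your proposal is correct and follows essentially the same route as the paper: the paper's proof consists of citing Proposition~\ref{prop3} (the relations hold for the convolution functions in $\mcal S$) together with Corollary~\ref{cor9} (the explicit $\mcal S$-action on $\mcal V$), which is exactly your main argument of transporting the geometric action through the basis identification $\mbf v_{\mbf r}\leftrightarrow \{e_{r_1\cdots r_d}\}$. Your additional appeal to Corollary~\ref{S-generator} is harmless but not needed merely to define the action, and your proposed direct verification of (R1)--(R4) is an unnecessary fallback.
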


The lemma  follows
Proposition ~\ref{prop3},   and Corollary ~\ref{cor9}.

\begin{lem}

There has  a right $\Hd$-action on $\mbf V^{\otimes d}$ given by,  for $1\leq j\leq d-1$,
\begin{eqnarray}
&\mbf v_{r_1 \dots r_{d}} T_j =
\begin{cases}
\mbf v_{r_1\dots r_{j-1} r_{j+1} r_j\dots r_{d}} ,   &  r_j < r_{j+1};\\
vt \mbf v_{r_1\dots r_{d}},  &  r_j = r_{j+1};\\
(vt-v^{-1}t) \mbf v_{r_1\dots r_{d}} + t^2 \mbf v_{r_1 \dots r_{j-1} r_{j+1} r_j  \dots   r_{d}},  & r_j> r_{j+1}.
\end{cases}
\end{eqnarray}
\end{lem}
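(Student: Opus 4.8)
The plan is to obtain this right action by transport of structure from the geometric realization already in place, rather than by a direct relation check. First I would observe that the $\mcal A$-linear map $\mcal V \to \mbf V^{\otimes d}$ sending each normalized basis element $\{e_{r_1\cdots r_d}\}$ to $\mbf v_{\mbf r}=\mbf v_{r_1}\otimes\cdots\otimes\mbf v_{r_d}$ becomes an isomorphism after extending scalars to $\mbb Q(v,t)$, since by the parametrization of $\Pi$ by $\tilde{\mbf r}$ both $\mbb Q(v,t)\otimes_{\mcal A}\mcal V$ and $\mbf V^{\otimes d}$ are free with bases indexed by the same set of sequences $(r_1,\dots,r_d)\in[1,n]^d$. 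Under this identification, the right $\mcal H_{\mscr Y}$-action on $\mcal V$ recorded in Lemma~\ref{H-action} by formula (\ref{1}) is carried verbatim onto the three-case formula prescribed for $\mbf v_{\mbf r}T_j$. Because $\mcal V$ is a genuine right $\mcal H_{\mscr Y}$-module (its action comes from the associative convolution product) and $\mcal H_{\mscr Y}\cong\Hd$ by Lemma~\ref{Geometric-Hecke}, the transported action is automatically a well-defined right $\Hd$-module structure, and no defining relation of $\Hd$ needs to be re-verified. This is the same mechanism by which the left $\U_{v,t}(gl_n)$-action in Lemma~\ref{U-action-V} was read off from Corollary~\ref{cor9}.

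Should one instead want a self-contained verification, I would check directly that the operators $T_j$ given by the three-case formula satisfy the defining relations of $\Hd$ on the basis $(\mbf v_{\mbf r})$. The quadratic relation $T_j^2=(vt-v^{-1}t)T_j+t^2$ is handled by applying the formula twice and splitting into the cases $r_j<r_{j+1}$, $r_j=r_{j+1}$, $r_j>r_{j+1}$; the powers of $t$ are arranged precisely so that the two sides match. The far-commutativity $T_iT_j=T_jT_i$ for $|i-j|>1$ is immediate, since $T_i$ and $T_j$ modify disjoint pairs of adjacent tensor slots and the exponent sums $\sum_{k}\delta_{\bullet,r_k}$ defining the coefficients are unchanged by a swap in a nonadjacent position.

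The main obstacle in the direct approach is the braid relation $T_jT_{j+1}T_j=T_{j+1}T_jT_{j+1}$, which reduces to a local computation on the triple $(r_j,r_{j+1},r_{j+2})$ and splits into several cases according to the relative order and the possible equalities among these three indices; in each case one must simultaneously track the permutation of the entries and the accumulated powers of $v$ and $t$. This is exactly the two-parameter refinement of the type-$\mbf A$ computation in ~\cite[1. 12]{GL92}, the only genuinely new feature being the bookkeeping of the $t$-exponents. For this reason I would present the transport-of-structure argument above, which bypasses the case analysis entirely by invoking Lemma~\ref{H-action} together with Lemma~\ref{Geometric-Hecke}.
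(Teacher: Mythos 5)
Your transport-of-structure argument is exactly the paper's proof: the paper deduces this lemma directly from Lemma~\ref{Geometric-Hecke} and Lemma~\ref{H-action}, identifying $\mbf V^{\otimes d}$ with $\mbb Q(v,t)\otimes_{\mcal A}\mcal V$ via $\{e_{r_1\cdots r_d}\}\mapsto \mbf v_{\mbf r}$ and carrying over the geometric $\mcal H_{\mscr Y}$-action. Your additional sketch of a direct verification of the Hecke relations is a reasonable alternative but is not needed, and you correctly chose to present the same mechanism the paper uses.
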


This lemma follows  Lemmas ~\ref{Geometric-Hecke} and  ~\ref{H-action}.

We now can state the  duality.

\begin{prop}
The left $U_{v, t}(gl_n)$-action in Lemma ~\ref{U-action-V} and the right $\Hd$-action in Lemma ~\ref{H-action}  on $\mbf V^{\otimes d}$ are commuting.   They form a double centralizer for $n\geq d$,  i. e. ,
$$
\Hd \simeq \End_{\U} (\mbf V^{\otimes d})
\quad
\mbox{and}
\quad
U_{v, t}(gl_n) \rightarrow \End_{\Hd} (\mbf V^{\otimes d}) \ is \  surjective .
$$
\end{prop}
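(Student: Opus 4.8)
The plan is to prove the proposition by transporting everything through the geometric model, where the double centralizer property is already available, rather than verifying commutation and double-centralizer directly from the explicit formulas. The crucial observation is that the algebras and the module in the statement are \emph{isomorphic copies} of objects already studied geometrically. Specifically, by Lemma~\ref{U-action-V} together with Corollary~\ref{cor9}, the $U_{v,t}(gl_n)$-action on $\mbf V^{\otimes d}$ coincides, under the identification $\mbf v_{\mbf r}\leftrightarrow \{e_{r_1\cdots r_d}\}$, with the $\mcal S$-action on $\mcal V$ composed with the surjection $U_{v,t}(gl_n)\to \mcal S$ coming from Proposition~\ref{prop3} and Corollary~\ref{S-generator}. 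Likewise, by Lemma~\ref{Geometric-Hecke} and Lemma~\ref{H-action}, the right $\Hd$-action on $\mbf V^{\otimes d}$ matches the right $\mcal H_{\mscr Y}$-action on $\mcal V$ under the same basis identification. So the first step is to record this dictionary precisely and thereby identify $\mbf V^{\otimes d}$ (over $\mbb Q(v,t)$) with $\mbb Q(v,t)\otimes_{\mcal A}\mcal V$ as an $(\mcal S,\mcal H_{\mscr Y})$-bimodule.

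Once the dictionary is in place, the commutativity of the two actions is \emph{inherited for free}: in the geometric setting the left $\mcal S$-action and the right $\mcal H_{\mscr Y}$-action on $\mcal V$ commute because both are defined by convolution on function spaces over $G$-orbits (associativity of convolution on $\mscr X\times\mscr X$, $\mscr X\times\mscr Y$, $\mscr Y\times\mscr Y$), as already asserted in the preliminaries. Thus the second step is simply to invoke that the left $\mcal S$- and right $\mcal H_{\mscr Y}$-actions commute, and pull this back through the isomorphisms of the first step to conclude that the $U_{v,t}(gl_n)$- and $\Hd$-actions commute on $\mbf V^{\otimes d}$.

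For the double centralizer statement, the plan is to apply Lemma~\ref{eq34}, which (for $n\geq d$) gives $\End_{\mcal H_{\mscr Y}}(\mcal V)\simeq \mcal S$ and $\End_{\mcal S}(\mcal V)\simeq \mcal H_{\mscr Y}$. After base change to $\mbb Q(v,t)$, the first isomorphism reads $\End_{\Hd}(\mbf V^{\otimes d})\simeq \mbb Q(v,t)\otimes_{\mcal A}\mcal S$; combining with the surjection $U_{v,t}(gl_n)\twoheadrightarrow \mbb Q(v,t)\otimes_{\mcal A}\mcal S$ from Corollary~\ref{S-generator} yields the required surjectivity of $U_{v,t}(gl_n)\to \End_{\Hd}(\mbf V^{\otimes d})$. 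The second isomorphism gives $\End_{\U}(\mbf V^{\otimes d})\simeq \mbb Q(v,t)\otimes_{\mcal A}\mcal H_{\mscr Y}\simeq \Hd$, using Lemma~\ref{Geometric-Hecke}; here one should note that $\End_{\mcal S}(\mcal V)=\End_{U_{v,t}(gl_n)}(\mcal V)$ because $U_{v,t}(gl_n)$ acts through its image $\mcal S$, so enlarging the acting algebra does not shrink the commutant.

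The main obstacle I expect is the compatibility of base change with the two $\End$ functors, i.e.\ justifying that $\mbb Q(v,t)\otimes_{\mcal A}\End_{\mcal H_{\mscr Y}}(\mcal V)\simeq \End_{\Hd}(\mbb Q(v,t)\otimes_{\mcal A}\mcal V)$ and similarly on the other side. This is the only non-formal point: since $\mcal V$ is a free $\mcal A$-module (it has the orbit basis $\{e_{r_1\cdots r_d}\}$) and $\mathbb Q(v,t)$ is a flat, indeed a localization-type, $\mcal A=\mbb Z[v^{\pm1},t^{\pm1}]$-algebra, one argues that taking homomorphisms commutes with this base change on the relevant finitely generated free modules, so Lemma~\ref{eq34} survives specialization. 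The remaining verifications—the explicit equality of action formulas establishing the dictionary of the first step—are routine term-by-term comparisons between Lemma~\ref{U-action-V}/Lemma~\ref{H-action} and Corollary~\ref{cor9}/Lemma~\ref{H-action}, and need only be remarked upon rather than carried out in full.
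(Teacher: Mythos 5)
Your proposal follows exactly the paper's route: the paper's proof is the one-line assertion that the proposition follows from the two action lemmas, the geometric double centralizer property (Lemma~\ref{eq34}), Proposition~\ref{prop3}, and Corollary~\ref{S-generator}, which are precisely the ingredients you assemble. Your additional care about base change $\mcal A\to\mbb Q(v,t)$ commuting with $\End$ on the free module $\mcal V$ merely fills in a detail the paper leaves implicit, so this is the same argument, just written out.
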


The proposition follows from  the previous two lemmas,  Lemma ~\ref{eq34},   Proposition ~\ref{prop3} and Corollary ~\ref{S-generator}.

\subsection{Galois descend approach}

Let $G = Gal(\mathbb{Q}(v, t)/\mathbb{Q}(r, s))$, $r=vt, s=v^{-1}t. $ It is easy to know \ $G\cong S_2 $ \ which is generated by $\sigma$.  $G$ act on $U_{v, t}(gl_n)$ given by a $\mathbb{Q}$ algebra homomorphism $\sigma :U_{v, t}(gl_n)\rightarrow U_{v, t}(gl_n)$; $E_i \mapsto -E_i,  F_i \mapsto F_i,  K_i \mapsto K_i, K_i' \mapsto K_i', v \mapsto -v, t \mapsto -t. $ $G$ can be also act on $V^{\otimes k}$ which is given by $\sigma :V^{\otimes k}\rightarrow  V^{\otimes k}$; $v_{i_{1}}\otimes \cdots \otimes v_{i_{k}}\mapsto v_{i_{1}}\otimes \cdots \otimes v_{i_{k}},  v\mapsto -v,  t \mapsto -t. $ By the directly compute.  we have the following lemma.

\begin{lem}
The $G$-actions on$(U_{v, t}(gl_n),  V^{\otimes k})$ is compatible. That is $\sigma(av) = \sigma(a)\sigma(v)$, $\forall a \in U_{v, t}(gl_n),  v \in V$.
\end{lem}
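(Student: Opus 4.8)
The plan is to verify the intertwining identity $\sigma(a\cdot x)=\sigma(a)\cdot\sigma(x)$ (for $a\in U_{v,t}(gl_n)$ and $x\in\mbf V^{\otimes d}$) by reducing to the case where $a$ is a single algebra generator $E_i$, $F_i$, $A_a^{\pm 1}$, $B_a^{\pm 1}$ and $x=\mbf v_{\mbf r}$ is a basis vector, and then checking each case directly against the explicit action formulas of Lemma~\ref{U-action-V}. First I would record the structural facts: $\sigma$ is a ring automorphism of $U_{v,t}(gl_n)$ which is $\mathbb Q$-semilinear over $\mathbb Q(v,t)$ (so $\sigma(\lambda a)=\sigma(\lambda)\sigma(a)$ with $\sigma(v)=-v$, $\sigma(t)=-t$), while on $\mbf V^{\otimes d}$ it is the $\mathbb Q$-semilinear map fixing every $\mbf v_{\mbf r}$. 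These are the unique signs allowed, since $\sigma$ is the nontrivial element of $\mathrm{Gal}(\mathbb Q(v,t)/\mathbb Q(vt,v^{-1}t))$ and the relations $r=vt$, $s=v^{-1}t$ force $\sigma(v)=-v$, $\sigma(t)=-t$ simultaneously.

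Writing a general element $a=\sum_I\lambda_I m_I$ as a $\mathbb Q(v,t)$-combination of monomials $m_I$ in the generators and $x=\sum_{\mbf r}c_{\mbf r}\mbf v_{\mbf r}$, the semilinearity of $\sigma$ on both factors together with the $\mathbb Q(v,t)$-bilinearity of the action reduces the identity to the monomial statement $\sigma(m_I\cdot\mbf v_{\mbf r})=\sigma(m_I)\cdot\mbf v_{\mbf r}$. This statement is multiplicative in $m_I$: if it holds for $a$ and for $b$ on every basis vector, then
\[
\sigma(ab\cdot x)=\sigma\bigl(a\cdot(b\cdot x)\bigr)=\sigma(a)\,\sigma(b\cdot x)=\sigma(a)\sigma(b)\,\sigma(x)=\sigma(ab)\,\sigma(x),
\]
using associativity of the action and multiplicativity of $\sigma$ on $U_{v,t}(gl_n)$. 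Hence it suffices to treat the case of a single generator acting on $\mbf v_{\mbf r}$.

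For each generator I would compute the total parity $\alpha+\beta$ of the monomial $v^\alpha t^\beta$ occurring in every term of the formula in Lemma~\ref{U-action-V}, since $\sigma$ scales such a term by $(-1)^{\alpha+\beta}$, and then compare with the sign by which $\sigma$ scales the generator. For $E_i$ one has $\alpha=\sum_{j>p}(\delta_{i,r_j}-\delta_{i+1,r_j})$ and $\beta=1+\sum_{j>p}(\delta_{i,r_j}+\delta_{i+1,r_j})$, so $\alpha+\beta=1+2\sum_{j>p}\delta_{i,r_j}$ is odd and $\sigma$ multiplies each term by $-1$, matching $\sigma(E_i)=-E_i$. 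For $F_i$ one finds $\alpha+\beta=2\sum_{j<p}\delta_{i+1,r_j}$ is even, matching $\sigma(F_i)=F_i$. For $A_a^{\pm1}$ (resp.\ $B_a^{\pm1}$) the two exponents are $\pm s,\pm s$ (resp.\ $\mp s,\pm s$) with $s=\sum_j\delta_{a,r_j}$, so $\alpha+\beta$ is even in both cases, matching the fact that $\sigma$ fixes the Cartan generators. In each case the parity of $\alpha+\beta$ exactly cancels or reproduces the prescribed sign, giving $\sigma(\mathrm{gen}\cdot\mbf v_{\mbf r})=\sigma(\mathrm{gen})\cdot\mbf v_{\mbf r}$.

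The computation is entirely routine parity bookkeeping; the only point that carries genuine content—and which I regard as the heart of the lemma—is that the two $G$-actions are rigged so that the \emph{combined} $v$- and $t$-degree $\alpha+\beta$ of every structure constant has a fixed parity, and that this parity matches the sign $\sigma$ attaches to the corresponding generator. I do not expect any serious obstacle beyond organizing this reduction carefully so that the $\mathbb Q$-semilinearity (rather than $\mathbb Q(v,t)$-linearity) of $\sigma$ is tracked on the scalar coefficients as well as on the module, which is precisely what makes the verification on generators suffice.
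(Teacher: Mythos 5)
Your proposal is correct and takes essentially the same approach as the paper: reduce the identity $\sigma(a\cdot x)=\sigma(a)\sigma(x)$ to the case of a single generator acting on a basis vector and verify it against the explicit action formulas of Lemma~\ref{U-action-V}. The paper dismisses this verification as obvious, whereas you carry out the parity bookkeeping of $v^{\alpha}t^{\beta}$ explicitly (odd total degree for $E_i$, even for $F_i$, $A_a^{\pm1}$, $B_a^{\pm1}$), and your computations agree with the formulas in the paper.
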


\begin{proof}
We only need to check  the identities $\sigma(av) = \sigma(a)\sigma(v)$ on the generators.  By the lemma \ref{U-action-V}.  The result is obvious.
\end{proof}

Though the above lemma we know there is a $G$-action on $H_k(v, t)$ which is given by $\sigma : H_k(v, t) \mapsto H_k(v, t)$;$T_i \mapsto T_i,  v \mapsto -v,  t \mapsto - t$.

\begin{thm}
$(U_{v, t}(gl_n)^G,  {V^{\otimes k}}^G, H_k(v, t)^G)$ is a shur-weyl tripple. and $U_{v, t}(gl_n)^G \cong  U_{r, s}(gl_n)$,  ${V^{\otimes k}}^G$ is a $n^k$ dimension vector space over $\mathbb{Q}(r, s)$,  $H_k(v, t)^G \cong H_k(r, s)$.
\end{thm}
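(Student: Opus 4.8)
The plan is to establish the theorem by combining three separate pieces: the Galois-descent compatibility of the actions (already furnished by the preceding lemma), the explicit identification of the fixed-point subalgebras, and a general double-centralizer descent argument. First I would set up the Galois framework carefully. Since $r = vt$ and $s = v^{-1}t$, the extension $\mathbb{Q}(v,t)/\mathbb{Q}(r,s)$ is generated over $\mathbb{Q}(r,s)$ by adjoining $v$, and one checks $\mathbb{Q}(v,t) = \mathbb{Q}(r,s)(v)$ with $v^2 = rs^{-1}\cdot(\text{something})$; the nontrivial element $\sigma$ acts by $v \mapsto -v$, $t \mapsto -t$ (which indeed fixes $r=vt$ and $s=v^{-1}t$). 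The key structural observation is that all three objects $U_{v,t}(gl_n)$, $\mathbf{V}^{\otimes k}$, and $H_k(v,t)$ carry semilinear $G$-actions for which the action maps are $G$-equivariant, so that taking $G$-invariants is a functorial operation compatible with module structures.

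Next I would identify the three fixed-point spaces explicitly. For the Hecke algebra, $\sigma$ fixes each $T_i$ and sends $v\mapsto -v$, $t\mapsto -t$; since $vt - v^{-1}t = r - s$ and $t^2 = rs$ are both $\sigma$-fixed, the defining relation $T_i^2 = (r-s)T_i + rs$ descends, and $H_k(v,t)^G$ is precisely the $\mathbb{Q}(r,s)$-span of monomials in the $T_i$, giving $H_k(v,t)^G \cong H_k(r,s)$. For the tensor space, $\sigma$ fixes each basis vector $\mathbf{v}_{\mathbf r}$ and scales scalars by the Galois action, so $(\mathbf{V}^{\otimes k})^G$ is the $\mathbb{Q}(r,s)$-span of the $\mathbf{v}_{\mathbf r}$, an $n^k$-dimensional $\mathbb{Q}(r,s)$-vector space. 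For $U_{v,t}(gl_n)$, I would rewrite the defining relations (R1)--(R4) of the opening definition in terms of the $\sigma$-invariant generators $E_i' = \sqrt{-1}\,E_i$ or more simply track which monomials survive: the relevant exponents $v^{\langle i,j\rangle}t^{\langle i,j\rangle} = r^{\langle i,j\rangle}$ etc. are already $\sigma$-fixed, while the sign twist $E_i \mapsto -E_i$ is designed exactly so that the invariant subalgebra matches the presentation of $U_{r,s}(gl_n)$; carrying out this bookkeeping on the generators yields $U_{v,t}(gl_n)^G \cong U_{r,s}(gl_n)$.

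Finally I would deduce the double-centralizer statement for the fixed points from the one over $\mathbb{Q}(v,t)$, which is the Proposition immediately preceding this subsection. The mechanism is that $G$-invariants commute with forming endomorphism algebras of $G$-equivariant modules: concretely, $\End_{\U^G}\big((\mathbf{V}^{\otimes k})^G\big) \cong \big(\End_{\U}(\mathbf{V}^{\otimes k})\big)^G$ and similarly on the other side, because a $\mathbb{Q}(r,s)$-linear endomorphism of the invariants extends uniquely to a $G$-equivariant $\mathbb{Q}(v,t)$-linear endomorphism of the whole space by base change along $\mathbb{Q}(v,t)/\mathbb{Q}(r,s)$. Applying $(-)^G$ to the isomorphisms $H_k(v,t) \simeq \End_{\U}(\mathbf{V}^{\otimes k})$ and the surjection $U_{v,t}(gl_n) \twoheadrightarrow \End_{H_k}(\mathbf{V}^{\otimes k})$, together with the exactness of taking invariants over a finite Galois extension of fields of characteristic zero, gives the corresponding statements for the invariant triple.

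The main obstacle I anticipate is the verification that taking $G$-invariants genuinely commutes with the endomorphism functors, i.e. the base-change step $\End_{\U}(\mathbf{V}^{\otimes k})^G \cong \End_{\U^G}((\mathbf{V}^{\otimes k})^G)$. This is a faithfully-flat or Galois-descent assertion that requires the module $\mathbf{V}^{\otimes k}$ to be free with a $G$-stable basis (which it is, by the explicit basis $\mathbf{v}_{\mathbf r}$) and requires care because $\sigma$ is only \emph{semilinear}, not $\mathbb{Q}(v,t)$-linear. The cleanest route is to invoke the standard result that for a finite Galois extension $L/F$ with group $G$ and an $L$-algebra $B$ with semilinear $G$-action, $B^G$-modules correspond to $B$-modules with compatible semilinear $G$-action, and that $\End$ commutes with descent; I would cite this and then only need to check that the surjectivity of $U_{v,t}(gl_n)\to\End_{H_k}(\mathbf{V}^{\otimes k})$ is preserved under $(-)^G$, which follows since $(-)^G$ is exact over a field extension. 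Once this descent formalism is in place, the remaining identifications are the routine relation-checking sketched above.
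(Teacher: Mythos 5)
Your proposal is not competing with an argument in the paper: the paper's own proof of this theorem is an empty \verb|proof| environment, so you are supplying what the authors omitted. Your overall strategy is certainly the intended one, and most of it is sound: $\mathbb{Q}(v,t)=\mathbb{Q}(r,s)(v)$ with $v^{2}=r/s$, $t=r/v$, so the extension is quadratic Galois with $\sigma\colon v\mapsto -v,\ t\mapsto -t$; the Hecke algebra and the tensor space have $\sigma$-stable bases ($T_w$ and $\mathbf v_{\mathbf r}$) with structure constants $vt-v^{-1}t=r-s$, $t^{2}=rs$, $vt=r$ in $\mathbb{Q}(r,s)$, giving $H_k(v,t)^G\cong H_k(r,s)$ and $\dim_{\mathbb{Q}(r,s)}(\mathbf V^{\otimes k})^G=n^k$; and the descent of the double centralizer (invariants commute with $\mathrm{End}$ for semilinear actions, exactness of $(-)^G$ preserves the surjectivity) is standard Galois descent, correctly flagged and correctly resolved by the free $\sigma$-stable basis.

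The genuine gap is in the identification $U_{v,t}(gl_n)^G\cong U_{r,s}(gl_n)$, which is the heart of the theorem. First, a local error: $\sqrt{-1}\,E_i$ is not $\sigma$-invariant (and $\sqrt{-1}\notin\mathbb{Q}(v,t)$); since $\sigma(E_i)=-E_i$ and $\sigma(t)=-t$, the invariant rescalings are $tE_i$ or $vE_i$, and one should show that $tE_i,\,F_i,\,A_a^{\pm1},\,B_a^{\pm1}$ generate $U_{v,t}(gl_n)^G$ over $\mathbb{Q}(r,s)$ (e.g.\ by noting they span $U_{v,t}(gl_n)$ over $\mathbb{Q}(v,t)$ and comparing with the descent isomorphism). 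Second, and more seriously, your claim that ``the relevant exponents are already $\sigma$-fixed'' fails for the paper's printed relation (R2): that relation gives $A_iF_{i+1}A_i^{-1}=v^{-\langle i,i+1\rangle}t^{-\langle i+1,i\rangle}F_{i+1}=tF_{i+1}$ and $A_{j+1}F_{j}A_{j+1}^{-1}=vF_{j}$, coefficients of odd total degree in $(v,t)$ which are not fixed by $\sigma$ and do not lie in $\mathbb{Q}(r,s)$. Applying the claimed involution (which fixes $A_i$ and $F_j$) to both sides of $A_iF_{i+1}A_i^{-1}=tF_{i+1}$ yields $tF_{i+1}=-tF_{i+1}$, so with the printed relations $\sigma$ is not even a well-defined algebra homomorphism, and no relation-by-relation bookkeeping can succeed. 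A complete proof must first repair this: a direct convolution computation shows the operators of Proposition~\ref{prop3} actually satisfy $A_iF_jA_i^{-1}=(vt)^{-\langle i,j\rangle}F_j$ (exponent $-\langle i,j\rangle$ on $t$, not $-\langle j,i\rangle$), and with that correction every defining relation has coefficients in $\mathbb{Q}(r,s)$ after the sign count, so the action is well defined and the relations descend. Finally, the paper never records a presentation of $U_{r,s}(gl_n)$ (its Remark lists only generators), so to make the asserted isomorphism meaningful you must also fix the target presentation and check the descended relations against it; that verification is precisely the missing content, both in your sketch and in the paper.
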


\begin{proof}
\end{proof}

\begin{rem}
$H_k(r, s)$ is a unital associate algebra over $\mathbb{Q}(r, s)$ with generators $\widetilde{T_i}$, $1 \leq i < k$subject to the following ralations:
 \begin{eqnarray*}
  (1)& & \widetilde{T_i}\widetilde{T_{i+1}}\widetilde{T_{i}}=\widetilde{T_{i+1}}\widetilde{T_{i}}\widetilde{T_{i+1}},  1\leq i <k. \\
  (2)& &\widetilde{T_i}\widetilde{T_j} = \widetilde{T_j}\widetilde{T_i},  {\rm if} |i - j| \geq 2. \\
  (3)& &(\widetilde{T_i} - r)(\widetilde{T_i} + s) = 0,  \forall i.
\end{eqnarray*}
$U_{r, s}(gl_n)$ is a $\mathbb{Q}(r, s)$ algebra generated by $\widetilde{E_i}, \widetilde{F_i}, \widetilde{K_i}, \widetilde{K_i'}$.
\end{rem}

\section{ Two new quantum group  $\widetilde{U_{v, t}(gl_n)^m}$ and $\widehat{U_{v, t}(gl_n)^m}$ }
In order to give the comultiplication in the two parameter case of two new quantum group appeared in \cite{FL14},  we give two new quantum group  $\widetilde{U_{v, t}(gl_n)^m}$ and $\widehat{U_{v, t}(gl_n)^m}$ in this section.

 For any $i\in [1,  n-1]$,  $a\in [1,  n]$, $m \in [1, n-1]$ , we define the function $E_i,  F_i,  A_a^{\pm 1},  B_a^{\pm 1}$ to be the same function in $\mcal S$ . we further define
\begin{align}
\begin{split}
J_+ (V,  V') & =
\begin{cases}
1 ,  &\mbox{if}\; V=V' \mbox{and} |V_m| = d \ mod \ 2;\\
0,  & \mbox{otherwise}.
\end{cases}
\\
J_- (V,  V') & =
\begin{cases}
1 ,  &\mbox{if}\; V=V' \mbox{and} |V_m| = d-1 \ mod \ 2;\\
0,  & \mbox{otherwise}.
\end{cases}
\end{split}
\end{align}

All these functions are elements in $\mcal S$.

\begin{prop}
The functions $E_i,  F_i$,   $A_a^{\pm 1}$,  $B_a^{\pm 1}$ and $J_{\pm}$  in $\mcal S$,  for any $i\in [1, n-1]$,  $a\in [1, n]$ ,    satisfy the  relations in \ref{prop3} together with the following relations.
\begin{eqnarray*}
  (R1)&&J_+ + J_- = 1, \ \ J_{\alpha}J_{\beta} = \delta_{\alpha, \beta}J_{\alpha}, \ \ J_{\pm}A_a = A_aJ_{\pm}, \ \ J_{\pm}A_a = A_aJ_{\pm}, \\
      && J_{\pm}E_i = E_iJ_{\pm}, \ \ J_{\pm}F_i = F_iJ_{\pm},  i \neq m;\ \ J_{\pm}E_m =E_mJ_{\mp},  J_{\pm}F_m =F_mJ_{\mp};\\
\end{eqnarray*}
\end{prop}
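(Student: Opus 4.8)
**The plan is to verify each relation directly on the convolution-algebra generators, exactly in the style of Proposition~\ref{prop3}.** The relations in \ref{prop3} are already established, so the entire task reduces to checking the new relations involving $J_\pm$. I would organize the proof around the single geometric fact that drives everything: the functions $J_+$ and $J_-$ are characteristic-type functions supported on the diagonal of $\mscr X \times \mscr X$, detecting the parity of $|V_m|$ modulo $2$. Concretely, $J_+(V,V')$ is $1$ when $V=V'$ and $|V_m| \equiv d \pmod 2$, and $J_-$ is the complementary parity detector.

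\textbf{First I would dispose of the purely diagonal relations.} Since both $J_+$ and $J_-$ are supported on the diagonal $\{V=V'\}$, and $|V_m|$ is either even or odd but not both, the identities $J_+ + J_- = 1$ and $J_\alpha J_\beta = \delta_{\alpha,\beta}J_\alpha$ follow immediately from the convolution formula~(\ref{eq30}): the convolution of two diagonal parity functions is again supported on the diagonal, and at each flag $V$ exactly one of $J_+, J_-$ takes the value $1$. The commutation $J_\pm A_a = A_a J_\pm$ (and similarly with $B_a$) is equally direct, because $A_a^{\pm 1}$ and $B_a^{\pm 1}$ are themselves diagonal functions, so both products are diagonal and their values multiply pointwise; parity of $|V_m|$ is unaffected by the $A_a$-weights, so the two orders agree.

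\textbf{The heart of the argument is the interaction of $J_\pm$ with $E_i$ and $F_i$.} Here I would compute the two convolution products $(J_\pm E_i)(V,V')$ and $(E_i J_\pm)(V,V')$ using the support conditions built into $E_i$: recall $E_i$ is supported on pairs with $V_i \overset{1}{\supset} V_i'$ and $V_j = V_j'$ for $j \neq i$. When $i \neq m$, passing the $E_i$-move does not change $|V_m|$, so the parity of $|V_m|$ is the same before and after, giving $J_\pm E_i = E_i J_\pm$; the same holds for $F_i$. When $i = m$, applying $E_m$ changes $V_m$ by exactly one dimension, so $|V_m|$ flips parity; this is precisely why the index on $J$ must switch, yielding $J_\pm E_m = E_m J_\mp$, and symmetrically $J_\pm F_m = F_m J_\mp$. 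The key computational step is to track \emph{which} flag's parity the diagonal factor reads off in each convolution order — in $J_\pm E_m$ the $J$ acts first and reads the source flag, whereas in $E_m J_\mp$ it reads the intermediate flag, and these differ by one dimension at the $m$-th step.

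\textbf{I expect the main obstacle to be bookkeeping the parity shift across the convolution, not any deep difficulty.} Everything rests on the elementary observation that the $E_m$ and $F_m$ moves alter $\dim V_m$ by $\pm 1$ while all other generators and moves preserve $|V_m|\bmod 2$; once this is phrased correctly relative to the intermediate flag $V''$ in~(\ref{eq30}), the index interchange $J_\pm \leftrightarrow J_\mp$ falls out mechanically. The only genuine care required is consistency of the mod-$2$ conventions between $J_+$ (matching $d$) and $J_-$ (matching $d-1$), so that after an $E_m$-move decreasing $\dim V_m$, a configuration counted by $J_+$ on one side is correctly counted by $J_-$ on the other. I would therefore present the $i=m$ case in full detail and remark that all remaining identities, as well as the $F$-analogues, follow by the identical parity argument.
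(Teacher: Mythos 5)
Your proof is correct, and it is worth noting that the paper itself supplies \emph{no} proof of this proposition: it is stated bare, with the next proof in the text appearing only for the later analogue (Proposition \ref{prop 5}, where $J_{\pm}$ additionally require $V_m=V_{m+1}$). Your argument therefore fills a gap rather than duplicates an existing one, and it does so in exactly the computational style the paper uses for Propositions \ref{prop3} and \ref{prop 5}: reduce everything to the convolution formula (\ref{eq30}), use that $J_{\pm}$, $A_a^{\pm 1}$, $B_a^{\pm 1}$ are supported on the diagonal so that convolution against them collapses the sum over the intermediate flag, and then track the single relevant invariant. Your key observation — that in $J_{\pm}E_m$ the diagonal factor reads the parity of $|V_m|$ for the source flag while in $E_mJ_{\mp}$ it reads $|V_m'|$ for the target flag, and these differ by exactly one on the support of $E_m$ (hence the index flip, with $J_{\pm}E_i=E_iJ_{\pm}$ for $i\neq m$ since then $V_m=V_m'$) — is precisely the content needed, and it matches the mechanism the paper implicitly relies on in the proof of Proposition \ref{prop 5}. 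One small point in your favor: the relation list in the proposition prints $J_{\pm}A_a=A_aJ_{\pm}$ twice (the second instance is evidently meant to be $B_a$), and your diagonal-commutation argument covers $B_a^{\pm 1}$ as well, which is the correct reading.
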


\begin{cor}
The algebra $\mbb Q(v, t) \otimes_{\mcal A} \mcal S$ is generated by the functions  $E_i,  F_i$,   $A_a^{\pm 1}$ ,  $B_a^{\pm 1}$, and$J_{\pm}$  in $\mcal S$,  for any $i\in [1, n - 1]$,  $a\in [1,  n]$.
\end{cor}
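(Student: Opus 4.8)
The statement is in fact essentially immediate from Corollary~\ref{S-generator}, and my first move would be to say so: that corollary already asserts that $\mbb Q(v, t)\otimes_{\mcal A}\mcal S$ is generated by $E_i,F_i,A_a^{\pm 1},B_a^{\pm 1}$ alone, and since $J_+$ and $J_-$ are by their very definition elements of $\mcal S\subset \mbb Q(v, t)\otimes_{\mcal A}\mcal S$, enlarging a generating family by them still yields a generating family. So the plan is to invoke Corollary~\ref{S-generator} verbatim, and then, in order to make the role of $J_\pm$ transparent rather than leave the corollary looking vacuous, to record the sharper fact that $J_+$ and $J_-$ already lie in the subalgebra generated by the diagonal elements $A_a$. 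This exhibits them as genuinely redundant generators and is the part of the argument actually worth writing out.

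To carry out that sharper claim I would first isolate the diagonal part $\mcal S^{0}$ of $\mcal S$. Let $\D$ run over the diagonal matrices of $\Theta_d$, equivalently over the compositions $(\lambda_1,\dots,\lambda_n)$ of $d$ with $\lambda_a\ge 0$, and write $e_{\D}$ for the corresponding characteristic function. A direct convolution computation gives $e_{\D}*e_{\D'}=\delta_{\D,\D'}\,e_{\D}$ and $\sum_{\D}e_{\D}=1$, so the $e_{\D}$ are orthogonal idempotents and $\mcal S^{0}=\mbox{span}_{\mbb Q(v, t)}\{e_{\D}\}\cong\prod_{\D}\mbb Q(v, t)$ is a commutative subalgebra. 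All the relevant functions live in $\mcal S^{0}$: from the defining formulas one reads off $A_a=\sum_{\D}(vt)^{\lambda_a}e_{\D}$, while $J_+=\sum_{|V_m|\equiv d}e_{\D}$ and $J_-=\sum_{|V_m|\equiv d-1}e_{\D}$, the congruences being modulo $2$ and $|V_m|=\lambda_1+\dots+\lambda_m$.

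The final step is a Lagrange interpolation inside $\mcal S^{0}$. I would set $P=A_1*A_2*\cdots*A_m$, which acts on the idempotent $e_{\D}$ by the scalar $(vt)^{|V_m|}$ with $|V_m|\in\{0,1,\dots,d\}$. Since $vt$ is a monomial in the indeterminates $v,t$ and in particular has infinite multiplicative order, the $d+1$ scalars $(vt)^0,\dots,(vt)^d$ are pairwise distinct; hence there is a polynomial $f\in\mbb Q(v, t)[X]$ of degree at most $d$ with $f\big((vt)^{k}\big)=1$ when $k\equiv d\pmod 2$ and $f\big((vt)^{k}\big)=0$ otherwise. Evaluating on each $e_{\D}$ then gives $f(P)=J_+$, whence $J_-=1-f(P)$, so that $J_\pm\in\mbb Q(v, t)[A_1,\dots,A_m]$. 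The hard part here is nonexistent: the only point needing care is the pairwise distinctness of the interpolation nodes $(vt)^{k}$, which is automatic over the field $\mbb Q(v, t)$, and once this is granted the corollary follows either trivially from Corollary~\ref{S-generator} or, more informatively, from the redundancy of $J_\pm$ just established.
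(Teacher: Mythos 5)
Your proposal is correct and takes essentially the same route as the paper: the paper states this corollary without any proof precisely because it is immediate from Corollary~\ref{S-generator} (the functions $E_i, F_i, A_a^{\pm 1}, B_a^{\pm 1}$ already generate $\mbb Q(v,t)\otimes_{\mcal A}\mcal S$, and enlarging a generating set by the elements $J_{\pm}\in\mcal S$ changes nothing). Your supplementary observation that $J_{\pm}$ are actually redundant—being polynomials in $A_1 * \cdots * A_m$ via Lagrange interpolation at the pairwise distinct scalars $(vt)^0,\dots,(vt)^d$ acting on the orthogonal diagonal idempotents—is also correct and goes beyond what the paper records.
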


\subsection{Another limit algebra $\mcal K'$ }

 We set
$
{}_pA=A+2pI
$.
Let
$$\mcal K'= \mbox{span}_{\mcal A} \{ \{\A\} | \A\in \wt{\Theta} \},
$$
where the notation $\{\A\}$ is a formal  symbol.
Let $v', t'$ be a independent indeterminates,  and we denote by $\mfk R$ the ring $Q(v, t)[v', t']$ .

\begin{prop}
\label{prop5'}
Suppose that $\A_1,  \A_2, \cdots,  \A_r \ (r\geq 2)$ are matrices in $\widetilde{\Theta}$
such that ${\rm co}(\A_i)={\rm ro}(\A_{i+1})$  for $1\leq i \leq r-1$.
There exist $\z_1,  \cdots,  \z_m\in \widetilde{\Theta}$,  $G'_j(v, v', t, t')\in \mfk R$ and $p_0\in \mbb N$ such that in $\mcal S_d$ for some $d$,   we have
$$\{{}_p \A_1\} * \{{}_p\A_2\} * \cdots *\{{}_p \A_r\}=\sum_{j=1}^mG'_j(v, v^{-p}, t, t^{p})\{{}_p \z_j\}, \quad
\forall  p\geq p_0. $$
\end{prop}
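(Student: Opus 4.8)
The plan is to mirror the proof of Proposition \ref{prop5} line by line, the only structural change being that the diagonal shift is now $2pI$ rather than $pI$, which simply doubles every $p$-dependent exponent. First I would reduce to the case $r=2$ in which one factor, say $\A_1$, satisfies that $\A_1-RE_{h,h+1}$ (resp.\ $\A_1-RE_{h+1,h}$) is diagonal for some $R\in\mbb N$: by Theorem \ref{thm1} and Corollary \ref{S-M} every $\{\A\}$ is a product of such almost-diagonal generators plus lower terms, so the general $r$ follows by associativity and induction, exactly as in the proof of Proposition 4.2 in \cite{BLM90}. For the base case I would apply the multiplication formula (\ref{eq22}) (resp.\ (\ref{eq24})) from Proposition \ref{cor5} to $\{{}_p\B\}*\{{}_p\A\}$ and track the $p$-dependence of the resulting coefficients.

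The key observation is that $A\mapsto A+2pI$ is central for the operation $\A\mapsto\A_t$: since $\A_t=\A+\sum_u t_u(E_{hu}-E_{h+1,u})$ and $2pI$ is scalar, we have $({}_p\A)_t={}_p(\A_t)$. Hence the index matrices appearing on the right are exactly the $\z_j=\A_t$ of the unshifted formula, carried along with the shift $+2pI$, which is precisely the form $\{{}_p\z_j\}$ demanded by the statement. It then remains to check that the scalar coefficient $v^{\beta(t)}t^{\alpha(t)}\prod_u\overline{\binom{a_{hu}+t_u}{t_u}}_{v}$ depends on $p$ only through $v^{-p}$ and $t^{p}$. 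The sole $p$-dependent factors are the diagonal Gaussian binomial $\overline{\binom{a_{hh}+2p+t_h}{t_h}}_{v}$ and the summands of $\alpha(t),\beta(t)$ containing the shifted diagonal entries $a_{hh}$ and $a_{h+1,h+1}$. Expanding the binomial as a product of $v$-numbers and using $\overline{(m+2p)_v}=\frac{v^{-2m}v^{-4p}-1}{v^{-2}-1}$, the $p$-dependence collects into factors $v^{-4p}=(v^{-p})^4$, i.e.\ into $v'^4$ after writing $v'=v^{-p}$; the linear-in-$p$ contributions of $\alpha(t)$ from the two shifted diagonal entries give a total power $t^{4p\sum_{u\le h}t_u}=t'^{4\sum_{u\le h}t_u}$ with $t'=t^p$. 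This defines $G'_t(v,v',t,t')\in\mfk R$ with $G'_t(v,v^{-p},t,t^p)$ equal to the true structure constant for all large $p$; these are the doubled-exponent analogues of the constants written in Proposition \ref{prop5}.

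The main obstacle is the uniform bookkeeping: I must verify that after the doubling every $v^{\pm p}$ and $t^{\pm p}$ reorganizes into a genuine element of $\mfk R=\mathbb{Q}(v,t)[v',t']$ under $v'=v^{-p}$, $t'=t^p$, with no residual $p$-dependence hidden in the index set, and that a single threshold $p_0$ works for all the finitely many $t$ occurring, so that the $v$-numbers in the denominators never vanish and the range of summation stabilizes. Once the $r=2$ coefficients are seen to lie in $\mfk R$, the locally finite estimates and the passage to general $r$ are identical to those in \cite{BLM90}, so no new difficulty arises there.
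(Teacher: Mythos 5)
Your proposal is correct and takes essentially the same route as the paper, which gives no separate argument for this proposition and implicitly relies on the proof of Proposition \ref{prop5} (itself adapted from Proposition 4.2 of \cite{BLM90}). Your transfer of that proof --- reduction to the almost-diagonal $r=2$ case, the commutation $({}_p\A)_t={}_p(\A_t)$, and the doubling of the $p$-dependent exponents from $v'^2$, $t'^{2\sum_{u\leq h}t_u}$ to $v'^4$, $t'^{4\sum_{u\leq h}t_u}$ forced by the shift $2pI$ --- is exactly the bookkeeping the paper's approach requires.
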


By specialization $v', t'$ at $v'=1, t' = 1$,
 there is a unique associative $\mcal A$-algebra structure on $\mcal K$,  without unit,   where
 the product is given by
 $$\{\A_1\} \cdot \{\A_2\}\cdot \dots \cdot  \{\A_r\} =\sum_{j=1}^m G'_j(v, 1, t, 1)[\z_j]$$
 if $\A_1, \cdots,  \A_r$ are as in Proposition \ref{prop5'}.

Let $\A$ and  $\B \in \wt{\Theta}$ be chosen such that $\B -rE_{m, m+1}$
is  diagonal for some $ r\in \mbb N$
satisfying ${\rm co}(\B)={\rm ro}(\A)$ .  Then we have
\begin{equation}\label{eq58}
\{ \B\} \cdot  \{\A\}
 =\sum_{t} v^{\beta(t)}t^{\alpha(t)}\prod_{u=1}^N
\overline{\begin{pmatrix}a_{hu}+t_u\\ t_u \end{pmatrix}}_{\!\!\!{}v}\
\{\A_{t}\},
\end{equation}
where the sum is taken over all $t=(t_u)\in \mbb N^n$ such that
$\sum_{u=1}^n t_u=r$ and $t_u \leq a_{m+1, u}$,
$\alpha(t), \beta(t)$,  $\A_{t} \in \widetilde{\Theta}$ are defined in (\ref{eq22}),
 .

Similarly,  if $\A,  \C \in \widetilde{\Theta}$  are chosen such that $\C-rE_{m+2, m+1}$
is diagonal  for some $1\leq h< n,  r\in \mbb N$ satisfying ${\rm co}(\C)={\rm ro}(\A)$ ,  then we have
\begin{align}\label{eq57}
\{\C\} \cdot \{\A\}
 =\sum_{t}v^{\beta'(t)}t^{\alpha'(t)}\prod_{u=1}^N
\overline{\begin{pmatrix} a_{h+1, u}+t_u\\ t_u\end{pmatrix}}_{\!\!\!{}v}
\ \{\A(h,  t) \},
\end{align}
where  the sum is  taken over all $t=(t_u)\in \mbb N^n$ such that $\sum_{u=1}^n t_u=r$and $t_u \leq a_{m+1, u}$,
$, \alpha'(t), \beta'(t)$ $\A(h,  t) \in \widetilde{\Theta}$ are defined in (\ref{eq24}).

\subsection{The algebra $\mcal U'$}

In this section,  we shall define a new algebra $\mcal U$ in the  completion of $\mcal K$ similar to  ~\cite[Section 5]{BLM90}.

Let $\hat{\mcal K}$ be the $\mbb Q(v, t)$-vector space of all formal sum
$\sum_{\A\in \tilde{\Theta}}\xi_{\A} \{\A\}$ with $\xi_{\A}\in \mbb Q(v, t)$ and  a locally finite property,  i. e. ,
for any ${\mbf t}\in \mbb Z^n$,  the sets $\{\A\in \tilde{\Theta}|{\rm ro}(\A)={\mbf t},  \xi_{\A} \neq 0\}$
and
$\{\A\in \widetilde{\Theta} | {\rm co}(\A)={\mbf t},  \xi_{\A} \neq 0\}$ are finite.
The space $\hat{\mcal K}$ becomes an  associative algebra over $\mbb Q(v, t)$
 when equipped  with  the following multiplication:
$$
\sum_{\A\in \widetilde{\Theta}} \xi_{\A} \{\A\}   \cdot \sum_{\B \in \widetilde{\Theta}} \xi_{\B} \{\B\}
=\sum_{\A,  \B} \xi_{\A} \xi_{\B} \{\A\} \cdot \{\B\},
$$
where the product $\{\A\} \cdot \{\B\}$ is taken  in $\mcal K$.
This is shown in exactly the same as ~\cite[Section 5]{BLM90}.

Observe that the algebra $\hat{\mcal K}$ has a unit element $\sum\{\md\}$,  the  summation  of  all diagonal matrices.

We define the following  elements in $\hat{\mcal K}$.
For any nonzero  matrix $\A \in \wt{\Theta}$,
let $\hat{\A}$ be the matrix obtained
by replacing diagonal entries of $\A$ by zeroes.
We set
$
\Theta^{0}= \{ \hat{\A} | \A\in \wt{\Theta} \}.
$

For any $\hat{\A}$ in $\Theta^{0}$ and ${\mbf j}=(j_1, \cdots,  j_n)\in \mbb Z^n$,  we define
\begin{equation} \label{wtA}
\hat{\A} ({\mbf j})=\sum_{\lambda}v^{\lambda_1j_1+\cdots+\lambda_{n}j_{n}}t^{\lambda_1|j_1|+\cdots+\lambda_{n}|j_{n}|}\{ \hat{\A} + D_{\lambda} \}\quad
\end{equation}
where the  sum runs through all $\lambda=(\lambda_i)\in \mbb Z^n$ such that
$\hat{\A} + D_{\lambda},   \in \wt{\Theta}$,  where $D_{\lambda}$ is the diagonal matrices with diagonal entries $(\lambda_i). $

And we also define
$$
J_{+} = \sum_{\lambda \in S_0}\{ D_{\lambda} \}, \
J_{-} = \sum_{\lambda \in S_1}\{ D_{\lambda} \},
$$
Where $S_0 = \{\lambda |  \sum_{i = 1}^{m} \lambda_i \equiv \sum_{i = 1}^{n} \lambda_i\ mod \ 2 \}$,
$S_1 = \{\lambda | \sum_{i = 1}^{m} \lambda_i \equiv \sum_{i = 1}^{n} \lambda_i -1 \ mod \ 2 \}$

For $i\in [1, n-1]$,  let
\begin{equation*}
E_i=E_{i, i+1}(0)\quad{\rm and}\quad
F_i=E_{i+1,  i}(0).
\end{equation*}

Let $\mcal U'$ be the subalgebra of $\hat{\mcal K}$ generated by $E_i,  F_i,  0(\mbf j)$ , $J_{\pm}$ for all $i\in [1, n-1]$ and $\mbf j\in \mbb Z^n$.

\begin{prop}
The following relations hold in $\mcal U'$.
 \allowdisplaybreaks
\begin{eqnarray}
    &&J_+ + J_- = 1, \ \ J_{\alpha}J_{\beta} = \delta_{\alpha, \beta}J_{\alpha}, \ \ J_{\pm}0(\mbf j) = 0(\mbf j)J_{\pm}, \\
      && J_{\pm}E_i = E_iJ_{\pm}, \ \ J_{\pm}F_i = F_iJ_{\pm},  i \neq m;\ \ J_{\pm}E_m =E_mJ_{\mp},  J_{\pm}F_m =F_mJ_{\mp};\\
&&0(\mbf j)0(\mbf j')=0(\mbf j')0(\mbf j), \\
&\texttt{}&0(\mbf j)E_h=v^{j_h-j_{h+1}}t^{|j_h|-|j_{h+1}|}E_h0(\mbf j), \
       0(\mbf j)F_h=v^{-j_h+j_{h+1}}t^{-|j_h|+|j_{h+1}|}F_h0(\mbf j), \\
  &&t(E_hF_h-F_hE_h)=(v-v^{-1})^{-1}(0(\underline h-\underline{h+1})-0(\underline{h+1} -\underline h)), \\
  & &E_i^2E_{i+1} - (vt + v^{-1}t)E_iE_{i+1}E_{i} + t^{2}E_{i+1}E_{i}^{2} = 0, \\
  & &t^{2}E_{i+1}^2E_{i} - (vt + v^{-1}t)E_{i+1}E_{i}E_{i+1} + E_{i}E_{i+1}^{2} = 0, \\
  & &F_i^2F_{i+1} - (vt^{-1} + v^{-1}t^{-1})F_iF_{i+1}F_{i} + t^{-2}F_{i+1}F_{i}^{2} = 0, \\
  & &t^{-2}F_{i+1}^2F_{i} - (vt^{-1} + v^{-1}t^{-1})F_{i+1}F_{i}F_{i+1} + F_{i}F_{i+1}^{2} = 0.
\end{eqnarray}
where $\mbf j,  \mbf j'\in \mbb Z^n$,  $h,  i,  j\in [1,  n]$ and  $\underline i \in \mbb N^N$ is the vector whose $i$-th entry is 1 and 0 elsewhere.
\end{prop}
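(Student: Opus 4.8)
The plan is to observe first that every relation in the list that does not involve $J_{\pm}$ is literally one of the relations already established in Proposition~\ref{prop-a} for $\mcal U$; since the generators $E_i=E_{i,i+1}(0)$, $F_i=E_{i+1,i}(0)$, $0(\mbf j)$ and the multiplication formulas defining $\mcal K'$ have exactly the same shape as in $\mcal K$, those proofs carry over verbatim. Thus I would only need to establish the four lines of relations involving $J_{\pm}$. Throughout I will use the basic fact, immediate from the convolution product, that the symbols attached to diagonal matrices are orthogonal idempotents, $\{D_\lambda\}\cdot\{D_\mu\}=\delta_{\lambda\mu}\{D_\lambda\}$, because a diagonal matrix has equal row and column vector $\lambda$.

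First I would dispose of the purely diagonal relations. Since $S_0$ and $S_1$ partition the set of all $\lambda\in\mbb Z^n$, the element $J_++J_-=\sum_\lambda\{D_\lambda\}$ is exactly the unit of $\hat{\mcal K}$, and orthogonality of the $\{D_\lambda\}$ together with the disjointness of $S_0,S_1$ gives $J_\alpha J_\beta=\delta_{\alpha\beta}J_\alpha$. Because $0(\mbf j)$ is by (\ref{wtA}) a $\mbb Q(v,t)$-linear combination of the $\{D_\lambda\}$, it commutes with each $J_{\pm}$ for the same reason.

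The heart of the argument is the interaction of $J_{\pm}$ with $E_i$ and $F_i$. First I would compute from the convolution product that $\{D_\lambda\}\cdot\{E_{i,i+1}+D_{\mu'}\}$ equals $\{E_{i,i+1}+D_{\mu'}\}$ when $\lambda=\mu'+\underline i$ and vanishes otherwise (matching $\co(D_\lambda)$ against $\ro(E_{i,i+1}+D_{\mu'})$), while $\{E_{i,i+1}+D_{\mu'}\}\cdot\{D_\lambda\}$ equals $\{E_{i,i+1}+D_{\mu'}\}$ when $\lambda=\mu'+\underline{i+1}$ and vanishes otherwise. Writing $E_i=\sum_{\mu'}\{E_{i,i+1}+D_{\mu'}\}$, it then follows that $J_\epsilon E_i$ is the sum of $\{E_{i,i+1}+D_{\mu'}\}$ over those $\mu'$ with $\mu'+\underline i\in S_\epsilon$, whereas $E_i J_{\epsilon'}$ is the sum over those $\mu'$ with $\mu'+\underline{i+1}\in S_{\epsilon'}$. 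Now membership in $S_0$ versus $S_1$ is governed by the parity of $\sum_{k=m+1}^n\lambda_k$ (equivalently of $\sum_{k=1}^m\lambda_k-\sum_{k=1}^n\lambda_k$): adding $\underline i$ preserves this parity when $i\le m$ and flips it when $i>m$, and likewise for $\underline{i+1}$. For $i\neq m$ the indices $i$ and $i+1$ lie on the same side of $m$, so $\underline i$ and $\underline{i+1}$ have the same effect and the two index sets coincide, giving $J_\epsilon E_i=E_i J_\epsilon$. For $i=m$, however, $\underline m$ preserves the parity while $\underline{m+1}$ flips it, so $\mu'+\underline m\in S_\epsilon$ is equivalent to $\mu'+\underline{m+1}$ lying in the opposite set; this yields precisely $J_\pm E_m=E_m J_\mp$. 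The relations for $F_i=\sum_{\mu'}\{E_{i+1,i}+D_{\mu'}\}$ follow from the identical computation with the row and column shifts interchanged, so that the relevant shift is $\underline{i+1}$ on the left and $\underline i$ on the right.

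The main obstacle is purely combinatorial bookkeeping: one must keep careful track that the parity functional $\sum_{k=m+1}^n\lambda_k$ is altered by the single index $m+1$ and not by $m$, which is exactly what localizes the parity flip at the single node $m$. This is the algebraic shadow of the geometric fact, visible in the definition of $J_\pm$, that $E_m$ and $F_m$ change $|V_m|$ by one while $E_i,F_i$ for $i\neq m$ leave $|V_m|$ unchanged.
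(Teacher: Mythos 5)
Your proof is correct, and it is essentially the argument the paper intends but never writes down: this proposition appears in the paper with no proof at all, the evident intention being that the relations not involving $J_{\pm}$ are inherited from the analogous proposition for $\mcal U$ (whose proof the paper does give), while the $J_{\pm}$-relations follow from the same style of convolution computation. Your two key ingredients --- the matching rules $\{D_\lambda\}\cdot\{E_{i,i+1}+D_{\mu'}\}=\delta_{\lambda,\,\mu'+\underline{i}}\,\{E_{i,i+1}+D_{\mu'}\}$ and $\{E_{i,i+1}+D_{\mu'}\}\cdot\{D_\lambda\}=\delta_{\lambda,\,\mu'+\underline{i+1}}\,\{E_{i,i+1}+D_{\mu'}\}$, which are exactly the facts used in the paper's proof for $\mcal U$, together with the observation that membership in $S_0$ versus $S_1$ depends only on the parity of $\sum_{k=m+1}^{n}\lambda_k$ (preserved by adding $\underline{i}$ when $i\le m$, flipped when $i>m$) --- correctly localize the twist $J_{\pm}E_m=E_mJ_{\mp}$, $J_{\pm}F_m=F_mJ_{\mp}$ at the node $m$ and establish all the stated relations.
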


\subsection{The algebra $\widetilde{U_{v, t}(gl_n)^m}$}

\begin{Def}
$\widetilde{U_{v, t}(gl_n)^m}$ is an associative $\mathbb{Q}(v, t)$-algebra with 1 generated by symbols $E_i,  F_i,  A_a,  B_a,  J_{\alpha}$ for all $i\in [1, n-1]$, $a \in [1,  n]$ and $\alpha \in \{+, -\}$ and subject to the following relations.
 \allowdisplaybreaks
\begin{eqnarray}
&&J_+ + J_- = 1, \ \ J_{\alpha}J_{\beta} = \delta_{\alpha, \beta}J_{\alpha}, \ \ J_{\pm}A_a = A_aJ_{\pm}, \ \ J_{\pm}B_a = B_aJ_{\pm}, \\
      && J_{\pm}E_i = E_iJ_{\pm}, \ \ J_{\pm}F_i = F_iJ_{\pm},  i \neq m;\ \ J_{\pm}E_m =E_mJ_{\mp},  J_{\pm}F_m =F_mJ_{\mp};\\
& & A_i^{\pm 1}A^{\pm 1}_j=A^{\pm 1}_jA_i^{\pm 1}, \ \ B^{\pm 1}_iB^{\pm 1}_j=B^{\pm 1}_jB^{\pm 1}_i, \\
     & & A_i^{\pm 1}B^{\pm 1}_j=B^{\pm 1}_jA_i^{\pm 1}, \ \ A_i^{\pm 1}A_i^{\mp 1}=1=B^{\pm 1}_iB^{\mp 1}_i. \\
& &A_iE_jA^{-1}_i=v^{\langle i, j\rangle}t^{\langle i, j\rangle}E_j, \ \ B_iE_jB^{-1}_i=v^{-\langle i, j\rangle}t^{\langle i, j\rangle}E_j, \\
     & &A_iF_jA^{-1}_i=v^{-\langle i, j\rangle}t^{- \langle j, i\rangle}F_j, \ \ B_iF_jB^{-1}_i=v^{\langle i, j\rangle}t^{-\langle j, i\rangle}F_j. \\
& &E_iF_j-F_j E_i=\delta_{ij}\frac{A_iB_{i+1}-B_iA_{i+1}}{vt-v^{-1}t}. \\
  & &E_i^2E_{i+1} - (vt + v^{-1}t)E_iE_{i+1}E_{i} + t^{2}E_{i+1}E_{i}^{2} = 0, \\
  & &t^{2}E_{i+1}^2E_{i} - (vt + v^{-1}t)E_{i+1}E_{i}E_{i+1} + E_{i}E_{i+1}^{2} = 0, \\
  & &F_i^2F_{i+1} - (vt^{-1} + v^{-1}t^{-1})F_iF_{i+1}F_{i} + t^{-2}F_{i+1}F_{i}^{2} = 0, \\
  & &t^{-2}F_{i+1}^2F_{i} - (vt^{-1} + v^{-1}t^{-1})F_{i+1}F_{i}F_{i+1} + F_{i}F_{i+1}^{2} = 0.
\end{eqnarray}
\end{Def}

\begin{prop} \label{Upsilon}
The assignment $E_i\mapsto E_i$,  $F_i\mapsto F_i$,  $A_a \mapsto 0(\underline a)$
,  $B_a \mapsto 0(-\underline a)$, and $J_{\alpha} \mapsto J_{\alpha}$  for any $i\in [1, n-1]$,  $a\in [1,  n]$ , $\alpha \in \{+,  -\}$
 defines a  algebra isomorphism $\Upsilon: \widetilde{U_{v, t}(gl_n)^m} \rightarrow \mcal U'$ .
\end{prop}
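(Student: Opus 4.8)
The plan is to verify that $\Upsilon$ is a well-defined surjective algebra homomorphism and then to prove injectivity, the last step being the heart of the matter.

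First I would check well-definedness, i.e. that the images $E_i,F_i,0(\underline a),0(-\underline a),J_{\pm}\in\mcal U'$ satisfy every defining relation of $\widetilde{U_{v,t}(gl_n)^m}$. These are precisely the relations collected in the Proposition immediately preceding this statement, so the relations among $J_{\pm}$ and among $E_i,F_i,0(\mbf j)$ hold verbatim. For the Cartan relations one uses the commutation form $0(\underline a)E_h=v^{\langle a,h\rangle}t^{\langle a,h\rangle}E_h0(\underline a)$, which follows from $0(\mbf j)E_h=v^{j_h-j_{h+1}}t^{|j_h|-|j_{h+1}|}E_h0(\mbf j)$ together with $\langle a,h\rangle=\delta_{ah}-\delta_{a,h+1}$ (and the analogous identity for $0(-\underline a)$ and $F_h$). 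For the commutator relation one records $0(\underline a)0(-\underline b)=0(\underline a-\underline b)$ for $a\neq b$, so that $A_iB_{i+1}\mapsto 0(\underline i-\underline{i+1})$ and $B_iA_{i+1}\mapsto 0(\underline{i+1}-\underline i)$; since $vt-v^{-1}t=t(v-v^{-1})$, the defining relation $E_iF_j-F_jE_i=\delta_{ij}\frac{A_iB_{i+1}-B_iA_{i+1}}{vt-v^{-1}t}$ becomes exactly $t(E_hF_h-F_hE_h)=(v-v^{-1})^{-1}\big(0(\underline h-\underline{h+1})-0(\underline{h+1}-\underline h)\big)$. Hence $\Upsilon$ extends to an algebra homomorphism.

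For surjectivity I would recall that $\mcal U'$ is generated by $E_i,F_i,J_{\pm}$ and the $0(\mbf j)$ for $\mbf j\in\mbb Z^n$. The first three lie in the image by construction. Since the diagonal symbols $\{D_\lambda\}$ are orthogonal idempotents, one has $0(\underline a)^p=\sum_\lambda v^{p\lambda_a}t^{p\lambda_a}\{D_\lambda\}$ and $0(-\underline a)^p=\sum_\lambda v^{-p\lambda_a}t^{p\lambda_a}\{D_\lambda\}$, whence $0(\mbf j)=\prod_{a}0(\sgn(j_a)\underline a)^{|j_a|}$ is a product of the elements $\Upsilon(A_a),\Upsilon(B_a)$. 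Thus every generator of $\mcal U'$ is hit and $\Upsilon$ is onto.

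The main obstacle is injectivity. Here I would exploit that $J_+,J_-$ are orthogonal idempotents with $J_++J_-=1$ on both sides and $\Upsilon(J_\alpha)=J_\alpha$, so $\Upsilon$ respects the Peirce decompositions into blocks $J_\alpha(\,\cdot\,)J_\beta$. Writing $\widetilde U^0$ for the subalgebra generated by $E_i,F_i,A_a,B_a$, the relations $J_{\pm}E_i=E_iJ_{\pm}$, $J_{\pm}F_i=F_iJ_{\pm}$ $(i\neq m)$ and $J_{\pm}E_m=E_mJ_{\mp}$, $J_{\pm}F_m=F_mJ_{\mp}$ let one push every $J$ to the right and collapse products of $J$'s by orthogonality, giving $\widetilde U=\widetilde U^0J_+\oplus\widetilde U^0J_-$; the parity (number of $E_m,F_m$ factors $\bmod\,2$) is a $\mbb Z/2$-grading on $\widetilde U^0$ because every relation of $\widetilde U^0$ is parity-homogeneous, and it governs which blocks are nonzero. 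By the earlier Corollary $\Upsilon|_{\widetilde U^0}\colon\widetilde U^0\xrightarrow{\ \sim\ }\mcal U$ is an isomorphism, carrying the monomial basis of $\widetilde U^0$ (coming from $U_{v,t}(gl_n)$ as in \cite{BLM90}) to that of $\mcal U$. It then remains to see that right multiplication of this basis by $J_+$ and by $J_-$ yields a linearly independent family in $\mcal U'$: since $J_{+}=\sum_{\lambda\in S_0}\{D_\lambda\}$, $J_{-}=\sum_{\lambda\in S_1}\{D_\lambda\}$, and $S_0,S_1$ partition the shift lattice, multiplication by $J_+$ (resp. $J_-$) simply retains the diagonal shifts of the corresponding parity, and the $\{D_\lambda\}$ are linearly independent in $\hat{\mcal K}$. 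A spanning set of $\widetilde U$ is then sent to a linearly independent set of $\mcal U'$, so $\Upsilon$ is injective, hence an isomorphism. The delicate point, and where I expect to spend the real effort, is exactly this last independence claim: one must show that the parity projections $J_{\pm}$ do not collapse the basis, i.e. that $\widetilde U$ genuinely doubles $\widetilde U^0$ in the same way that $\mcal U'$ doubles $\mcal U$. I would settle it by a direct parity refinement of the basis theorem underlying the isomorphism $U_{v,t}(gl_n)\cong\mcal U$.
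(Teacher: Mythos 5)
The paper itself offers no proof of this Proposition: it is stated bare, exactly as the earlier Corollary asserting $U_{v,t}(gl_n)\simeq \mcal U$ is asserted to ``directly follow''. So your proposal must stand on its own. Its skeleton is reasonable, and several computations are right: the identity $0(\underline a)\,0(-\underline b)=0(\underline a-\underline b)$ for $a\neq b$, the surjectivity formula $0(\mbf j)=\prod_a 0(\sgn(j_a)\,\underline a)^{|j_a|}$, the observation that $vt-v^{-1}t=t(v-v^{-1})$ reconciles the commutator relations (this is also why the $E_i\mapsto tE_i$ rescaling in the earlier Corollary is compatible with $E_i\mapsto E_i$ here, a point you use but never make explicit), and the Peirce decomposition with respect to the orthogonal idempotents $J_\pm$ is the right way to organize injectivity.

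There are, however, two genuine gaps. First, well-definedness is not actually established. The defining relations of $\widetilde{U_{v,t}(gl_n)^m}$ include $A_a^{\pm1}A_a^{\mp1}=1=B_a^{\pm1}B_a^{\mp1}$, so one must exhibit an inverse of $0(\underline a)$ \emph{inside} $\mcal U'$. With the paper's definition $0(\mbf j)=\sum_\lambda v^{\lambda\cdot\mbf j}\,t^{\lambda\cdot|\mbf j|}\{D_\lambda\}$ one has $0(\underline a)\,0(-\underline a)=\sum_\lambda t^{2\lambda_a}\{D_\lambda\}\neq 1$, and since right multiplication by $0(\underline a)$ rescales each $\{\A\}$ by $(vt)^{\co(\A)_a}$, the only possible inverse in $\hat{\mcal K}$ is $\sum_\lambda (vt)^{-\lambda_a}\{D_\lambda\}$, which is not any $0(\mbf j)$ (that would force $|j_a|=-1$). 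Worse, the $0(\mbf j)$'s generate a polynomial, not Laurent, algebra of diagonal functions (the characters $(vt)^{\lambda_a}$ and $(v^{-1}t)^{\lambda_a}$ are algebraically independent), and the multiplication formulas for $\{\B\}\cdot\{\A\}$ only produce diagonal coefficients built from characters $v^{c\lambda_i}t^{d\lambda_i}$ with $d\geq|c|$, whereas the putative inverse has $(c,d)=(-1,-1)$; so no inverse exists in $\mcal U'$ at all. This is an obstruction to the statement itself, which any complete proof must confront (for instance by redefining $\hat\A(\mbf j)$ with independent $v$- and $t$-exponents so that the diagonal elements form a group); your proof, like the paper, silently passes over it. Second, the injectivity argument stops exactly where the real work begins: what you need is that right multiplication by $J_\pm$ is injective on $\mcal U$, i.e.\ that no nonzero element of $\mcal U$ is supported entirely on basis elements $\{\A\}$ whose column vector $\co(\A)$ has a fixed parity. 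That statement is not a consequence of the linear independence of the $\{D_\lambda\}$; it requires a basis (or character-span) theorem for $\mcal U$, which is also the unproven content of the earlier Corollary you invoke. As written, your injectivity step is a plan resting on two pillars that neither you nor the paper establishes.
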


\subsection{Defining relations of $\mcal S$}

 For any $i\in [1,  n-1]$,  $a\in [1,  n]$, $m \in [1, n-1]$ , we define the function $E_i,  F_i,  A_a^{\pm 1},  B_a^{\pm 1}$ to be the same function in $\mcal S$ . we further define
\begin{align}
\begin{split}
J_+ (V,  V') &=
\begin{cases}
1,  &\mbox{if}\; V_m=V_{m+1}, |V_m| \equiv d \ mod\ 2 ; \\
0,  &\mbox{otherwise}.
\end{cases}
\\
J_- (V,  V') &=
\begin{cases}
1,  &\mbox{if}\; V_m=V_{m+1}, |V_m| \equiv d - 1\ mod\ 2 ; \\
0,  &\mbox{otherwise}.
\end{cases}
\\
J_0 = 1 - J_+ - J_-.
\end{split}
\end{align}

\begin{prop}\label{prop 5}
The functions $E_i,  F_i$,   $A_a^{\pm 1}$ ,  $B_a^{\pm 1}$, and$J_{\alpha}$  in $\mcal S$,  for any $i\in [1, n-1]$,  $a\in [1, n]$, $\alpha \in \{+, -, 0\}$ ,    satisfy
the  relations in proposition \ref{prop3} and the following relations.
\begin{equation}\label{relation r1}
J_+ +  J_0 +J_- = 1, J_{\alpha}J_{\beta} = \delta_{\alpha, \beta}J_{\alpha},  J_{\alpha}A_{a} = A_{a}J_{\alpha}, J_{\alpha}B_{a} = A_{a}B_{\alpha};
\end{equation}
\begin{equation}\label{relation r2}
E_iJ_{\pm} = (1 - \delta_{i, m})J_{\pm}E_i,  J_{\pm}E_i = (1 - \delta_{i, m+1})E_iJ_{\pm};
\end{equation}
\begin{equation}\label{relation r3}
F_iJ_{\pm} = (1 - \delta_{i, m+1})J_{\pm}F_i,  J_{\pm}F_i = (1 - \delta_{i, m})F_iJ_{\pm};
\end{equation}
\begin{equation}\label{relation r4}
 J_{\pm}E_mE_{m+1} =E_mE_{m+1}J_{\mp};
\end{equation}
\begin{equation}\label{relation r5}
 J_{\pm}F_{m+1}F_{m} =F_{m+1}F_{m}J_{\mp};
\end{equation}
\begin{equation}\label{relation r6}
 J_{\pm}E_{m}F_{m} - E_{m}F_{m}J_{\mp} = \frac{A_mB_{m+1}-B_mA_{m+1}}{v-v^{-1}}(J_{\pm} - J_{\mp});
\end{equation}
\begin{equation}\label{relation r7}
 J_{\pm}F_{m+1}E_{m+1} - F_{m+1}E_{m+1}J_{\mp} = \frac{B_{m+1}A_{m+2}-A_{m+1}B_{m+2}}{v-v^{-1}}(J_{\pm} - J_{\mp}).
\end{equation}
\end{prop}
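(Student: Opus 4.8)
The plan is to verify every identity by evaluating both sides as $G$-invariant $\mcal A$-valued functions on $\mscr X\times\mscr X$ and comparing them pointwise on each $G$-orbit, using the convolution product of $\mcal S$. First I would observe that the functions $E_i,F_i,A_a^{\pm1},B_a^{\pm1}$ are literally the same elements of $\mcal S$ as in Proposition \ref{prop3}, so all the relations collected there hold verbatim and require no new argument; only the relations \eqref{relation r1}--\eqref{relation r7} involving the $J_\alpha$ remain. The structural fact I would record at the outset is that each $J_\alpha$ is supported on the diagonal $V=V'$ and its value there depends only on whether $V_m=V_{m+1}$ and on the parity of $|V_m|$. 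Consequently a convolution $J_\alpha * f$ (resp. $f*J_\alpha$) simply multiplies $f(V,V')$ by the indicator of the corresponding condition read off at the left flag $V$ (resp. the right flag $V'$).

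For \eqref{relation r1} the three functions $J_+,J_0,J_-$ are indicators of a partition of the diagonal, hence orthogonal idempotents summing to the identity of $\mcal S$ (the indicator of $V=V'$); since $A_a,B_a$ are also diagonal they commute with each $J_\alpha$, so this is immediate. For \eqref{relation r2} and \eqref{relation r3} I would split on the moving index $i$. When $i\neq m,m+1$ the operators $E_i,F_i$ alter only the $i$-th subspace and leave $V_m,V_{m+1}$ untouched, so the indicator conditions at $V$ and at $V'$ coincide and $E_i,F_i$ commute with $J_\pm$. When the moving index equals $m$ or $m+1$, the flag inclusion $V_m\subseteq V_{m+1}$ becomes incompatible with the condition $V_m=V_{m+1}$ imposed by $J_\pm$: for instance $E_m(V,V')\neq0$ gives $V_m\overset{1}{\supset}V'_m$ and $V_{m+1}=V'_{m+1}$, so the additional requirement $V'_m=V'_{m+1}$ would force the chain $V_m\subseteq V_{m+1}=V'_{m+1}=V'_m\subsetneq V_m$, a contradiction. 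These vanishings are exactly what the Kronecker factors $(1-\delta_{i,m})$ and $(1-\delta_{i,m+1})$ encode.

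For \eqref{relation r4} and \eqref{relation r5} I would first compute the compound convolutions $E_m * E_{m+1}$ and $F_{m+1} * F_m$, each supported on pairs $(V,V')$ whose flags differ by exactly one dimension in each of the steps $m$ and $m+1$ and agree elsewhere. On this support a short dimension count gives $V_m=V_{m+1}\iff V'_m=V'_{m+1}$ while the parity of $|V_m|$ differs from that of $|V'_m|$ by one, so each operator preserves the collapsed locus and reverses the parity. Reading off the indicators at $V$ versus $V'$ then yields precisely the interchange $J_\pm\leftrightarrow J_\mp$.

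The main work is in \eqref{relation r6} and \eqref{relation r7}, and the decisive observation there is a vanishing on the collapsed locus. On the support of $J_\pm$ one has $V_m=V_{m+1}$, and then the reversed product $F_m*E_m$ is identically zero, since there is no flag strictly enlarging $V_m$ inside $V_{m+1}=V_m$; dually $E_{m+1}*F_{m+1}$ vanishes on this locus for want of a hyperplane of $V_{m+1}=V_m$ containing $V_m$. Hence $J_\pm E_mF_m=J_\pm(E_mF_m-F_mE_m)$ and $E_mF_mJ_\mp=(E_mF_m-F_mE_m)J_\mp$, and similarly for the index $m+1$. I would then substitute the already established relation R3 of Proposition \ref{prop3}, which identifies $E_mF_m-F_mE_m$ with the diagonal element $(A_mB_{m+1}-B_mA_{m+1})/(v-v^{-1})$; as this element and the $J_\alpha$ are all diagonal they commute, and subtracting the two expressions produces exactly the factor $(J_\pm-J_\mp)$ on the right-hand side. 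I expect this reduction---spotting the vanishing of the reversed products on the collapsed locus and keeping the parity bookkeeping straight so that R3 applies cleanly---to be the only genuinely delicate step, everything else being routine pointwise evaluation.
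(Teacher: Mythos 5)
Your proposal is correct, and its treatment of \eqref{relation r6}--\eqref{relation r7} takes a genuinely different route from the paper. For \eqref{relation r1}--\eqref{relation r5} the two arguments coincide in substance: the paper also proves \eqref{relation r5} by support analysis, computing $F_{m+1}F_{m}$ and checking that on its support the collapsed condition $V_m=V_{m+1}$ holds at $V$ if and only if $V'_m=V'_{m+1}$ holds at $V'$ while the parity of $|V_m|$ flips --- exactly your argument. The divergence is at \eqref{relation r6}--\eqref{relation r7}: the paper proves \eqref{relation r7} by explicitly computing the convolution $F_{m+1}E_{m+1}$ (a diagonal part carrying a $q$-integer coefficient plus an off-diagonal part supported where $\dim(V_{m+1}\cap V'_{m+1})=\dim V_{m+1}-1$), and then verifying pointwise that $J_{+}$, $J_{-}$ kill the off-diagonal part and that the surviving diagonal values match the right-hand side. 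You instead observe that $J_{\pm}$ annihilates the reversed products $F_mE_m$ and $E_{m+1}F_{m+1}$ on the relevant side, insert them for free, and invoke relation R3 of Proposition \ref{prop3} plus commutativity of diagonal elements to produce the factor $(J_{\pm}-J_{\mp})$. Your reduction is cleaner: it needs no new convolution computation and makes structurally clear why the Cartan element of R3 appears, whereas the paper's calculation in effect re-derives R3 on the collapsed locus. One caveat, which applies equally to the paper (whose proof only claims the \emph{first} identity of each of the first three rows): relations \eqref{relation r2}--\eqref{relation r3} cannot be read literally as conjunctions of two equations, since at $i=m$ the second equation of \eqref{relation r2} would force $J_{\pm}E_m=E_mJ_{\pm}=0$, while in fact $J_{\pm}E_m$ is in general nonzero (it is supported on pairs where $V'_m$ is a hyperplane in $V_m=V_{m+1}$ of the appropriate parity). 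The true content --- which is what your case analysis actually verifies --- is the one-sided vanishing $E_mJ_{\pm}=0$, $J_{\pm}E_{m+1}=0$, $J_{\pm}F_m=0$, $F_{m+1}J_{\pm}=0$ together with commutation for $i\neq m,\,m+1$; your reading of the Kronecker factors is the correct one.
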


\begin{proof}
The first identity in the first three rows of the relations in the proposition are straightforward.
Let $\lambda'_i=|V'_i/V'_{i-1}|$.
We show the identity $\ref{relation r5}$, by a direct calculation.
We have
\begin{equation*}
    F_{m+1}F_{m}(V, V')=\left\{\begin{array}{l}
      v^{-\lambda'_{m+2}-\lambda_{m+1}'}t^{\lambda'_{m+2}+\lambda_{m+1}'} , \quad {\rm if}\  V_m\overset{1}{\subseteq} V_m' {\rm and}\ V_{m+1}\overset{1}{\subseteq} V_{m+1}' ,  \\
      0,  \hspace{159pt} {\rm otherwise}.
    \end{array}
  \right.
  \end{equation*}

 \begin{equation*}
    J_+F_{m+1}F_{m}(V, V')=\left\{\begin{array}{l}
      v^{-\lambda'_{m+2}-\lambda_{m+1}'}t^{\lambda'_{m+2}+\lambda_{m+1}'} , \quad {\rm if}\  V_m\overset{1}{\subseteq} V_m' , \ V_{m+1}\overset{1}{\subseteq} V_{m+1}',  V_m = V_{m+1}\\
      \quad \quad \quad \quad \quad \quad \quad\quad\quad\quad\quad{\rm and} |V_m| \equiv d\ mod\ 2  ,  \\
      0,  \hspace{159pt} {\rm otherwise}.
    \end{array}
  \right.
  \end{equation*}
  \begin{equation*}
   F_{m+1}F_{m}J_-(V, V')=\left\{\begin{array}{l}
      v^{-\lambda'_{m+2}-\lambda_{m+1}'}t^{\lambda'_{m+2}+\lambda_{m+1}'} , \quad {\rm if}\  V_m\overset{1}{\subseteq} V_m' , \ V_{m+1}\overset{1}{\subseteq} V_{m+1}',  V_m = V_{m+1}\\
      \quad \quad \quad \quad \quad \quad \quad\quad\quad\quad\quad{\rm and} |V_m| \equiv d\ mod\ 2  ,  \\
      0,  \hspace{159pt} {\rm otherwise}.
    \end{array}
  \right.
  \end{equation*}
  The first part of the identity $\ref{relation r5}$ follows,  all other identities in  $\ref{relation r5}$ and $\ref{relation r4}$ can be shown similarly.

Then,
We  show the  identity $\ref{relation r7}$.
By a direct calculation,  we have
 \begin{equation*}
    F_{m + 1}E_{m + 1}(V, V')=\left\{\begin{array}{l}
     \frac{v^{2\lambda_{m+2}' - 1}}{v^2 - 1}v^{-\lambda_{m+2}' - \lambda_{m+1}' + 1} t^{\lambda_{m+2}' + \lambda_{m+1}'}, \quad {\rm if}\  V = V'\\
     v^{-\lambda_{m+2}' - \lambda_{m+1}' + 1} t^{\lambda_{m+2}' + \lambda_{m+1}'},  \quad {\rm if}\ |V_{m+1} \cap V_{m+1}'| = |V_{m+1}| - 1 = |V_{m+1}'| - 1, \\
      0,  \hspace{159pt} {\rm otherwise}.
    \end{array}
  \right.
  \end{equation*}

  \begin{equation*}
    (J_+F_{m + 1}E_{m + 1} - F_{m + 1}E_{m + 1}J_- )(V, V')=\left\{\begin{array}{l}
     \frac{v^{\lambda_{m+2}'}t^{\lambda_{m+2}'} - v^{-\lambda_{m+2}'}t^{\lambda_{m+2}'}}{v - v^{-1}}, \quad {\rm if}\  V = V', V_m = V_{m+1}, \\
    \quad \quad \quad \quad \quad \quad \quad\quad\quad\quad\quad{\rm and} |V_m| \equiv d\ mod\ 2  ,  \\
    \frac{v^{-\lambda_{m+2}'}t^{\lambda_{m+2}'}- v^{\lambda_{m+2}'}t^{\lambda_{m+2}'}}{v - v^{-1}}, \quad {\rm if}\  V = V', V_m = V_{m+1}, \\
    \quad \quad \quad \quad \quad \quad \quad\quad\quad\quad\quad{\rm and} |V_m| \equiv d - 1\ mod\ 2  ,  \\
      0,  \hspace{159pt} {\rm otherwise}.
    \end{array}
  \right.
  \end{equation*}
   \begin{equation*}
    \frac{B_{m+1}A_{m+2}-A_{m+1}B_{m+2}}{v-v^{-1}}(J_{\pm} - J_{\mp})(V, V')=\left\{\begin{array}{l}
     \frac{v^{\lambda_{m+2}'}t^{\lambda_{m+2}'} - v^{-\lambda_{m+2}'}t^{\lambda_{m+2}'}}{v - v^{-1}}, \quad {\rm if}\  V = V', V_m = V_{m+1}, \\
    \quad \quad \quad \quad \quad \quad \quad\quad\quad\quad\quad{\rm and} |V_m| \equiv d\ mod\ 2  ,  \\
    \frac{v^{-\lambda_{m+2}'}t^{\lambda_{m+2}'}- v^{\lambda_{m+2}'}t^{\lambda_{m+2}'}}{v - v^{-1}}, \quad {\rm if}\  V = V', V_m = V_{m+1}, \\
    \quad \quad \quad \quad \quad \quad \quad\quad\quad\quad\quad{\rm and} |V_m| \equiv d - 1\ mod\ 2  ,  \\
      0,  \hspace{159pt} {\rm otherwise}.
    \end{array}
  \right.
  \end{equation*}
The first part of the identity $\ref{relation r7}$ follows,  all other identities in  $\ref{relation r7}$ and $\ref{relation r6}$ can be shown similarly.

\end{proof}

\begin{cor} \label{S-new-generator}
The algebra $\mbb Q(v, t) \otimes_{\mcal A} \mcal S$ is generated by the functions  $E_i,  F_i$,   $A_a^{\pm 1}$ ,  $B_a^{\pm 1}$, and$J_{\alpha}$  in $\mcal S$,  for any $i\in [1, n - 1]$,  $a\in [1,  n]$, $\alpha \in \{+, -, 0\}$.
\end{cor}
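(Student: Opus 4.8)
The plan is to deduce this immediately from Corollary~\ref{S-generator}, which already asserts that $\mbb Q(v,t)\otimes_{\mcal A}\mcal S$ is generated by $E_i, F_i, A_a^{\pm1}, B_a^{\pm1}$ alone. The only genuine content to verify is that each new function $J_\alpha$ actually lies in $\mcal S$; once that is in hand, enlarging an already-generating collection cannot shrink the subalgebra it generates, so the enlarged family generates the whole algebra.

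First I would observe that $J_+$, $J_-$, and $J_0 = 1 - J_+ - J_-$ are, by their defining formulas, $\mcal A$-valued functions on $\mscr X \times \mscr X$ that are constant on $G$-orbits: the value of each on a pair $(V,V')$ depends only on whether $V=V'$, on whether $V_m = V_{m+1}$, and on the parity of $|V_m|$, all of which are $G$-invariant data. Hence $J_\alpha \in \mcal A_G(\mscr X \times \mscr X) = \mcal S \subseteq \mbb Q(v,t)\otimes_{\mcal A}\mcal S$ for every $\alpha \in \{+,-,0\}$.

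Next, by Corollary~\ref{S-generator} the subalgebra of $\mbb Q(v,t)\otimes_{\mcal A}\mcal S$ generated by $E_i, F_i, A_a^{\pm1}, B_a^{\pm1}$ is already the whole algebra. Therefore the subalgebra generated by the larger collection $\{E_i, F_i, A_a^{\pm1}, B_a^{\pm1}\} \cup \{J_\alpha\}$ contains this generating set, and so coincides with $\mbb Q(v,t)\otimes_{\mcal A}\mcal S$. This completes the argument, and no serious obstacle arises: the step requiring the most care is simply confirming $G$-invariance of the conditions defining $J_\alpha$, which is routine.

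If one instead wants an explicit expression of the $J_\alpha$ in the old generators, which is not needed for the statement but is conceptually clarifying, I would use that $J_+, J_-, J_0$ are diagonal functions lying in the commutative subalgebra generated by the $A_a, B_a$. Indeed, on the diagonal $V=V'$ the function $A_a$ acts with eigenvalue $v^{|V_a/V_{a-1}|}t^{|V_a/V_{a-1}|}$, so the condition $V_m = V_{m+1}$ is the condition that the jump dimension $|V_{m+1}/V_m|$ vanishes, which is cut out by a Lagrange-type interpolation polynomial in $A_{m+1}$ using relation (R6); the parity constraint on $|V_m|$ can likewise be isolated by a polynomial identity in the product $A_1 \cdots A_m$, whose eigenvalue records $|V_m|$. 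The main, and only mild, obstacle on this alternative route is the bookkeeping needed to separate the even and odd parity classes by interpolation; but since Corollary~\ref{S-generator} already yields the result, this explicit computation can be avoided entirely.
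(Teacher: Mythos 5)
Your proof is correct and coincides with the paper's (implicit) argument: the paper states this as an unproved corollary, the point being exactly that the $J_\alpha$ are $G$-invariant $\mcal A$-valued functions on $\mscr X\times\mscr X$ and hence lie in $\mcal S$, so that adjoining them to the generating set of Corollary~\ref{S-generator} trivially yields another generating set. Your optional interpolation remark expressing the $J_\alpha$ as polynomials in the $A_a$ is a correct bonus but, as you note, unnecessary.
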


\subsection{Limit algebra $\mcal K''$ }

Let $I' = I - E_{m+1, m+1}$ be the identity matrix.  We set
$
{}_pA=A+2pI'
$
Let
$\widetilde{\Theta}'=\{M | M\in \widetilde{\Theta},  M_{m+1, m+1} \geq 0\}$ .

Let
$$\mcal K''= \mbox{span}_{\mcal A} \{ \{\A\} | \A\in \wt{\Theta}' \},
$$
where the notation $\{\A\}$ is a formal  symbol
bearing no geometric meaning.
Let $v', t'$ be a independent indeterminates,  and we denote by $\mfk R$ the ring $Q(v, t)[v', t']$ .

\begin{prop}
\label{prop5''}
Suppose that $\A_1,  \A_2, \cdots,  \A_r \ (r\geq 2)$ are matrices in $\widetilde{\Theta}'$
such that ${\rm co}(\A_i)={\rm ro}(\A_{i+1})$  for $1\leq i \leq r-1$.
There exist $\z_1,  \cdots,  \z_m\in \widetilde{\Theta}'$,  $G'_j(v, v', t, t')\in \mfk R$ and $p_0\in \mbb N$ such that in $\mcal S_d$ for some $d$,   we have
$$\{{}_p \A_1\} * \{{}_p\A_2\} * \cdots *\{{}_p \A_r\}=\sum_{j=1}^mG'_j(v, v^{-p}, t, t^{p})\{{}_p \z_j\}, \quad
\forall  p\geq p_0. $$
\end{prop}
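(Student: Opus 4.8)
The plan is to adapt the proof of Proposition~\ref{prop5} (hence of Proposition~4.2 in \cite{BLM90}) to the modified shift ${}_pA = A + 2pI'$, where $I' = I - E_{m+1,m+1}$. First I would reduce to the case $r=2$: by associativity of the convolution product the $r$-fold product is evaluated one factor at a time, and by Theorem~\ref{thm1} every $\{\A_i\}$ equals, up to lower terms, a product of \emph{simple} elements $\{\B\}$ with $\B - rE_{h,h+1}$ or $\B - rE_{h+1,h}$ diagonal. It therefore suffices to control a single multiplication of one of the two types governed by the formulas (\ref{eq22}) and (\ref{eq24}), and to check that the matrices $\A_t$, $\A(h,t)$ produced there keep the $(m+1,m+1)$ entry nonnegative --- which is exactly what the summation constraints $t_u \le a_{m+1,u}$ in those formulas ensure --- so that the whole recursion stays inside $\widetilde{\Theta}'$ and hence inside $\mcal K''$.

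For the simple $r=2$ case I would substitute ${}_p\A = \A + 2pI'$ into (\ref{eq22}) [resp.\ (\ref{eq24})] and isolate the $p$-dependence. The only entries that move are the diagonal entries $a_{hh}$ and $a_{h+1,h+1}$, and each moves by $2p$ unless its index equals $m+1$. In the $u=h$ factor of the binomial product the shifted entry produces, following Proposition~\ref{prop5}, the factor $\prod_{1\le i\le t_h}\frac{v^{-2(a_{hh}+t_h-i)}v'^{4}-1}{v^{-2i}-1}$, where the doubling of the shift turns the $v'^2$ of Proposition~\ref{prop5} into $v'^{4}$; all other binomial factors $\overline{\binom{a_{hu}+t_u}{t_u}}_v$ are unchanged, their $a_{hu}$ being off-diagonal. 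The shift enters $\alpha(t)$ and $\beta(t)$ (resp.\ $\alpha'(t)$, $\beta'(t)$) only through these diagonal entries, adding terms linear in $p$; under the specialization $v' = v^{-p}$, $t' = t^p$ these linear-in-$p$ contributions become fixed powers of $v'$ and $t'$. Collecting everything, the structure constant attached to each $t$ takes the form $G'_t(v,v^{-p},t,t^p)$ with $G'_t \in \mfk R = \mathbb{Q}(v,t)[v',t']$ independent of $p$, and the surviving matrix ${}_p\z$ (with $\z = \A_t$ or $\A(h,t)$) is $p$-independent; the threshold $p_0$ is the smallest $p$ making all shifted diagonal entries large enough that the formulas apply verbatim.

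The step I expect to be the main obstacle is the bookkeeping at the index $m+1$. Because $I'$ omits the $(m+1,m+1)$ entry, when $h=m+1$ or $h+1=m+1$ one of the two governing diagonal entries does \emph{not} grow with $p$, so the absorption of the $p$-dependence into $v'$ and $t'$ is asymmetric and must be verified separately in those cases; this is precisely where the factor $2$ in the shift is forced, since it keeps the parity of $|V_m|$ --- the datum defining $J_+$ and $J_-$ --- unchanged and thus makes the stabilization compatible with the $J$-structure built into $\widetilde{\Theta}'$. Once these boundary cases are checked to still give coefficients in $\mfk R$ and a $p$-independent support $\{{}_p\z_j\}$, the remainder of the argument of Proposition~4.2 in \cite{BLM90} (equivalently Proposition~\ref{prop5}) transfers without change.
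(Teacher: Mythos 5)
Your reduction to the $r=2$ case with one simple factor, and your account of the binomial factor (the $v'^{2}$ of Proposition \ref{prop5} becoming $v'^{4}$ because the diagonal entries now shift by $2p$), follow the same route as the paper's proof of Proposition \ref{prop5}; the paper, for what it is worth, states the present proposition with no proof at all. The gap is in the step you explicitly defer: the boundary cases do \emph{not} ``still give coefficients in $\mfk R$'', and the reason is a sign your summary glosses over. In the generic case $h\neq m,m+1$ the exponent $\beta(t)$ in (\ref{eq22}) is $p$-independent not because ``linear-in-$p$ contributions become fixed powers of $v'$'', but because the contribution $+2p\sum_{l\le h}t_l$ from $a_{hh}\mapsto a_{hh}+2p$ is exactly cancelled by the contribution $-2p\sum_{l\le h}t_l$ from $a_{h+1,h+1}\mapsto a_{h+1,h+1}+2p$; this cancellation is what the proof of Proposition \ref{prop5} silently relies on (there $v'$ enters only through the binomial factor and $t'$ only through $t^{\alpha}$). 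A \emph{positive} multiple of $p$ in the exponent of $v$ is a \emph{negative} power of $v'=v^{-p}$, hence lies outside $\mfk R=\mbb Q(v,t)[v',t']$.

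At the special index the cancellation is destroyed, not merely ``asymmetric''. If $\B-rE_{m,m+1}$ is diagonal (the $E_m$-type generator), then in $\{{}_p\B\}*\{{}_p\A\}$ the row-$m$ diagonal entry shifts by $2p$ while the row-$(m+1)$ entry does not, so $\beta(t)$ gains $+2p\sum_{l\le m}t_l$, and the structure constant acquires a factor $v'^{-2\sum_{l\le m}t_l}$. Concretely, take $n=2$, $m=1$, $\A_1=\bigl(\begin{smallmatrix}0&1\\0&1\end{smallmatrix}\bigr)$, $\A_2=\bigl(\begin{smallmatrix}0&0\\1&1\end{smallmatrix}\bigr)$, both in $\widetilde{\Theta}'$ with ${\rm co}(\A_1)={\rm ro}(\A_2)$; formula (\ref{eq22}) gives
\begin{equation*}
\{{}_p\A_1\}*\{{}_p\A_2\}
=\Bigl\{{}_p\bigl(\begin{smallmatrix}0&1\\1&0\end{smallmatrix}\bigr)\Bigr\}
+t^{2p+1}\,\frac{v^{-2p-3}-v^{2p-1}}{v^{-2}-1}\,
\Bigl\{{}_p\bigl(\begin{smallmatrix}1&0\\0&1\end{smallmatrix}\bigr)\Bigr\},
\end{equation*}
and the term $v^{2p-1}t^{2p+1}$ cannot be written as $G'(v,v^{-p},t,t^{p})$ with $G'$ a polynomial in $v',t'$, since the functions $p\mapsto v^{-ap}t^{bp}$ for distinct $(a,b)\in\mbb Z^{2}$ are linearly independent over $\mbb Q(v,t)$ and a polynomial only produces exponents $a,b\ge 0$. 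The same failure occurs when $\C-rE_{m+2,m+1}$ is diagonal (the $F_{m+1}$-type), i.e.\ precisely for the two kinds of products this section of the paper records for $\mcal K''$. So the plan cannot be completed by transplanting Proposition \ref{prop5}: one must either renormalize the bracket $\{\cdot\}$ by a twist involving the off-diagonal entries of row $m+1$, or enlarge $\mfk R$ to Laurent polynomials in $v'$ (which would still permit the specialization $v'=t'=1$ defining $\mcal K''$), or impose the centro-symmetry of Fan--Li's setting, in which every generator meets two symmetric pairs of rows and the cancellation is restored. Your remark about the parity of $|V_m|$ explains why the shift is $2pI'$ rather than $pI'$, but it has no bearing on this polynomiality problem.
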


By specialization $v', t'$ at $v'=1, t' = 1$,
there is a unique associative $\mcal A$-algebra structure on $\mcal K$,  without unit,   where
 the product is given by
 $$\{\A_1\} \cdot \{\A_2\}\cdot \dots \cdot  \{\A_r\} =\sum_{j=1}^m G'_j(v, 1, t, 1)[\z_j]$$
 if $\A_1, \cdots,  \A_r$ are as in Proposition \ref{prop5''}.

Let $\A$ and  $\B \in \wt{\Theta}$ be chosen such that $\B -rE_{m, m+1}$
is  diagonal for some $ r\in \mbb N$
satisfying ${\rm co}(\B)={\rm ro}(\A)$ .  Then we have
\begin{equation}\label{eq58}
\{ \B\} \cdot  \{\A\}
 =\sum_{t} v^{\beta(t)}t^{\alpha(t)}\prod_{u=1}^N
\overline{\begin{pmatrix}a_{hu}+t_u\\ t_u \end{pmatrix}}_{\!\!\!{}v}\
\{\A_{t}\},
\end{equation}
where the sum is taken over all $t=(t_u)\in \mbb N^n$ such that
$\sum_{u=1}^n t_u=r$ and $t_u \leq a_{m+1, u}$,
$\alpha(t), \beta(t)$,  $\A_{t} \in \widetilde{\Theta}'$are defined in (\ref{eq22}).

Similarly,  if $\A,  \C \in \widetilde{\Theta}$  are chosen such that $\C-rE_{m+2, m+1}$
is diagonal  for some $1\leq h< n,  r\in \mbb N$ satisfying ${\rm co}(\C)={\rm ro}(\A)$ ,  then we have
\begin{align}\label{eq57}
\{\C\} \cdot \{\A\}
 =\sum_{t}v^{\beta'(t)}t^{\alpha'(t)}\prod_{u=1}^N
\overline{\begin{pmatrix} a_{h+1, u}+t_u\\ t_u\end{pmatrix}}_{\!\!\!{}v}
\ \{\A(h,  t) \},
\end{align}
where  the sum is  taken over all $t=(t_u)\in \mbb N^n$ such that $\sum_{u=1}^n t_u=r$and $t_u \leq a_{m+1, u}$,
$, \alpha'(t), \beta'(t)$ $\A(h,  t) \in \widetilde{\Theta}'$ are defined in (\ref{eq24}).

\subsection{The algebra $\mcal U''$}

In this section,  we shall define a new algebra $\mcal U''$ in the  completion of $\mcal K''$ similar to  ~\cite[Section 5]{BLM90}.

Let $\hat{\mcal K''}$ be the $\mbb Q(v, t)$-vector space of all formal sum
$\sum_{\A\in \tilde{\Theta}'}\xi_{\A} \{\A\}$ with $\xi_{\A}\in \mbb Q(v, t)$ and  a locally finite property,  i. e. ,
for any ${\mbf t}\in \mbb Z^n$,  the sets $\{\A\in \tilde{\Theta}'|{\rm ro}(\A)={\mbf t},  \xi_{\A} \neq 0\}$
and
$\{\A\in \widetilde{\Theta}' | {\rm co}(\A)={\mbf t},  \xi_{\A} \neq 0\}$ are finite.
The space $\hat{\mcal K''}$ becomes an  associative algebra over $\mbb Q(v, t)$
 when equipped  with  the following multiplication:
$$
\sum_{\A\in \tilde{\Theta}' }\xi_{\A} \{\A\}   \cdot \sum_{\B \in \tilde{\Theta}'} \xi_{\B} \{\B\}
=\sum_{\A,  \B} \xi_{\A} \xi_{\B} \{\A\} \cdot \{\B\},
$$
where the product $\{\A\} \cdot \{\B\}$ is taken  in $\mcal K''$.
This is shown in exactly the same as ~\cite[Section 5]{BLM90}.

Observe that the algebra $\hat{\mcal K''}$ has a unit element $\sum\{\md\}$,  the  summation  of  all diagonal matrices.

We define the following  elements in $\hat{\mcal K''}$.
For any nonzero  matrix $\A \in \wt{\Theta}'$,
let $\hat{\A}$ be the matrix obtained
by replacing diagonal entries of $\A$ by zeroes.
We set
$$
\Theta^{0}= \{ \hat{\A} | \A\in \wt{\Theta}^{'} \}.
$$

For any $\hat{\A}$ in $\Theta^{0}$ and ${\mbf j}=(j_1, \cdots,  j_n)\in \mbb Z^n$,  we define
\begin{equation}
\hat{\A} ({\mbf j})=\sum_{\lambda}v^{\lambda_1j_1+\cdots+\lambda_{n}j_{n}}t^{\lambda_1|j_1|+\cdots+\lambda_{n}|j_{n}|}\{ \hat{\A} + D_{\lambda} \}\quad
\end{equation}
where the  sum runs through all $\lambda=(\lambda_i)\in \mbb Z^n$ such that
$\hat{\A} + D_{\lambda},   \in \wt{\Theta}'$, where $D_{\lambda}$ is the diagonal matrices with diagonal entries$(\lambda_i). $

And we also define
\begin{equation}
J_{+} = \sum_{\lambda \in S_0}\{ D_{\lambda} \},
\end{equation}
\begin{equation}
J_{-} = \sum_{\lambda \in S_1}\{ D_{\lambda} \},
\end{equation}
\begin{equation}
J_{0} = 1 - J_{+} - J_{-}.
\end{equation}
Where $S_0 = \{\lambda | \lambda_{m+1} = 0,  \sum_{i = 1}^{m} \lambda_i \equiv \sum_{i = 1}^{n} \lambda_i\ mod \ 2 \}$,
$S_1 = \{\lambda | \lambda_{m+1} = 0,  \sum_{i = 1}^{m} \lambda_i \equiv \sum_{i = 1}^{n} \lambda_i -1 \ mod \ 2 \}$

For $i\in [1, n-1]$,  let
\begin{equation*}
E_i=E_{i, i+1}(0)\quad{\rm and}\quad
F_i=E_{i+1,  i}(0).
\end{equation*}

Let $\mcal U$ be the subalgebra of $\hat{\mcal K}$ generated by $E_i,  F_i,  0(\mbf j),  J_{\alpha}$ for all $i\in [1, n-1]$, $\mbf j\in \mbb Z^n$ and $\alpha \in \{+, -, 0\}$.

\begin{prop}\label{prop-a}
The following relations hold in $\mcal U''$.
 \allowdisplaybreaks
\begin{eqnarray}
\label{relation rr1}
&&J_+ +  J_0 +J_- = 1, J_{\alpha}J_{\beta} = \delta_{\alpha, \beta}J_{\alpha},  J_{\alpha}A_{a} = A_{a}J_{\alpha}, J_{\alpha}B_{a} = B_{a}J_{\alpha};\\
\label{relation rr2}
&&E_iJ_{\pm} = (1 - \delta_{i, m})J_{\pm}E_i,  J_{\pm}E_i = (1 - \delta_{i, m+1})E_iJ_{\pm};\\
&&F_iJ_{\pm} = (1 - \delta_{i, m+1})J_{\pm}F_i,  J_{\pm}F_i = (1 - \delta_{i, m})F_iJ_{\pm};\\
\label{relation rr4}
 &&J_{\pm}E_mE_{m+1} =E_mE_{m+1}J_{\mp};\\
&& J_{\pm}F_{m+1}F_{m} =F_{m+1}F_{m}J_{\mp};\\
\label{relation rr6}
&& J_{\pm}E_{m}F_{m} - E_{m}F_{m}J_{\mp} = \frac{A_mB_{m+1}-B_mA_{m+1}}{v-v^{-1}}(J_{\pm} - J_{\mp});\\
&& J_{\pm}F_{m+1}E_{m+1} - F_{m+1}E_{m+1}J_{\mp} = \frac{B_{m+1}A_{m+2}-A_{m+1}B_{m+2}}{v-v^{-1}}(J_{\pm} - J_{\mp}). \\
\label{eqr60}
&&0(\mbf j)0(\mbf j')=0(\mbf j')0(\mbf j), \\
\label{rA-ii}
&&0(\mbf j)E_h=v^{j_h-j_{h+1}}t^{|j_h|-|j_{h+1}|}E_h0(\mbf j), \
       0(\mbf j)F_h=v^{-j_h+j_{h+1}}t^{-|j_h|+|j_{h+1}|}F_h0(\mbf j), \\
 \label{rA-iii}
&&t(E_hF_h-F_hE_h)=(v-v^{-1})^{-1}(0(\underline h-\underline{h+1})-0(\underline{h+1} -\underline h)),  h \neq m + 1\\
\label{rA-iv}
  & &E_i^2E_{i+1} - (vt + v^{-1}t)E_iE_{i+1}E_{i} + t^{2}E_{i+1}E_{i}^{2} = 0, \\
  & &t^{2}E_{i+1}^2E_{i} - (vt + v^{-1}t)E_{i+1}E_{i}E_{i+1} + E_{i}E_{i+1}^{2} = 0, \\
  & &F_i^2F_{i+1} - (vt^{-1} + v^{-1}t^{-1})F_iF_{i+1}F_{i} + t^{-2}F_{i+1}F_{i}^{2} = 0, \\
  & &t^{-2}F_{i+1}^2F_{i} - (vt^{-1} + v^{-1}t^{-1})F_{i+1}F_{i}F_{i+1} + F_{i}F_{i+1}^{2} = 0.
\end{eqnarray}
where $\mbf j,  \mbf j'\in \mbb Z^n$,  $h,  i,  j\in [1,  n]$ and  $\underline i \in \mbb N^n$ is the vector whose $i$-th entry is 1 and 0 elsewhere.
\end{prop}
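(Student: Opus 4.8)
The plan is to verify every relation by a direct computation in the completed convolution algebra $\hat{\mcal K''}$, expanding each generator as an $\mcal A$-linear combination of the formal symbols $\{\A\}$ and multiplying through the structure constants (\ref{eq58}) and (\ref{eq57}); throughout, $A_a$ and $B_a$ are read as $0(\underline a)$ and $0(-\underline a)$, following the convention of Proposition \ref{Upsilon}. The relations (\ref{eqr60})--(\ref{rA-iv}) involve only $E_i$, $F_i$ and $0(\mbf j)$, whose defining formulas are identical to those in $\mcal U$, so their verification is a word-for-word copy of the argument for the corresponding relations in Section \ref{sec6}, and I will simply refer to it. The sole substantive change is the hypothesis $h\neq m+1$ in (\ref{rA-iii}): on $\wt{\Theta}'$ the $(m+1,m+1)$-entry is constrained to be nonnegative, so the telescoping sum over $\lambda_{m+1}\in\mbb Z$ that collapses $t(E_hF_h-F_hE_h)$ to the clean difference $0(\underline h-\underline{h+1})-0(\underline{h+1}-\underline h)$ is truncated exactly at $h=m+1$; I will record this truncation and restrict the identity accordingly.

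The genuinely new content is (\ref{relation rr1})--(\ref{relation rr6}) together with its two $F$-analogues. The key structural inputs are that $J_+=\sum_{\lambda\in S_0}\{D_\lambda\}$, $J_-=\sum_{\lambda\in S_1}\{D_\lambda\}$ and every $0(\mbf j)$ are supported on diagonal symbols, and that $\{D_\lambda\}\{D_{\lambda'}\}=\delta_{\lambda,\lambda'}\{D_\lambda\}$. Since $S_0$ and $S_1$ are disjoint and $J_0=1-J_+-J_-$ by definition, the completeness, orthogonality and commutation statements of (\ref{relation rr1}) are immediate. For (\ref{relation rr2}) and its $F$-analogue I will compute the one-sided products term by term: pairing $\{D_\lambda\}$ with a summand $\{E_{h,h+1}+D_\mu\}$ of $E_h$ forces $\lambda=\mu+\underline h$ in the product $J_\pm E_h$ and $\lambda=\mu+\underline{h+1}$ in the product $E_h J_\pm$. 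Because $J_\pm$ is supported on $\lambda_{m+1}=0$, the $(m+1)$-st component then requires $\mu_{m+1}=-1$ precisely for $J_\pm E_{m+1}$ and for $E_m J_\pm$, which is impossible in $\wt{\Theta}'$; hence $E_m J_\pm=0$ and $J_\pm E_{m+1}=0$, and symmetrically $F_{m+1}J_\pm=0$ and $J_\pm F_m=0$. For $i\notin\{m,m+1\}$ the shift $\underline i-\underline{i+1}$ fixes $\lambda_{m+1}$ and preserves the parity of $\sum_{k=m+1}^{n}\lambda_k$, so $J_\pm$ commutes with $E_i$ and $F_i$; these are exactly the assertions encoded by the Kronecker factors in (\ref{relation rr2}).

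For (\ref{relation rr4}) and its $F$-analogue I will use that the composite $E_m E_{m+1}$ shifts the diagonal by $\underline m-\underline{m+2}$: this returns the $(m+1)$-st coordinate to $0$, so the product stays on the common support of the $J_\alpha$, while it changes $\sum_{k=m+1}^{n}\lambda_k$ by $-1$, flipping the parity and thereby interchanging $J_\pm$ with $J_\mp$. The delicate identities (\ref{relation rr6}) and its companion are where I expect the main obstacle, since they fuse the truncated telescoping commutator with this parity interchange at the boundary index. Concretely, I will expand $J_\pm E_m F_m$ and $E_m F_m J_\mp$ into diagonal and off-diagonal symbols, check that the off-diagonal contributions coincide in the two products and so cancel in the difference (as they do in the geometric computation of Proposition \ref{prop 5}), and verify that the surviving diagonal part is $\frac{0(\underline m-\underline{m+1})-0(\underline{m+1}-\underline m)}{v-v^{-1}}$ weighted by the signed idempotent $J_\pm-J_\mp$, the sign arising from the parity count of $\sum_{k=m+1}^{n}\lambda_k$; the computation at the index $m+1$ gives the last relation. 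Keeping the normalization $(v-v^{-1})^{-1}$ and the sign of $J_\pm-J_\mp$ correctly aligned through the truncation is the only step demanding care beyond the $\mcal U$ case. As a global consistency check I will match all of (\ref{relation rr1})--(\ref{relation rr6}) against the geometric relations already proved in Proposition \ref{prop 5}, of which they are the stabilized image under Proposition \ref{prop5''}.
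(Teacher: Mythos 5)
You are reconstructing a proof the paper never gives: the proposition for $\mcal U''$ is stated bare, so the only benchmark is the computations the paper does perform, namely the proof of the analogous proposition for $\mcal U$ in Section \ref{sec6} and the geometric Proposition \ref{prop 5}. Your core mechanism is the right one and matches that intended route: the forced index shifts ($\lambda=\mu+\underline h$ for $J_\pm E_h$ versus $\lambda=\mu+\underline{h+1}$ for $E_hJ_\pm$), the resulting vanishings $E_mJ_\pm=0$, $J_\pm E_{m+1}=0$, $F_{m+1}J_\pm=0$, $J_\pm F_m=0$, the commutation for $i\notin\{m,m+1\}$, and the parity flip under the shift $\underline m-\underline{m+2}$ giving (\ref{relation rr4}) and its $F$-analogue are all correct. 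However, your reduction of (\ref{eqr60})--(\ref{rA-iv}) to a ``word-for-word copy'' of the $\mcal U$ computation is not accurate: in $\mcal K''$ the multiplication rules carry the extra constraint $t_{m+1}\le a_{m+1,m+1}$, invisible in $\mcal K$, and this alters the computation not only at the excluded value $h=m+1$ but also at $h=m$, which the proposition does assert. In $t(E_mF_m-F_mE_m)$ the diagonal symbols $\{D_\mu\}$ with $\mu_{m+1}=0$ receive a contribution from $E_mF_m$ only, their $F_mE_m$ partners being truncated away, so the cancellation pattern of the $\mcal U$ proof must be redone at this boundary; it does close, but by a new check, not by citation, and the same remark applies to the Serre relations (\ref{rA-iv}) for $i\in\{m-1,m,m+1\}$.

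The decisive gap is the step you defer: (\ref{relation rr6}) and its $F_{m+1}E_{m+1}$ companion cannot be verified, because as printed they are inconsistent with the other relations of the proposition. Multiply (\ref{rA-iii}) at $h=m$ (which carries the factor $t$) by $J_\pm$ on the left and by $J_\mp$ on the right; using $J_\pm F_m=0$ and $E_mJ_\mp=0$ from (\ref{relation rr2}) and its $F$-analogue, together with the commutation of $J_\alpha$ with the diagonal elements $A_a=0(\underline a)$, $B_a=0(-\underline a)$, one obtains
\[
t\bigl(J_\pm E_mF_m-E_mF_mJ_\mp\bigr)=\frac{A_mB_{m+1}-B_mA_{m+1}}{v-v^{-1}}\,(J_\pm-J_\mp),
\]
with a factor $t$ that (\ref{relation rr6}) lacks. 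Since the right-hand side is visibly nonzero in $\hat{\mcal K''}$ and $t\neq 1$ in $\mbb Q(v,t)$, the printed (\ref{relation rr6}) cannot hold alongside the others; your own expansion will exhibit the discrepancy as $t^{\mu_m-1}$ on the left against $t^{\mu_m}$ on the right, and the same mismatch occurs in the last relation (there the shortcut through (\ref{rA-iii}) is unavailable since $h=m+1$ is excluded, but direct computation gives the same extra $t$). So the step you call ``the only step demanding care'' is not delicate but impossible as stated: a correct write-up must restore the factor $t$ in these two relations, exactly parallel to (\ref{rA-iii}) and consistent with the rescaling $E_i\mapsto tE_i$ in the isomorphism between $U_{v,t}(gl_n)$ and $\mcal U$. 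Relatedly, your claim that your vanishing statements are ``exactly'' what the Kronecker factors in (\ref{relation rr2}) encode is too generous to the text: read literally, the two equations of (\ref{relation rr2}) at $i=m$ force $J_\pm E_m=E_mJ_\pm=0$, contradicting your own correct observation that $J_\pm E_m\neq 0$; the one-sided reading you adopt is the only tenable one, and is what your argument actually proves.
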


\subsection{The algebra $\widehat{U_{v, t}(gl_N)^m}$}

\begin{Def}
$\widehat{U_{v, t}(gl_n)}$ is an associative $\mathbb{Q}(v, t)$-algebra with 1 generated by symbols $E_i,  F_i,  ,  J_{\alpha}, A_a, B_a$ for all $i\in [1, n-1]$, $a \in \mbb [1, n]$ and $\alpha \in \{+, -, 0\}$ and subject to the following relations.
 \allowdisplaybreaks
\begin{eqnarray}
&&J_+ +  J_0 +J_- = 1, J_{\alpha}J_{\beta} = \delta_{\alpha, \beta}J_{\alpha},  J_{\alpha}A_{a} = A_{a}J_{\alpha}, J_{\alpha}B_{a} = B_{a}J_{\alpha};\\
&&E_iJ_{\pm} = (1 - \delta_{i, m})J_{\pm}E_i,  J_{\pm}E_i = (1 - \delta_{i, m+1})E_iJ_{\pm};\\
&&F_iJ_{\pm} = (1 - \delta_{i, m+1})J_{\pm}F_i,  J_{\pm}F_i = (1 - \delta_{i, m})F_iJ_{\pm};\\
 &&J_{\pm}E_mE_{m+1} =E_mE_{m+1}J_{\mp};\\
&& J_{\pm}F_{m+1}F_{m} =F_{m+1}F_{m}J_{\mp};\\
&& J_{\pm}E_{m}F_{m} - E_{m}F_{m}J_{\mp} = \frac{A_mB_{m+1}-B_mA_{m+1}}{v-v^{-1}}(J_{\pm} - J_{\mp});\\
&& J_{\pm}F_{m+1}E_{m+1} - F_{m+1}E_{m+1}J_{\mp} = \frac{B_{m+1}A_{m+2}-A_{m+1}B_{m+2}}{v-v^{-1}}(J_{\pm} - J_{\mp}). \\
& & A_i^{\pm 1}A^{\pm 1}_j=A^{\pm 1}_jA_i^{\pm 1}, \ \ B^{\pm 1}_iB^{\pm 1}_j=B^{\pm 1}_jB^{\pm 1}_i, \\
     & & A_i^{\pm 1}B^{\pm 1}_j=B^{\pm 1}_jA_i^{\pm 1}, \ \ A_i^{\pm 1}A_i^{\mp 1}=1=B^{\pm 1}_iB^{\mp 1}_i. \\
  & &A_iE_jA^{-1}_i=v^{\langle i, j\rangle}t^{\langle i, j\rangle}E_j, \ \ B_iE_jB^{-1}_i=v^{-\langle i, j\rangle}t^{\langle i, j\rangle}E_j, \\
     & &A_iF_jA^{-1}_i=v^{-\langle i, j\rangle}t^{- \langle j, i\rangle}F_j, \ \ B_iF_jB^{-1}_i=v^{\langle i, j\rangle}t^{-\langle j, i\rangle}F_j. \\
  & &E_iF_j-F_j E_i=\delta_{ij}\frac{A_iB_{i+1}-B_iA_{i+1}}{v-v^{-1}}. \\
  & &E_i^2E_{i+1} - (vt + v^{-1}t)E_iE_{i+1}E_{i} + t^{2}E_{i+1}E_{i}^{2} = 0, \\
  & &t^{2}E_{i+1}^2E_{i} - (vt + v^{-1}t)E_{i+1}E_{i}E_{i+1} + E_{i}E_{i+1}^{2} = 0, \\
  & &F_i^2F_{i+1} - (vt^{-1} + v^{-1}t^{-1})F_iF_{i+1}F_{i} + t^{-2}F_{i+1}F_{i}^{2} = 0, \\
  & &t^{-2}F_{i+1}^2F_{i} - (vt^{-1} + v^{-1}t^{-1})F_{i+1}F_{i}F_{i+1} + F_{i}F_{i+1}^{2} = 0.
\end{eqnarray}
\end{Def}

\newpage
\noindent{Acknowledgements: This work is supported by NSFC 11571119 and NSFC 11475178.}

\begin{thebibliography}{99999}\frenchspacing

\bibitem[BKLW14]{BKLW13}  H.  Bao,  J.  Kujawa,  Y.  Li,   W.  Wang,  {\em Geometric Schur duality of classical type,  with Appendix A by H.  Bao,  Y.  Li,  and W.  Wang},   arXiv:1404. 4000.

\bibitem[BW13]{BW} H.  Bao,  W.  Wang,   {\em A new approach to Kazhdan-Lusztig theory of type B via quantum symmetric pairs},
               arXiv:1310. 0103.

\bibitem[BBD82]{BBD82} A.   Beilinson,  J.  Bernstein,  P.  Deligne,
         {\em Faisceaux pervers},
         Ast\'{e}risque {\bf 100} (1982).

\bibitem[BLM90]{BLM90} A.  Beilinson,  G.  Lusztig,  R.  McPherson,
          {\em A geometric setting for the quantum deformation of $GL_n$},  Duke Math.  J. ,  {\bf 61} (1990),  655-677.

\bibitem[BB05]{BB05} A.  Bj\"{o}rner,  F.  Brenti,  {\em Combinatorics of Coxeter groups},  Graduate Texts in Mathematics,  {\bf 231. } Springer,  2005.


\bibitem[D02]{D02} S.  Doty,  A.  Giaquinto,
{\em Presenting Schur algebras},
 International Mathematics Research Notices,
{\bf 36}, 1907--1944 (2002).

\bibitem[ES13a]{ES13a} M.  Ehrig,  C.  Stroppel,
{\em Diagrams for perverse sheaves on isotropic Grassmannians and the supergroup $SOSP(m|2n)$},  arXiv:1306. 4043.

\bibitem[ES13b]{ES13b} M.  Ehrig,  C.  Stroppel,
{\em Nazarov-Wenzl algebras,  coideal subalgebras and categorified skew Howe duality},  	 arXiv:1310. 1972.

\bibitem[FL13]{FL13} Z.  Fan,  Y.  Li,  {\em Two-parameter quantum algebras,  canonical bases and categorifications},  arXiv:1303. 2429v2.

\bibitem[FL14]{FL14} Z.  Fan,  Y.  Li,  {\em Geometry Shur duality of classical type, II},  arXiv:1408. 6740v1.

\bibitem[Fu12]{F12} Q.  Fu,
                {\em BLM realization for ${\mathcal U}_{\mbb Z} (\widehat{\mathfrak{gl}}_n)$},  arXiv:1204. 3142.


\bibitem[GV93]{GV93} V.  Ginzburg and E.  Vasserot,
                                     {\em Langlands reciprocity for affine quantum groups of type $A_n$},
                                     Internat.  Math.  Res.  Notices {\bf 3} (1993),  67--85.

\bibitem[G97]{G97} R.   Green,  {\em Hyperoctaheral Schur algebras},  J.  Algebra  {\bf 192},  (1997) 418-438.

\bibitem[GL92]{GL92} I.  Grojnowski,  G.  Lusztig,
          {\em  On bases of irreducible representations of quantum $GL_n$},   in  {\em Kazhdan-Lusztig theory and related topics}
          (Chicago,  IL,  1989),  167-174,  Contemp.  Math. ,  {\bf 139},  Amer.  Math.  Soc. ,  Providence,  RI,  1992.

\bibitem[KL79]{KL79} D.  Kazhdan,  G.   Lusztig,
           {\em Representations of Coxeter groups and Hecke algebras},
           Invent.  Math.  {\bf 53} (1979),  no.  2,  165-184.

\bibitem[KhLa10]{KL10} M.  Khovanov,  A.  Lauda,
          {\em  A diagrammatic approach to categorification of quantum groups III},  Quantum Topology,  Vol 1,  Issue 1,  2010,  pp.  1-92.

\bibitem[La10]{L10} A.  Lauda,   {\em A categorification of quantum sl(2)},   Adv.  in Math. ,  Volume 225,  Issue 6,  2010,  3327-3424.

\bibitem[Le02]{Le02}
G.  Letzter,  {\em Coideal subalgebras and quantum symmetric pairs},
New directions in Hopf algebras (Cambridge),  MSRI publications,  vol.  {\bf 43},  Cambridge Univ.  Press,  2002,  pp.  117--166.

\bibitem[Lu93]{Lu93} G.  Lusztig,  {\em Introduction to Quantum groups},  Modern Birkh{\"a}user Classics,
Reprint of the 1993 Edition,  Birkh\"{a}user,  Boston,  2010.

\bibitem[Lu99]{L99} G.  Lusztig,
          {\em Aperiodicity in quantum affine $\mathfrak{gl}_n$},  Asian J.  Math.  {\bf 3} (1999),  147--177.

\bibitem[Lu00]{L00} G.  Lusztig,  {\em Transfer maps for quantum affine $\mathfrak{sl}_n$},
               in Representations and quantizations (Shanghai,  1998),  341-356,  China High.  Educ.  Press,  Beijing,  2000.


\bibitem[M10]{M10} K.  McGerty,  {\em On the geometric realization of the inner product and canonical basis for quantum affine $\mfk{sl}_n$},   Algebra Number Theory 6 (2012),  no.  6,  1097-1131.

\bibitem[P09]{P09} G.  Pouchin,
         {\em A geometric Schur-Weyl duality for quotients of affine Hecke algebras, }
        J.  Algebra {\bf  321}  (2009),  no.  1,  230-247.

\bibitem[SV00]{SV00}  O.  Schiffmann and E.  Vasserot,
         {\em Geometric construction of the global base of the quantum modified algebra of $\widehat{ \mathfrak{gl}}_n$},
         Transform.  Groups {\bf 5} (2000),  351--360.
\bibitem[W46]{W46} H.  Weyl,  {\em The classical groups; their invariants and representations}. Princeton Univ.  Press,  Princeton,  1946.
\bibitem[W93]{W93} Z.  Wan,
  {\em Geometry of classical groups over finite fields}.  Studentlitteratur,  1993.




\end{thebibliography}
\end{document}